\newtheorem{assumption}{Assumption}
\begin{document}
	
	\title{Zeroth-Order Stochastic Mirror Descent Algorithms for Minimax Excess Risk Optimization\footnotemark[1]}
	
	\author{\name Zhi-Hao Gu \email Wayne02710@163.com \\
		\addr Department of Mathematics\\
		Shanghai University\\
		Shanghai 200444, People's Republic of China
		\AND
		\name Zi Xu\footnotemark[2] \email xuzi@shu.edu.cn \\
		\addr Department of Mathematics\\
		Shanghai University\\
		Shanghai 200444, People's Republic of China\\
		and\\
		Visiting Scholar at Center for Intelligent Computing\\
		Great Bay Institute for Advanced Study\\
		Dongguan, 523000, People's Republic of China}
	\editor{}
	\maketitle
	\renewcommand{\thefootnote}{\fnsymbol{footnote}}
	\footnotetext[1]{This work is supported by National Natural Science Foundation of China under the grants 12071279. }
	\footnotetext[2]{Corresponding author.}
	
	\begin{abstract}
		The minimax excess risk optimization (MERO) problem is a new variation of the traditional distributionally robust optimization (DRO) problem, which achieves uniformly low regret across all test distributions under suitable conditions. In this paper, we propose a zeroth-order stochastic mirror descent (ZO-SMD) algorithm available for both smooth and non-smooth MERO to estimate the minimal risk of each distrbution, and finally solve MERO as (non-)smooth stochastic convex-concave (linear) minimax optimization problems. The proposed algorithm is proved to converge at optimal convergence rates of $\mathcal{O}\left(1/\sqrt{t}\right)$ on the estimate of $R_i^*$ and $\mathcal{O}\left(1/\sqrt{t}\right)$ on the optimization error of both smooth and non-smooth MERO. Numerical results show the efficiency of the proposed algorithm.
	\end{abstract}
	
	\section{Introduction}\label{1}
	With the increasing integration of machine learning into diverse applications, it's frequent to encounter scenarios where the distribution of test data diverges from that of the training data, which leads to marked declines in model efficacy \citep{sugiyama2007covariate}. To counter this issue, Distributionally Robust Optimization (DRO) has been developed as a method to identify models that optimize the worst-case risk across a range of possible distributions. Typically, DRO is articulated as a minimax optimization problem, where the objective is to minimize the maximum potential risk:
	\begin{equation}\label{dro}
		\min_{\mathbf{w} \in \mathcal{W}} \max_{\mathcal{P} \in \mathcal{S}} \mathbb{E}_{\mathbf{z} \sim \mathcal{P}}\left[\ell(\mathbf{w};\mathbf{z})\right],
	\end{equation}
	where $\mathcal{W}$ is a hypothesis class, $\mathcal{S}$ is a set of distributions, and $\ell(\textbf{w}; \textbf{z})$ denotes a loss function that measures the performance, where $\mathbf{w} \in \mathcal{W}$ denotes the $d$-dimentional weight vector and $\mathbf{z} \in \mathcal{P}$ denotes a random sample drawn from a certain distribution $\mathcal{P} \in \mathcal{S}$. When $\mathcal{S}$ contains a finite
	number of distributions, \eqref{dro} is referred to as Group DRO (GDRO) \citep{sagawa2019distributionally}.
	
	While the minimax approach bolsters a model's robustness against shifts in data distribution, it may inadvertently heighten the model's sensitivity to disparate noise. This is particularly pronounced in GDRO, where the considered distributions may display marked variance. The underlying issue is evident: the presence of a single noisy distribution can disproportionately influence the maximization in the formulation, overshadowing the contribution of other, less noisy distributions. To address this concern, Agarwal and Zhang (\citet{agarwal2022minimax}) introduced an alternative called Minimax Regret Optimization (MRO). This technique refines the standard risk metric, by focusing on the excess risk:
	\begin{equation}
		\mathbb{E}_{\mathbf{z} \sim \mathcal{P}}\left[\ell(\mathbf{w}; \mathbf{z})\right]-\min_{\mathbf{w} \in \mathcal{W}} \mathbb{E}_{\mathbf{z} \sim \mathcal{P}}\left[\ell(\mathbf{w}; \mathbf{z})\right]. \nonumber 
	\end{equation}
	
	Specifically, they examine the context of GDRO and aim to reduce the highest possible excess risk across a collection of $m$ distributions, denoted as $\mathcal{P}_{1},\cdots,\mathcal{P}_{m}$:
	\begin{equation}\label{mero1}
		\min_{\mathbf{w} \in \mathcal{W}} \max_{i \in [m]} \Big\{\underbrace{\mathbb{E}_{\mathbf{z} \sim \mathcal{P}_{i}} \left[ \ell(\mathbf{w}; \mathbf{z}) \right]}_{:=R_{i}(\mathbf{w})}  - \underbrace{\min_{\mathbf{w} \in \mathcal{W}} \mathbb{E}_{\mathbf{z} \sim \mathcal{P}_{i}} \left[ \ell(\mathbf{w}; \mathbf{z}) \right]}_{:=R_{i}^{*}} \Big\},
	\end{equation}
	where we denote $R_{i}(\mathbf{w})=\mathbb{E}_{\mathbf{z} \sim \mathcal{P}_{i}}\left[ \ell(\mathbf{w}; \mathbf{z}) \right]$ and $R_{i}^{*}=\min_{\mathbf{w} \in \mathcal{W}} \mathbb{E}_{\mathbf{z} \sim \mathcal{P}_{i}} \left[ \ell(\mathbf{w}; \mathbf{z}) \right]$ for simplicity, and $i \in [m]$ denotes the performance of locating the highest excess risk under $\mathcal{P}_i$ among $m$ distributions.
	Under this setting, Zhang and Tu (\citet{zhang2023efficient}) evolved MRO into minimax excess risk optimization (MERO) to avoid confusion with the term “regret” commonly used in online learning \citep{cesa2006prediction}.
	
	MERO can be described as deducting each distribution's inherent level of difficulty i.e., the minimum risk $R_i^*$ from its corresponding risk, thereby normalizing the risks for better comparability. However, the actual value of $R_i^*$
	often remains elusive, which introduces considerable complexity to the optimization task outlined in \eqref{mero1}. In the previous work, Agarwal and Zhang (\citet{agarwal2022minimax}) tackled this by focusing on the empirical variant of MERO. Yet, the derived algorithm necessitates solving an empirical risk minimization problem during each iteration---a task that becomes exceedingly demanding under a large dataset. Pivoting from this groundwork, Zhang and Tu (\citet{zhang2023efficient}) proposed novel and practical stochastic approximation methods that directly engage with optimization \eqref{mero1}. Notably, this approach also obviates the necessity to reconcile empirical MERO discrepancies with its population counterpart. By leveraging stochastic approximation techniques \citep{nemirovski2009robust}, they reformulated \eqref{mero1} into a stochastic convex-concave saddle-point challenge:
	\begin{equation}\label{mero2}
		\min_{\mathbf{w} \in \mathcal{W}} \max_{\mathbf{q} \in \Delta_{m}} \phi(\mathbf{w},\mathbf{q}):=\sum_{i=1}^{m} q_i\left[R_{i}(\mathbf{w}) - R_{i}^{*}\right],
	\end{equation}
	where $\Delta_{m}=\left\{\mathbf{q} \in \mathbb{R}_{m}:\mathbf{q}\geq0, \sum_{i=1}^{m}q_i=1\right\}$ is the $(m-1)$-dimensional simplex.
	\subsection{Related Works}\label{1.1}
	\noindent For the DRO problem mentioned in \eqref{dro}, the uncertainty set $\mathcal{S}$ is typically chosen to be a set of distributions neighboring a target distribution $\mathcal{P}_{0}$, and constructed by using a specific distance function between distributions, such as the $f$-divergence \citep{ben2013robust} or Wasserstein distance \citep{mohajerin2018data,kuhn2019wasserstein}. The nature of the loss function also exhibits variability: it can be assumed to have a convex structure \citep{ben2015oracle,shapiro2017distributionally}, a nonconvex structure \citep{jin2021non,qi2021online}, or may include regularization terms \citep{sinha2017certifying}, among others. Research efforts also focus on a variety of goals, such as developing optimization algorithms \citep{namkoong2016stochastic,levy2020large,rafique2022weakly}, investigating the finite sample sizes and asymptotic behaviors of empirical solutions \citep{duchi2019variance,duchi2021learning}, establishing confidence intervals for risk assessments \citep{duchi2021statistics}, or approximating nonparametric likelihoods \citep{nguyen2019optimistic}.
	
	If the distribution set $\mathcal{S}$ is finite, the traditional DRO problem evolves into GDRO, which can be written in the form of the following stochastic convex-concave problem:
	\begin{equation}\label{gdro}
		\min_{\mathbf{w} \in \mathcal{W}} \max_{\mathbf{q} \in \Delta_{m}} \phi(\mathbf{w},\mathbf{q}) = \sum_{i=1}^{m} q_i R_{i}(\mathbf{w}).
	\end{equation}
	By adopting the method of taking a random sample from each sample in every round, Sagawa et al. (\citet{sagawa2019distributionally}) applied the stochastic mirror descent (SMD) algorithm to \eqref{gdro} and achieved a convergence rate of $\mathcal{O}\left(m\sqrt{(\log m)/t}\right)$, where $t$ is the number of iteration, and ultimately obtaining an $\epsilon$-optimal solution with a sample complexity of $\mathcal{O}\left(m\sqrt{(\log m)/\epsilon^2}\right)$.
	Recently, Zhang and Tu (\citet{zhang2023efficient}) proposed the MERO problem as formulated in \eqref{mero2}, which is also the focus of this paper. They present the multi-stage stochastic approximation method and anytime stochastic approximation method to solve MERO and achieved an iteration complexity of $\mathcal{O}\left((\log^2 t + \log^{1/2}m \log^{3/2}t)/\sqrt{t}\right)$, where $t$ is the number of iteration.
	
	The mirror descent method \citep{censor1992proximal,beck2003mirror} serves as an effective optimization technique within the realm of machine learning due to its capability to adapt to the geometric structure of optimization challenges through the judicious selection of appropriate Bregman functions \citep{bregman1967relaxation,censor1981iterative}. Recently, Li et al. (\citet{li2020variance}) proposed a variance-reduced adaptive stochastic mirror descent algorithm to tackle nonsmooth and nonconvex finite-sum optimization problems. Moreover, the application of the mirror descent method has further been extended to address minimax optimization challenges. For example, Babanezhad and Lacoste-Julien (\citet{babanezhad2020geometry}) proposed an algorithm rooted in mirror descent principles for tackling convex-concave minimax scenarios. Rafique et al. (\citet{rafique2022weakly}) designed a series of mirror descent strategies suitable for minimax optimization in weakly convex contexts. Additionally, a novel approach based on mirror descent principles \citep{mertikopoulos2018optimistic} has emerged for addressing certain nonconvex-nonconcave minimax issues characterized by non-monotonic variational inequalities. More recently, Paul et al. (\citet{paul2023robust}) proposed almost sure convergence of zeroth-order mirror descent algorithm and can find an $\epsilon$-stationary point with the total complexity of $\mathcal{O}(\epsilon^{-2})$. Zhang and Tu (\citet{zhang2023efficient}) proposed a stochastic mirror descent algorithm to solve a class of DRO problems (MERO). 
	
	In addition to first-order algorithms to solve DRO problems, there are some existing works that focus on zeroth-order algorithms for solving minimax optimization problems. For example, under convex-concave setting, Maheshwari et al. (\citet{maheshwari2022zeroth}) proposed a gradient descent-ascent algorithm with random reshuffling, i.e., OGDA-RR and obtained an $\epsilon$-stationary point with the total complexity of $\mathcal{O}(\epsilon^{-4})$. Sadiev et al. (\citet{sadiev2021zeroth}) proposed ZO-ESVIA algorithm and ZO-SCESVIA algorithm, which can find an $\epsilon$-stationary point with iteration complexity of $\mathcal{O}(\epsilon^{-2})$ and $\tilde{\mathcal{O}}(\epsilon^{-1})$ respectively. Under nonconvex-concave setting, Xu et al. (\citet{xu2021derivative}) proposed the ZO-AGP algorithm and its iteration complexity of obtaining an $\epsilon$-staitionary point is bounded by $\mathcal{O}(\epsilon^{-4})$. Under nonconvex-strongly concave setting, Wang et al. (\citet{wang2023zeroth}) proposed two single-loop algorithms, i.e., ZO-GDA and ZO-GDMSA (resp. ZO-SGDA and ZO-SGDMSA), and the total number of calls
	of the zeroth-order oracle to obtain an $\epsilon$-stationary point is bounded by $\mathcal{O}\left(\kappa^5(d_1+d_2)\epsilon^{-2}\right)$(resp. $\mathcal{O}\left(\kappa^5(d_1+d_2)\epsilon^{-4}\right)$ and $\mathcal{O}\left(\kappa(d_1+\kappa d_2 \log (\epsilon^{-1}))\epsilon^{-2}\right)$(resp. $\mathcal{O}\left(\kappa(d_1+\kappa d_2 \log (\epsilon^{-1}))\epsilon^{-4}\right)$ respectively, where $\kappa$ is the condition number. Liu et al. (\citet{liu2019min,liu2020min}) proposed an alternating projected stochastic gradient descent-ascent method called ZO-Min-Max, which can find an $\epsilon$-stationary point with the total complexity of $\mathcal{O}((d_1+d_2)\epsilon^{-6})$. Under nonconvex-linear setting, Shen et al. (\citet{shen2023zeroth}) proposed a zeroth-order single-loop algorithm to find an $\epsilon$-first-order Nash equilibrium bounded by $\mathcal{O}(\epsilon^{-3})$ with the number
	of function value estimation per iteration being bounded by $\mathcal{O}(d_x\epsilon^{-2})$. 
	
	Finally, we give a brief introduction about non-smooth optimization related to zeroth-order gradient. Under stochastic convex setting, Nesterov and Spokoiny (\citet{nesterov2017random}) and Duchi et al. (\citet{duchi2015optimal}) proposed $\mathcal{R}\mathcal{S}_{\mu}$ algorithm and two-point gradient estimates algorithm based on mirror descent method which achieve an $\epsilon$-stationary point with the complexity of $\mathcal{O}\left(d^2\epsilon^{-2}\right)$ and $\mathcal{O}\left(d\log(d)\epsilon^{-2}\right)$ respectively. Rando et al. (\cite{rando2022stochastic}) proposed S-SZD (Structured Stochastic Zeroth order Descent) algorithm and attain a convergence rate on the function values of the form $\mathcal{O}\left(d/lt^{-c}\right)$ for every $c<1/2$, where $t$ is the number of iterations. Under stochastic non-convex setting, both Rando et al. (\citet{rando2022stochastic}) and Kozak et al. (\citet{kozak2023zeroth}) focused on objectives satisfying the Polyak-Łojasiewicz (PL) conditions. The former proved that S-SZD algorithm could obtain linear convergence and the latter proposed a zeroth order stochastic subspace algorithm and finally reached polynomial convergence and linear convergence by different choice of discretization parameter. Lin et al. (\citet{lin2022gradient}) proposed GFM and SGFM algorithms and achieve the same $(\delta,\epsilon)$-Goldstein stationary point at an expected convergence rate at $\mathcal{O}\left(d^{3/2}\delta^{-1}\epsilon^{-4}\right)$.
	
	\section{Preliminaries}\label{2}
	\subsection{Notations}\label{2.1}
	\noindent Throughout the paper, we use the following notations. $\langle x,y \rangle$ denotes the inner product of two vectors of $x$ and $y$. $\|\cdot\|$ denotes the norm of a vector. We use $\mathbb{R}^{d}$ to denote the space of $d$ dimension real valued vectors. $\mathbb{E}_u[\cdot]$ means the expectation over the random vector $u$, $\mathbb{E}_{(u,\xi)}[\cdot]$ means the joint expectation over the random vector $u$ and the random
	variable $\xi$, and $\mathbb{E}_{(U,\mathcal{B})}$ denotes the joint expectation over the set $U$ of random vectors and
	the set $\mathcal{B}$ of random variables $\{\xi_1,\cdots,\xi_b\}$.
	\subsection{Zeroth-Order Gradient Estimator}\label{2.2}
	\noindent We consider the general stochastic convex optimization problem: 
	\begin{equation}\label{min}
		\min_xf(x) := \mathbb{E}_{\xi \sim \mathcal{P}} \left[F(x ; \xi)\right]. \nonumber
	\end{equation}
	
	Suppose that the gradient information is not available directly,
	we first introduce the idea of uniform smoothing gradient estimator (UniGE) \citep{gao2018information} for smooth case, i.e., $F(x ; \xi)$ has Lipschitz continuous gradient. Specifically, given \(\mathcal{B}=\left\{\xi_1,\cdots,\xi_r\right\}\) drawn i.i.d. from an unknown distribution \(\mathcal{P}\), the UniGE of \(\nabla F(x ; \xi_i)\) is defined as
	\begin{equation}\label{smooth unige}
		\hat{\nabla} F(x ; \mathcal{B}) = \frac{1}{r} \sum_{i=1}^{r} \hat{\nabla} F(x ; \xi_i) = \frac{1}{r} \sum_{i=1}^{r} \frac{F(x + \mu u_i; \xi_i) - F(x; \xi_i)}{\mu / d} u_i ,
	\end{equation}
	where $u_i \in \mathbb{R}^{d}$ are random vectors generated from the uniform distribution over $d$-dimensional unit sphere, $\mu>0$ is a smoothing parameter. We also denote that
	\begin{equation}\label{smooth f}
		f_{\mu}(x) = \mathbb{E}_{(u,\xi)}\left[ F(x + \mu u; \xi)\right].
	\end{equation}
	Note that for any random variable $\xi$, we have $\mathbb{E}_{(u,\xi)}\left[\hat{\nabla}F(x; \xi)\right] = \nabla_x f_{\mu}(x)$ by Lemma 5 in \citep{ji2019improved}. Then, obviously we have $\mathbb{E}_{(U,\mathcal{\mathcal{B}})}\left[\hat{\nabla}F(x; \mathcal{B})\right] = \nabla f_{\mu}(x)$ where $U =
	\{u_1,\cdots,u_r\}$.
	
	Additionally, for non-smooth case, we introduce the non-smooth directional gradient estimator \citep{duchi2015optimal} as
	\begin{equation}\label{non-smooth unige}
		\hat{\nabla}_{\text{ns}} F(x; \mathcal{B}) = \frac{1}{r} \sum_{i=1}^{r} \hat{\nabla}_{\text{ns}} F(x; \xi_i) = \frac{1}{r} \sum_{i=1}^{r} \frac{F(x + \mu_1 u_i + \mu_2 v_i; \xi_i) - F(x + \mu_1 u_i; \xi_i)}{\mu_2} v_i ,
	\end{equation}
	where $u_i, v_i \in \mathbb{R}^{d}$ are random vectors generated from smoothing distributions which are one of the following pairs: (1) both distributions are standard normal in $\mathbb{R}^{d}$ with identity covariance, (2) both distributions are uniform on the $\ell_2$-ball of radius $\sqrt{d+2}$, (3) one distribution is uniform on the $\ell_2$-ball of radius $\sqrt{d+2}$, and the other distribution is uniform on the $\ell_2$-sphere of radius $\sqrt{d}$. $\mu_1, \mu_2 > 0$ are smoothing parameters. Similarly as we construct the Unige in the smooth case \eqref{smooth unige}, we introduce an additional step of randomization for the point at which we evaluate the function difference. Essentially, this randomness reduces the likelihood that the perturbation vector is near a point of non-smoothness, allowing us to apply results similar to those in the smooth case.
	\section{Zeroth-Order Stochastic Mirror Descent Algorithm}\label{3}
	\noindent The stochastic mirror descent (SMD) algorithm is a well-known method for solving minimax problems, especially for the distributionally robust optimization (DRO) problems. The exsiting SMD algorithm needs to know the gradient information of the objective function, however there are many cases where the gradient value of the objective function is hard to compute or entirely unknown. In this section, we propose a zeroth-order stochastic mirror descent (ZO-SMD) algorithm to tackle MERO problems under both smooth and non-smooth cases. 
	
	We first present the standard setup of the mirror descent \citep{nemirovski2009robust}. We endow the structural domain $ \mathcal{W} $ with a distance-generating function that is 1-strongly convex w.r.t. a specific norm $ \|\cdot\|_{w}$. Then, we define the Bregman distance associated with $\nu_{w}(\cdot)$ as:
	\begin{equation}\label{bregman}
		B_{w}(\mathbf{u},\mathbf{v})=\nu_{w}(\mathbf{u})-\left[\nu_{w}(\mathbf{v})+\left\langle \nabla \nu_{w}(\mathbf{v}), \mathbf{u}-\mathbf{v} \right\rangle\right]. \nonumber
	\end{equation}
	For the simplex \( \Delta_{m} \), we choose the entropy function \( \nu_{q}(\mathbf{q})=\sum_{i=1}^{m}q_{i}\ln q_{i} \), which is a distance-generating function that is 1-strongly convex w.r.t. the $l_{1}$ vector norm $ \|\cdot\|_{1}$. 
		It is easy to deduce that the Bregman distance is bounded w.r.t. $\mathbf{w}$, i.e.,
	\begin{equation}\label{B_w bound}
		\max_{\mathbf{w}\in \mathcal{W}}B_{w}(\mathbf{w},\mathbf{o}_{w})\leq D^{2}, 
	\end{equation}
	where $\mathbf{o}_{w}=\operatorname{argmin}_{\mathbf{w} \in \mathcal{W}}\nu_{w}(\mathbf{w})$.
	\noindent For the simplex $\Delta_{m}$, we have 
	\begin{equation}\label{B_q bound}
		\max_{\mathbf{q}\in\Delta_{m}}B_{q}(\mathbf{q},\mathbf{o}_{q})\leq \ln m,
	\end{equation} 
	where $\textbf{o}_{q}=\frac{1}{m}\mathbf{1}_{m}$ and $\mathbf{1}_{m}$ is the
	$m$-dimensional vector of all ones by Proposition 5.1 in \citep{beck2003mirror}.
	The benefit of this selection is underscored by \citep{nemirovski2009robust}, who applied this method to successfully resolve convex optimization problems on a simplex of several million dimensions, proving that such mirror descent algorithms are nearly "free" in terms of the approximation steps on the simplex when the entropy function is chosen as the distance-generating function. In a similar vein, the Bregman distance corresponding to $ v_{q}(\mathbf{q})$, denoted as $B_{q}(\cdot,\cdot)$, also known as the KL divergence, is used to measure the disparity between different probability distributions.
	\subsection{ZO-SMD for Smooth MERO}\label{3.1}
	In this section, we propose a ZO-SMD algorithm to solve smooth MERO problems and analyze its iteration complexity. 
	
	The main idea of the proposed ZO-SMD algorithm is as follows.
	Firstly, we estimate the value of $R_i^*$ in \eqref{mero2} by using the zeroth-order stochastic mirror descent (ZO-SMD) algorithm. Detailedly, in order to update $\mathbf{w}$ under the distribution $\mathcal{P}_i$ in the $t$-th round, i.e., $\mathbf{w}_t^{(i)}$, we replace the stochastic gradient $\nabla \ell(\mathbf{w}_t^{(i)}; \mathbf{z}_t^{(i)})$ in the SMD algorithm \citep{zhang2023efficient} with the zeroth-order gradient $\hat{\nabla} \ell(\mathbf{w}_t^{(i)}; \mathcal{P}_{i})$ to address scenarios where the loss function gradient is difficult to compute or entirely unknown. According to \eqref{smooth unige}, the zeroth-order gradient of $\ell(\mathbf{w};\mathbf{z})$ under the distribution $\mathcal{P}_{i}$ for all $i \in [m]$ w.r.t. $\mathbf{w}$ in the $t$-th round is defined as:
	\begin{equation}\label{smooth unige of loss function}
		\hat{\nabla} \ell(\mathbf{w}_t^{(i)}; \mathcal{P}_{i}) = \frac{1}{r} \sum_{j=1}^{r} \hat{\nabla} \ell(\mathbf{w}_t^{(i)}; \mathbf{z}_{t,i}^{(j)})= \frac{1}{r} \sum_{j=1}^{r} \frac{\ell(\mathbf{w}_t^{(i)} + \mu_t \mathbf{u}_j; \mathbf{z}_{t,i}^{(j)}) - \ell(\mathbf{w}_t^{(i)}; \mathbf{z}_{t,i}^{(j)})}{\mu_t/d} \mathbf{u}_j,
	\end{equation}
	where $\mathbf{u}_j \in \mathbb{R}^d$ is a random vector drawn from the uniform distribution on the $d$-dimensional unit sphere $B_w$, which is defined as $B_w = \{ \mathbf{u} \in \mathbb{R}^d : \|\mathbf{u}\|_2 \leq 1 \}$, and $\{\mu_t\}_{t=1}^{\infty}$ is a non-increasing sequence of positive
	smoothing parameters, $\mathbf{z}_{t,i}^{(1)}, \cdots, \mathbf{z}_{t,i}^{(r)}$ are $r$ random samples selected from $\mathcal{P}_i$ in the $t$-th round. Furthermore, due to the equivalence of norms in finite-dimensional vector spaces, without loss of generality we assume $\|\cdot\|_{w}\leq \tau_1\|\cdot\|_2$ and $\|\cdot\|_{w,*}\leq \tau_2\|\cdot\|_2$.
	
	The ZO-SMD algorithm using the following update to estimate $R_i^*$ for all $i \in [m]$ step by step:
	\begin{equation}\label{ZO-SMD for smooth min}
		\mathbf{w}_{t+1}^{(i)} = \underset{\mathbf{w} \in \mathcal{W}}{\operatorname{argmin}} \left\{ \eta_t^{(i)} \left\langle \hat{\nabla} \ell(\mathbf{w}_t^{(i)}; \mathcal{P}_{i}), \mathbf{w} - \mathbf{w}_t^{(i)} \right\rangle + B_{w}\left(\mathbf{w}, \mathbf{w}_t^{(i)}\right) \right\},
	\end{equation}
	where $\eta_t^{(i)} > 0$ is the step size. We use the partial averaging value $R_i(\bar{\mathbf{w}}_t^{(i)})$ as 
	as an approximate solution for $R_i^*=\min_{\mathbf{w} \in \mathcal{W}} R_i(\mathbf{w})$, where $\bar{\mathbf{w}}_t^{(i)}$ is defined as:
	\begin{equation}\label{weighted value for smooth min}
		\bar{\mathbf{w}}_t^{(i)} = \sum_{j=\lceil\frac{t}{2}\rceil}^{t} \frac{\eta_j^{(i)} \mathbf{w}_j^{(i)}}{\sum_{k=\lceil\frac{t}{2}\rceil}^{t} \eta_k^{(i)}},
	\end{equation}
	where $\lceil\cdot\rceil$ is the ceiling function. The reason why we do not choose the last iteration $\textbf{w}_t^{(i)}$ as the approximate solution is that this will lead to a more complicated analysis \citep{shamir2013stochastic,harvey2019tight,jain2019making}. For simplicity, we denote $\hat{\phi}(\textbf{w},\textbf{q})=\sum_{i=1}^{m}q_i\left[R_i(\textbf{w})-R_i(\bar{\textbf{w}}_t^{(i)})\right]$.
	
	We proceed to solve \eqref{mero2} by ZO-SMD. Let $\mathbf{w}_t$ and $\mathbf{q}_t$ be the solutions in the $t$-th round. Based on the mini-batch random samples $\mathbf{z}_{t,i}^{(1)}, \cdots, \mathbf{z}_{t,i}^{(r)}$ generated from $\mathcal{P}_i$ in the estimate of $R_i^*$, we define the zeroth-order gradient of $\phi(\mathbf{w},\mathbf{q})$ w.r.t. $\mathbf{w}$ in the $t$-th round as
	\begin{equation}\label{gradient_w for smooth minimax}
		\hat{\mathbf{g}}_{w}(\mathbf{w}_t, \mathbf{q}_t) = \sum_{i=1}^{m} q_{t,i} \hat{\nabla}\ell(\mathbf{w}_{t}; \mathcal{P}_i) = \sum_{i=1}^{m} q_{t,i}\frac{1}{r} \sum_{j=1}^{r} \frac{\ell(\mathbf{w}_t + \mu_t \mathbf{u}_j; \mathbf{z}_{t,i}^{(j)}) - \ell(\mathbf{w}_t; \mathbf{z}_{t,i}^{(j)})}{\mu_t/d} \mathbf{u}_j.
	\end{equation}
	Considering that $\phi(\mathbf{w},\mathbf{q})$ is linear w.r.t. $\mathbf{q}$, there is no need to construct a zeroth-order gradient w.r.t. $\mathbf{q}$. Thus, based on the mini-batch random samples $\mathbf{z}_{t,i}^{(1)}, \cdots, \mathbf{z}_{t,i}^{(r)}$ generated in the estimate of $R_i^*$, we maintain the stochastic gradient of $\hat{\phi}(\mathbf{w},\mathbf{q})$ w.r.t. $\mathbf{q}$ in the $t$-th round as:
	\begin{equation}\label{gradient_q for smooth minimax}
		\mathbf{g}_q(\mathbf{w}_t, \mathbf{q}_t) = \left[\frac{1}{r} \sum_{j=1}^{r}\left[\ell(\mathbf{w}_t; \mathbf{z}_{t,1}^{(j)}) - \ell(\bar{\mathbf{w}}_t^{(1)},\mathbf{z}_{t,1}^{(j)})\right], \cdots, \frac{1}{r} \sum_{j=1}^{r}\left[\ell(\mathbf{w}_t; \mathbf{z}_{t,m}^{(j)}) - \ell(\bar{\mathbf{w}}_t^{(m)},\mathbf{z}_{t,m}^{(j)})\right] \right]^\top.
	\end{equation}
	Obviously, $\mathbf{g}_q(\mathbf{w}_t, \mathbf{q}_t)$ is a biased estimator of the gradient of $\phi(\mathbf{w}_t,\mathbf{q}_t)$, i.e., $\nabla_q\phi(\mathbf{w}_t,\mathbf{q}_t)=\left[ R_1(\mathbf{w}_t) - R_1^*, \cdots, R_m(\mathbf{w}_t) - R_m^* \right]^\top$, and the bias determined by $R_i(\bar{\mathbf{w}}_t^{(1)})-R_i^*$ can be controlled according to Lemma 4 in \citep{zhang2023efficient}. Note that $\mathbf{g}_q(\mathbf{w}_t, \mathbf{q}_t)$ is a unbiased estimator of  $\nabla_q\hat{\phi}(\mathbf{w}_t,\mathbf{q}_t)$.
	
	Finally, equipped with the zeroth-order gradient and the stochastic gradient in \eqref{gradient_w for smooth minimax} and \eqref{gradient_q for smooth minimax}, we update $\mathbf{w}_t$ and $\mathbf{q}_t$ by
	\begin{equation}\label{ZO-SMD w for smooth minimax}
		\mathbf{w}_{t+1}=\underset{\mathbf{w} \in \mathcal{W}}{\operatorname{argmin}}\left\{\eta_{t}^{w}\left\langle \hat{\mathbf{g}}_{w}(\mathbf{w}_t,\mathbf{q}_t),\mathbf{w}-\mathbf{w}_{t}\right\rangle + B_w\left(\mathbf{w}, \mathbf{w}_{t}\right)\right\},
	\end{equation}
	\begin{equation}\label{ZO-SMD q for smooth minimax}
		\mathbf{q}_{t+1}=\underset{\mathbf{q} \in \Delta{m}}{\operatorname{argmin}}\left\{\eta_{t}^{q}\langle -\mathbf{g}_{q}(\mathbf{w}_t,\mathbf{q}_t),\mathbf{q}-\mathbf{q}_{t}\rangle + B_q(\mathbf{q}, \mathbf{q}_{t})\right\},
	\end{equation}
	where $\eta_{t}^{w} > 0$ and $\eta_{t}^{q} > 0$ are step sizes. The partial averaging value of iterates is also used, i.e.,
	\begin{equation}\label{weighted value for smooth minimax}
		\bar{\mathbf{w}}_t = \sum_{j=\lceil\frac{t}{2}\rceil}^{t}\frac{\eta_j^{w} \mathbf{w}_j}{\sum_{k=\lceil\frac{t}{2}\rceil}^{t} \eta_k^{w}},\quad \bar{\mathbf{q}}_t = \sum_{j=\lceil\frac{t}{2}\rceil}^{t}\frac{\eta_j^{q} \mathbf{q}_j}{\sum_{k=\lceil\frac{t}{2}\rceil}^{t} \eta_k^{q}}.
	\end{equation}
	The detailed algorithm is presented as in Algorithm \ref{alg:1}.  In following subsections, we analyze the iteration complexity of the zeroth-order stochasitc mirror descent (ZO-SMD) algorithm for solving smooth MERO. 
	\begin{algorithm}[H]
		\caption{Zeroth-Order Stochastic Mirror Descent Algorithm: Smooth Case}\label{alg:1}
		\begin{algorithmic}[1]
			\State Initialize $\mathbf{w}_1 = \textbf{w}_1^{(1)} =\cdots= \mathbf{w}_1^{(m)} = {\operatorname{argmin}}_{\mathbf{w}\in \mathcal{W}} v_w(\mathbf{w})$, and $\mathbf{q}_1 = \frac{1}{m}\mathbf{1}_m \in \mathbb{R}^m$.
			\For{$t = 1$ to $T$}
			\For{$\forall$ $i \in [m]$}
			\State  Draw $r$ random samples $ \mathbf{z}_{t,i}^{(1)},\cdots, \mathbf{z}_{t,i}^{(r)}$ from $\mathcal{P}_i$.
			\EndFor
			\For{$\forall$ $i \in [m]$}
			\State Calculate $\hat{\nabla} \ell (\mathbf{w}_t^{(i)}; \mathcal{P}_i)$ according to \eqref{smooth unige of loss function} and update $\mathbf{w}_t^{(i)}$ according to \eqref{ZO-SMD for smooth min}.
			\EndFor
			\For{$\forall$ $i \in [m]$}
			\State Calculate the weighted average $\bar{\mathbf{w}}_t^{(i)}$ in \eqref{weighted value for smooth min}.
			\EndFor
			\State Calculate $\hat{\mathbf{g}}_w(\mathbf{w}_t,\mathbf{q}_t)$ and $\mathbf{g}_q(\mathbf{w}_t,\mathbf{q}_t)$ according to \eqref{gradient_w for smooth minimax} and \eqref{gradient_q for smooth minimax}.
			\State Update $\mathbf{w}_t$ and $\mathbf{q}_t$ according to \eqref{ZO-SMD w for smooth minimax} and \eqref{ZO-SMD q for smooth minimax}.
			\State Calculate the weighted average $\bar{\mathbf{w}}_t$ and $\bar{\mathbf{q}}_t$ in \eqref{weighted value for smooth minimax}.
			\EndFor
		\end{algorithmic}
	\end{algorithm}
	\subsubsection{Technical Preparation}\label{3.1.1}
	\noindent  Firstly, we make some mild assumptions.
	\begin{assumption}\label{assum:1}
		All the risk functions $R_{1}(\mathbf{w}),\cdots,R_{m}(\mathbf{w})$ are convex and the domain $\mathcal{W}$ is convex and compact.
	\end{assumption}
	\begin{assumption}\label{assum:2}
		\textit{For all $i \in [m]$}, for any $\mathbf{w} \in \mathcal{W}$ and $\mathbf{z} \sim \mathcal{P}_{i}$, we have
		\begin{equation}\label{loss function bound}
			0 \leq \ell(\mathbf{w}; \mathbf{z}) \leq C. 
		\end{equation}
	\end{assumption}
	\begin{assumption}\label{assum:3}
		For all $i \in [m]$, $\ell(\mathbf{w};\mathbf{z})$ has Lipschitz continuous gradient, i.e., there exists a constant $L>0$ such that for any $\mathbf{w}_1, \mathbf{w}_2\in \mathcal{W}$ and $\mathbf{z} \sim \mathcal{P}_i$, we have
		\begin{equation}\label{L continuous gradient}
			\|\nabla \ell(\mathbf{w}_1;\mathbf{z})-\nabla \ell(\mathbf{w}_2;\mathbf{z})\|_{w,*} \leq L\|\mathbf{w}_1-\mathbf{w}_2\|_{w}, 
		\end{equation}
		and
		\begin{equation}\label{expected gradient bound}
			\|\nabla R_i(\mathbf{w})\|_{w,*}\leq G,
		\end{equation}
		where $\|\cdot\|_{w,*}$ is the dual norm of $\|\cdot\|_{w}$.
	\end{assumption}
	Note that by Lemma 7 in \citep{xu2023zeroth}, $R_i(\mathbf{w})$ also has Lipschitz continuous gradient with constant $L$.
	\begin{assumption}\label{assum:4}
		For all $i \in [m]$, the variance of the stochastic gradient is bounded, i.e., there exists a constant $\sigma>0$ such that for any $\mathbf{w}\in \mathcal{W}$ and $\mathbf{z} \sim \mathcal{P}_{i}$, we have
		\begin{equation}\label{variance bound}
			\mathbb{E}_{\mathbf{z}\sim \mathcal{P}_i}\left[\left\| \nabla \ell(\mathbf{w}; \mathbf{z}) - \nabla R_i(\mathbf{w}) \right\|_{w,*}^2\right]  \leq \sigma^2.
		\end{equation}
	\end{assumption}
	Given a solution $(\bar{\mathbf{w}},\bar{\mathbf{q}})$ to \eqref{mero2}, the optimization error is defined as:
	\begin{equation}
		\epsilon_{\phi}(\bar{\mathbf{w}},\bar{\mathbf{q}})=\max_{\mathbf{q}\in\Delta_{m}}\phi(\bar{\mathbf{w}},\mathbf{q})-\min_{\mathbf{w} \in\mathcal{W}}\phi(\mathbf{w},\bar{\mathbf{q}}), \nonumber
	\end{equation}
	which provides an upper bound for the excess risk on any distribution, since
	\begin{align}
		&\max_{i \in [m]}[R_{i}(\bar{\mathbf{w}}) - R_{i}^{*}] - \min_{\mathbf{w} \in \mathcal{W}} \max_{\mathbf{q} \in \Delta_{m}} \sum_{i=1}^{m} q_i[R_{i}(\mathbf{w}) - R_{i}^{*}] \nonumber \\
		\leq & \max_{i \in [m]}[R_{i}(\bar{\mathbf{w}}) - R_{i}^{*}] - \min_{\mathbf{w} \in \mathcal{W}}\sum_{i=1}^{m}\bar{q_i}[R_{i}(\mathbf{w}) -R_{i}^{*}]  = \epsilon_{\phi}(\bar{\mathbf{w}},\bar{\mathbf{q}}).\nonumber
	\end{align}
	
	\begin{lemma}\label{lem:1}
		If Assumptions \ref{assum:3} and \ref{assum:4} hold, for all $i \in [m]$ and any $\mathbf{w} \in \mathcal{W}$, we have
		\begin{equation}\label{lem1.1}
			\mathbb{E}_{(\mathbf{u},\mathbf{z})}\left[\hat{\nabla} \ell(\mathbf{w}; \mathbf{z})\right] = \nabla R_i\left(\mathbf{w}\right) + \frac{\tau_1^2\mu dL}{2}v(\mu,\mathbf{w}),
		\end{equation}
		where $v(\mu,\mathbf{w}) \in \mathbb{R}^d$ is an error vector with $\left\|v(\mu,\mathbf{w})\right\|_{w,*} \leq \tau_2$, and
		\begin{equation}\label{lem1.2}
			\mathbb{E}_{(\mathbf{u},\mathbf{z})} \left[\left\| \hat{\nabla} \ell(\mathbf{w}; \mathbf{z}) \right\|_{w,*}^2\right] \leq 4\tau_1^2\tau_2^2(G^2+\sigma^2)d^2 + \frac{\tau_1^4\tau_2^2\mu^2 d^2 L^2}{2}.
		\end{equation}
	\end{lemma}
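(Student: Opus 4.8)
The plan is to reduce both claims to a direct Taylor analysis of the single‑sample estimator $\hat{\nabla}\ell(\mathbf{w};\mathbf{z}) = \frac{d}{\mu}\left(\ell(\mathbf{w}+\mu\mathbf{u};\mathbf{z}) - \ell(\mathbf{w};\mathbf{z})\right)\mathbf{u}$, exploiting that $\mathbf{u}$ (uniform on the unit sphere, so $\|\mathbf{u}\|_2=1$) is independent of $\mathbf{z}\sim\mathcal{P}_i$, hence $\mathbb{E}_{\mathbf{z}}[\ell(\mathbf{w}+\mu\mathbf{u};\mathbf{z})\mid\mathbf{u}] = R_i(\mathbf{w}+\mu\mathbf{u})$, together with the structural facts $\mathbb{E}_{\mathbf{u}}[\mathbf{u}\mathbf{u}^\top] = \frac{1}{d}I$ and $\|\mathbf{u}\|_2 = 1$. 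The norm‑equivalence bounds $\|\cdot\|_w\leq\tau_1\|\cdot\|_2$ and $\|\cdot\|_{w,*}\leq\tau_2\|\cdot\|_2$ are used to pass between the Euclidean geometry (natural for the sphere) and the $\|\cdot\|_{w,*}$ geometry in which Assumptions \ref{assum:3} and \ref{assum:4} are stated.

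For \eqref{lem1.1}, I would first take the conditional expectation over $\mathbf{z}$ to obtain $\mathbb{E}_{(\mathbf{u},\mathbf{z})}[\hat{\nabla}\ell(\mathbf{w};\mathbf{z})] = \mathbb{E}_{\mathbf{u}}\left[\frac{d}{\mu}\left(R_i(\mathbf{w}+\mu\mathbf{u}) - R_i(\mathbf{w})\right)\mathbf{u}\right]$. Writing the increment in integral form $R_i(\mathbf{w}+\mu\mathbf{u}) - R_i(\mathbf{w}) = \mu\langle\nabla R_i(\mathbf{w}),\mathbf{u}\rangle + \int_0^1\langle\nabla R_i(\mathbf{w}+s\mu\mathbf{u}) - \nabla R_i(\mathbf{w}),\mu\mathbf{u}\rangle\,ds$ (valid for the merely $C^{1,1}$ function $R_i$, thus avoiding a Hessian), the leading term yields $\mathbb{E}_{\mathbf{u}}[d\langle\nabla R_i(\mathbf{w}),\mathbf{u}\rangle\mathbf{u}] = d\,\mathbb{E}_{\mathbf{u}}[\mathbf{u}\mathbf{u}^\top]\nabla R_i(\mathbf{w}) = \nabla R_i(\mathbf{w})$, and the remainder defines the error vector. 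Bounding the remainder by Cauchy--Schwarz in the pair $(\|\cdot\|_{w,*},\|\cdot\|_w)$, then the Lipschitz gradient estimate $\|\nabla R_i(\mathbf{w}+s\mu\mathbf{u}) - \nabla R_i(\mathbf{w})\|_{w,*}\leq Ls\mu\|\mathbf{u}\|_w\leq Ls\mu\tau_1$ together with $\|\mathbf{u}\|_w\leq\tau_1$ and $\|\mathbf{u}\|_2 = 1$, gives the Euclidean bound $\frac{\tau_1^2\mu d L}{2}$ after integrating $\int_0^1 s\,ds$; a final application of $\|\cdot\|_{w,*}\leq\tau_2\|\cdot\|_2$ shows the normalized error vector $v(\mu,\mathbf{w})$ satisfies $\|v(\mu,\mathbf{w})\|_{w,*}\leq\tau_2$.

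For \eqref{lem1.2}, I would start from $\|\hat{\nabla}\ell(\mathbf{w};\mathbf{z})\|_{w,*}^2\leq\tau_2^2\frac{d^2}{\mu^2}\left(\ell(\mathbf{w}+\mu\mathbf{u};\mathbf{z}) - \ell(\mathbf{w};\mathbf{z})\right)^2$, using $\|\mathbf{u}\|_{w,*}\leq\tau_2\|\mathbf{u}\|_2=\tau_2$. A first‑order Taylor expansion with Lipschitz remainder gives $|\ell(\mathbf{w}+\mu\mathbf{u};\mathbf{z}) - \ell(\mathbf{w};\mathbf{z})|\leq\mu|\langle\nabla\ell(\mathbf{w};\mathbf{z}),\mathbf{u}\rangle| + \frac{L\mu^2\tau_1^2}{2}$; squaring via $(a+b)^2\leq2a^2+2b^2$ and multiplying by $\tau_2^2 d^2/\mu^2$ isolates exactly the deterministic term $\frac{\tau_1^4\tau_2^2\mu^2 d^2 L^2}{2}$. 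For the stochastic term $2\tau_2^2 d^2\langle\nabla\ell(\mathbf{w};\mathbf{z}),\mathbf{u}\rangle^2$, I would bound $\langle\nabla\ell,\mathbf{u}\rangle^2\leq\tau_1^2\|\nabla\ell(\mathbf{w};\mathbf{z})\|_{w,*}^2$ by Cauchy--Schwarz and norm equivalence, then split $\|\nabla\ell\|_{w,*}^2\leq2\|\nabla\ell(\mathbf{w};\mathbf{z}) - \nabla R_i(\mathbf{w})\|_{w,*}^2 + 2\|\nabla R_i(\mathbf{w})\|_{w,*}^2$ and take $\mathbb{E}_{\mathbf{z}}$, invoking Assumption \ref{assum:4} (the first piece is $\leq\sigma^2$) and \eqref{expected gradient bound} (the second is $\leq G^2$) to obtain $4\tau_1^2\tau_2^2(G^2+\sigma^2)d^2$. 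Summing the two contributions yields \eqref{lem1.2}.

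The main obstacle is bookkeeping rather than conceptual: the $d$ and $d^2$ factors appear because the assumptions live in the $\|\cdot\|_{w,*}$ norm, so I deliberately bound $\langle\nabla\ell,\mathbf{u}\rangle$ through Cauchy--Schwarz and $\tau_1$, discarding the $\mathbb{E}_{\mathbf{u}}[\mathbf{u}\mathbf{u}^\top]=\frac{1}{d}I$ cancellation that would otherwise shave a factor of $d$, in order to express everything via $\tau_1,\tau_2$ and the given constants $G,\sigma,L$. Care is also needed to keep the remainder arguments valid under only a Lipschitz gradient (hence the integral and first‑order forms in place of a second‑order Taylor expansion), and to confirm the independence of $\mathbf{u}$ and $\mathbf{z}$ that justifies replacing $\ell$ by $R_i$ inside the first‑moment computation.
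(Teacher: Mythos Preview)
Your proposal is correct and follows essentially the same route as the paper: a first-order Taylor expansion with Lipschitz-gradient remainder, the identity $\mathbb{E}_{\mathbf u}[\mathbf u\mathbf u^{\top}]=\tfrac{1}{d}I$ for the bias term, the split $(a+b)^2\le 2a^2+2b^2$ for the second moment, and the decomposition $\mathbb{E}_{\mathbf z}\|\nabla\ell\|_{w,*}^2\le 2G^2+2\sigma^2$ from Assumptions~\ref{assum:3} and~\ref{assum:4}. The only cosmetic differences are that you take $\mathbb{E}_{\mathbf z}$ first (passing to $R_i$) before averaging over $\mathbf u$, and you use the integral remainder rather than the descent-lemma inequality; neither changes the argument or the constants.
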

	\begin{proof}
		Similar to the proof of Lemma 1 in \citep{duchi2015optimal}, by \eqref{L continuous gradient} in Assumption \ref{assum:3}, we obtain
		\begin{equation}
			\frac{\ell(\mathbf{w}+\mu \mathbf{u}; \mathbf{z}) - \ell(\mathbf{w};\mathbf{z})}{\mu/d} \leq \frac{\langle \nabla \ell(\mathbf{w}; \mathbf{z}), \mu \mathbf{u} \rangle + \frac{L}{2} \|\mu \mathbf{u}\|_w^2}{\mu/d} = d\langle \nabla \ell(\mathbf{w}; \mathbf{z}),  \mathbf{u} \rangle + \frac{\tau_1^2\mu dL}{2}. \nonumber
		\end{equation}
		Thus, we have
		\begin{equation}
			\mathbb{E}_{\mathbf{u} \sim U_{B_w}}\left[\frac{\ell(\mathbf{w}+\mu \mathbf{u}; \mathbf{z}) - \ell(\mathbf{w};\mathbf{z})}{\mu/d}\mathbf{u}\right] = \mathbb{E}_{\mathbf{u} \sim U_{B_w}}\left[d\langle \nabla \ell(\mathbf{w}; \mathbf{z}),  \mathbf{u} \rangle\mathbf{u} + \frac{\tau_1^2\mu dL}{2} \gamma(\mu,\mathbf{w},\mathbf{u})\mathbf{u}\right], \nonumber
		\end{equation}
		where $\gamma(\mu,\mathbf{w},\mathbf{u})$ is some function with range contained in $[0,1]$. Since $\mathbf{u} \in \mathbb{R}^d$ is a random vector drawn from the uniform distribution on the $d$-dimensional unit sphere $B_w$, i.e., $\mathbb{E}[\mathbf{u}\mathbf{u}^\top]=\frac{1}{d}I_{d \times d}$, by the definition of $\hat{\nabla} \ell(\mathbf{w}; \mathbf{z})$ in \eqref{smooth unige of loss function}, we have
		\begin{equation}
			\mathbb{E}_{\mathbf{u} \sim U_{B_w}}\left[\hat{\nabla} \ell(\mathbf{w}; \mathbf{z})\right] = \nabla \ell\left(\mathbf{w}; \mathbf{z}\right) + \frac{\tau_1^2\mu dL}{2} v(\mu,\mathbf{w}), \nonumber
		\end{equation}
		where $v(\mu,\mathbf{w})=\mathbb{E}_{\mathbf{u} \sim U_{B_w}}\left[\gamma(\mu,\mathbf{w},\mathbf{u})\mathbf{u}\right] \in \mathbb{R}^d$, and $\left\|v(\mu,\mathbf{z})\right\|_{w,*} \leq \tau_2\mathbb{E}_{\mathbf{u} \sim U_{B_w}}\left[\|\gamma(\mu,\mathbf{w},\mathbf{u})\|_2\|\mathbf{u}\|_2\right]$ $\leq \tau_2$, which completes the proof of \eqref{lem1.1}. 
		
		Then, we estimate the bound of $\left\| \hat{\nabla} \ell(\mathbf{w}; \mathbf{z})\right\|_{w,*}^2$. By Assumption \ref{assum:3} and similar to the proof of Proposition 6.6 in \citep{gao2018information}, we have
		\begin{align}
			&\mathbb{E}_{\mathbf{u} \sim U_{B_w}} \left[ \left\| \hat{\nabla} \ell(\mathbf{w}; \mathbf{z})\right\|_{w,*}^2 \right] \nonumber \\
			=& \frac{1}{\alpha(d)} \int_{B_w} \left\| \frac{d}{\mu} [\ell(\mathbf{w} + \mu \mathbf{u};\mathbf{z}) - \ell(\mathbf{w};\mathbf{z})]\mathbf{u} \right\|_{w,*}^2d\mathbf{u} \nonumber\\
			\leq& \frac{d^2}{\alpha(d) \mu^2} \int_{B_w} \Big\| \left[\ell(\mathbf{w} + \mu \mathbf{u};\mathbf{z}) - \ell(\mathbf{w};\mathbf{z}) - \langle \nabla \ell(\mathbf{w};\mathbf{z}), \mu \mathbf{u} \rangle + \langle \nabla \ell(\mathbf{w};\mathbf{z}), \mu \mathbf{u} \rangle \right]\mathbf{u} \Big\|_{w,*}^2 d\mathbf{u} \nonumber\\
			\leq& \frac{d^2}{\alpha(d) \mu^2} \int_{B_w} \Bigg[2\Big\| \left[\ell(\mathbf{w} + \mu \mathbf{u};\mathbf{z}) - \ell(\mathbf{w};\mathbf{z}) - \langle \nabla \ell(\mathbf{w};\mathbf{z}), \mu \mathbf{u} \rangle\right]\mathbf{u}\Big\|_{w,*} + 2\Big\|\langle \nabla \ell(\mathbf{w};\mathbf{z}), \mu \mathbf{u}\rangle\mathbf{u} \Big\|_{w,*}^2\Bigg] d\mathbf{u} \nonumber\\
			\leq& \frac{d^2}{\alpha(d) \mu^2}\left[\int_{B_w}2 \left\| \frac{L \mu^2}{2} \|\mathbf{u}\|_w^2\mathbf{u} \right\|_{w,*}^2 d\mathbf{u}+ \int_{B_w}2 \mu^2 \Big\|\|\nabla \ell(\mathbf{w};\mathbf{z})\|_{w,*}\|\mathbf{u}\|_{w}\mathbf{u}\Big\|_{w,*}^2\right] d\mathbf{u} \nonumber\\
			\leq &\frac{d^2}{\alpha(d) \mu^2} \left[ \frac{\tau_1^4\tau_2^2L^2 \mu^4}{2} \alpha(d) + 2 \mu^2 \tau_1^2\tau_2^2\|\nabla \ell(\mathbf{w};\mathbf{z})\|_{w,*}^2\alpha(d) \right] \nonumber\\
			= &2\tau_1^2\tau_2^2d^2\|\nabla \ell(\mathbf{w};\mathbf{z})\|_{w,*}^2 + \frac{\tau_1^4\tau_2^2\mu^2 d^2 L^2}{2},\nonumber
		\end{align}
		where $\alpha(d)$ is the surface area of $B_w$.
		By \eqref{expected gradient bound} and \eqref{variance bound}, we get
		\begin{align}
			\mathbb{E}_{\mathbf{z}\sim \mathcal{P}_i}\left[\left\| \nabla \ell(\mathbf{w}; \mathbf{z})\right\|_{w,*}^2\right] &\leq2\mathbb{E}_{\mathbf{z}\sim \mathcal{P}_i}\left[\left\| \nabla R_i(\mathbf{w}\right\|_{w,*}^2\right]+2\mathbb{E}_{\mathbf{z}\sim \mathcal{P}_i}\left[\left\| \nabla \ell(\mathbf{w}; \mathbf{z}) - \nabla R_i(\mathbf{w}) \right\|_{w,*}^2\right]\nonumber \\
			&\leq2G^2+2\sigma^2, \nonumber
		\end{align}
		which completes the proof of \eqref{lem1.2}.
	\end{proof}
	
	\subsubsection{Complexity Analysis}\label{3.1.2}
	\noindent Next, we analyze the iteration complexity of Algorithm \ref{alg:1}. To this end, we first present the optimization error of $\bar{\mathbf{w}}_t^{(i)}$ in \eqref{weighted value for smooth min} for each risk function.
	
	\begin{theorem}\label{thm:2}
		If Assumptions \ref{assum:1},\ref{assum:2},\ref{assum:3} and \ref{assum:4} hold, by setting $\mu_t=\frac{2}{\tau_1L\sqrt{t+1}}$ and $\eta_t^{(i)} = \frac{1}{\sqrt{2}\tau_1\tau_2d\sqrt{t+1}}$ in Algorithm \ref{alg:1}, we have
		\begin{equation}\label{smooth min convergence}
			\mathbb{E}[R_i(\bar{\mathbf{w}}_t^{(i)}) - R_i^*] \leq \frac{4\sqrt{2}d\tau_1\tau_2\left[D^2+\left(G^2+\sigma^2+D\right)\ln 3+\frac{3}{8}\right]}{\sqrt{t+2}}= \mathcal{O}\left(\frac{1}{\sqrt{t}}\right).
		\end{equation}
	\end{theorem}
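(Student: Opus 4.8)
The plan is to run the standard stochastic mirror descent analysis on each fixed convex minimization problem $\min_{\mathbf{w}\in\mathcal{W}}R_i(\mathbf{w})$, treating the zeroth-order gradient $\hat{\nabla}\ell(\mathbf{w}_t^{(i)};\mathcal{P}_i)$ as a \emph{biased} stochastic gradient whose first two moments are governed by Lemma \ref{lem:1}. First I would invoke the first-order optimality condition of the proximal update \eqref{ZO-SMD for smooth min} together with the three-point identity for the Bregman distance, and then split $\mathbf{w}_t^{(i)}-\mathbf{w}=(\mathbf{w}_t^{(i)}-\mathbf{w}_{t+1}^{(i)})+(\mathbf{w}_{t+1}^{(i)}-\mathbf{w})$, apply Young's inequality to the first piece, and cancel the resulting $\tfrac12\|\mathbf{w}_{t+1}^{(i)}-\mathbf{w}_t^{(i)}\|_w^2$ against $B_w(\mathbf{w}_{t+1}^{(i)},\mathbf{w}_t^{(i)})$ via the $1$-strong convexity of $\nu_w$. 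This yields, for every $\mathbf{w}\in\mathcal{W}$,
$$\eta_t^{(i)}\langle\hat{\nabla}\ell(\mathbf{w}_t^{(i)};\mathcal{P}_i),\mathbf{w}_t^{(i)}-\mathbf{w}\rangle\leq B_w(\mathbf{w},\mathbf{w}_t^{(i)})-B_w(\mathbf{w},\mathbf{w}_{t+1}^{(i)})+\frac{(\eta_t^{(i)})^2}{2}\|\hat{\nabla}\ell(\mathbf{w}_t^{(i)};\mathcal{P}_i)\|_{w,*}^2.$$

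Next I take expectation and insert the bias decomposition \eqref{lem1.1}: since $\mathbb{E}[\hat{\nabla}\ell(\mathbf{w}_t^{(i)};\mathcal{P}_i)]=\nabla R_i(\mathbf{w}_t^{(i)})+\tfrac{\tau_1^2\mu_t dL}{2}v(\mu_t,\mathbf{w}_t^{(i)})$, the left-hand inner product splits into the ``true'' term $\langle\nabla R_i(\mathbf{w}_t^{(i)}),\mathbf{w}_t^{(i)}-\mathbf{w}\rangle$, which I lower-bound by $R_i(\mathbf{w}_t^{(i)})-R_i(\mathbf{w})$ using convexity (Assumption \ref{assum:1}), plus a bias term bounded in magnitude by $\tfrac{\tau_1^2\mu_t dL}{2}\|v\|_{w,*}\|\mathbf{w}_t^{(i)}-\mathbf{w}\|_w\leq \tfrac{\tau_1\tau_2 d}{\sqrt{t+1}}\,D$ after substituting $\mu_t=\tfrac{2}{\tau_1L\sqrt{t+1}}$, using $\|v\|_{w,*}\leq\tau_2$ and the diameter bound from \eqref{B_w bound}. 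The second moment is controlled by \eqref{lem1.2}, which with this $\mu_t$ becomes $4\tau_1^2\tau_2^2(G^2+\sigma^2)d^2+\tfrac{2\tau_1^2\tau_2^2d^2}{t+1}$.

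I would then set $\mathbf{w}=\mathbf{w}_i^{*}:=\operatorname{argmin}_{\mathbf{w}\in\mathcal{W}}R_i(\mathbf{w})$, sum the resulting inequality over $j=\lceil t/2\rceil,\dots,t$, and telescope the Bregman terms (bounded by $D^2$). Dividing by $\sum_{j=\lceil t/2\rceil}^{t}\eta_j^{(i)}$ and applying Jensen's inequality to the weighted average \eqref{weighted value for smooth min} produces $\mathbb{E}[R_i(\bar{\mathbf{w}}_t^{(i)})-R_i^{*}]$ on the left. What remains is purely to evaluate the scalar series after substituting $\eta_j^{(i)}=\tfrac{1}{\sqrt2\tau_1\tau_2 d\sqrt{j+1}}$: the bias and the $O(1)$ variance pieces each contribute a $\sum_j\tfrac{1}{j+1}$-type series bounded by $\ln 3$ (valid for $t\geq2$ because $\tfrac{t+1}{\lceil t/2\rceil}\leq3$), the $\mu_t^2$ correction yields a $\tfrac12\sum_j\tfrac{1}{(j+1)^2}$ tail bounded by $\tfrac38$, and the denominator is lower-bounded by $\tfrac{\sqrt{t+2}}{4\sqrt2\,\tau_1\tau_2 d}$ since the $\approx t/2$ terms $\tfrac{1}{\sqrt{j+1}}$ each exceed $\tfrac{1}{\sqrt{t+1}}$, so that $\sum_j\tfrac{1}{\sqrt{j+1}}\geq\tfrac{t/2}{\sqrt{t+1}}\geq\tfrac{\sqrt{t+2}}{4}$ for $t\geq2$. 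Assembling the numerator $D^2+(G^2+\sigma^2+D)\ln 3+\tfrac38$ over this denominator gives exactly \eqref{smooth min convergence}.

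The main obstacle I anticipate is the bias control rather than the mirror-descent mechanics: because the sphere-smoothing estimator is biased, the coupling $\mu_t\propto 1/\sqrt{t+1}$ must be chosen precisely so that $\eta_t^{(i)}\cdot\tfrac{\tau_1^2\mu_t dL}{2}\tau_2 D$ is of order $1/(t+1)$ and hence summable to a logarithm; a mismatched smoothing schedule would either leave a non-vanishing bias or inflate the variance correction $\tfrac{\tau_1^4\tau_2^2\mu_t^2 d^2L^2}{2}$ beyond the $1/\sqrt{t}$ rate. The secondary point requiring care is verifying that the dimension factor $d$ enters only through the overall $\tau_1\tau_2 d$ prefactor and does not degrade the order in $t$.
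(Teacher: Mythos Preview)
Your proposal is correct and follows essentially the same route as the paper: apply the one-step mirror-descent inequality to \eqref{ZO-SMD for smooth min}, decompose the inner product into the true-gradient part (handled by convexity) and the bias part (handled by \eqref{lem1.1}), bound the squared dual norm via \eqref{lem1.2}, telescope over $j=\lceil t/2\rceil,\dots,t$, and finish with the same three scalar estimates $\sum_j\tfrac{1}{j+1}\leq\ln 3$, $\sum_j\tfrac{1}{(j+1)^2}\leq\tfrac34$, $\sum_j\tfrac{1}{\sqrt{j+1}}\geq\tfrac14\sqrt{t+2}$. The only cosmetic discrepancy is in the diameter step: to land on the stated constant $D\ln 3$ (rather than $\tfrac{D}{\sqrt2}\ln 3$) you should bound $\|\mathbf{w}_j^{(i)}-\mathbf{w}_*^{(i)}\|_w\leq\sqrt{2B_w(\mathbf{w}_j^{(i)},\mathbf{w}_*^{(i)})}\leq\sqrt{2}\,D$ as the paper does, not $\leq D$.
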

	
	\begin{proof}
		Let $\mathbf{w}_{*}^{(i)} \in {\operatorname{argmin}}_{\mathbf{w} \in \mathcal{W}} R_i(\mathbf{w})$ be the optimal solution that minimizes $R_i(\mathbf{w})$. From the convexity of the risk function, we have
		\begin{align}\label{bias of smooth min}
			R_i(\bar{\mathbf{w}}_t^{(i)}) - R_i(\mathbf{w}_*^{(i)}) &= R_i\left(\sum_{j=\lceil\frac{t}{2}\rceil}^{t}\frac{\eta_j^{(i)} \mathbf{w}^{(i)}_j}{\sum_{k=\lceil\frac{t}{2}\rceil}^{t} \eta_k^{(i)}}\right) - R_i(\mathbf{w}_*^{(i)}) \nonumber \\
			&\leq \left(\sum_{j=\lceil\frac{t}{2}\rceil}^{t} \frac{\eta_j^{(i)}}{\sum_{k=\lceil\frac{t}{2}\rceil}^{t} \eta_k^{(i)}} R_i(\mathbf{w}_j^{(i)})\right) - R_i(\mathbf{w}_*^{(i)}) \nonumber \\
			&= \sum_{j=\lceil\frac{t}{2}\rceil}^{t} \frac{1}{\sum_{k=\lceil\frac{t}{2}\rceil}^{t} \eta_k^{(i)}}\eta_j^{(i)} \left(R_i(\mathbf{w}_j^{(i)}) - R_i(\mathbf{w}_*^{(i)})\right) \nonumber \\
			&\leq \sum_{j=\lceil\frac{t}{2}\rceil}^{t} \frac{1}{\sum_{k=\lceil\frac{t}{2}\rceil}^{t} \eta_k^{(i)}}\eta_j^{(i)} \left\langle \nabla R_i(\mathbf{w}_j^{(i)}), \mathbf{w}_j^{(i)} - \mathbf{w}_*^{(i)} \right\rangle.
		\end{align}
        The r.h.s of \eqref{bias of smooth min} can be rewritten as follows:
		\begin{align}\label{smooth min split}
			\eta_j^{(i)} \left\langle \nabla R_i(\mathbf{w}_j^{(i)}), \mathbf{w}_j^{(i)} - \mathbf{w}_*^{(i)} \right\rangle &= \eta_j^{(i)} \left\langle \hat{\nabla}\ell(\mathbf{w}_j^{(i)}; \mathcal{P}_i), \mathbf{w}_j^{(i)} - \mathbf{w}_*^{(i)}\right\rangle  \nonumber \\
			&\quad + \eta_j^{(i)} \left\langle \nabla R_i(\mathbf{w}_j^{(i)}) - \hat{\nabla}\ell(\mathbf{w}_j^{(i)}; \mathcal{P}_i), \mathbf{w}_j^{(i)} - \mathbf{w}_*^{(i)} \right\rangle.
		\end{align}
		Firstly, we estimate the first term in the r.h.s of \eqref{smooth min split}. From the property of mirror descent (Lemma 2.1 in \citep{nemirovski2009robust}), we have
		\begin{align}\label{smooth min split1}
			&\eta_j^{(i)} \left\langle\hat{\nabla}\ell(\mathbf{w}_j^{(i)}; \mathcal{P}_i), \mathbf{w}_j^{(i)} - \mathbf{w}_*^{(i)}\right\rangle \nonumber \\
			\leq& B_w\left(\mathbf{w}_*^{(i)}, \mathbf{w}_j^{(i)}\right) - B_w\left(\mathbf{w}_*^{(i)}, \mathbf{w}_{j+1}^{(i)}\right) + \frac{(\eta_j^{(i)})^2}{2} \left\|\hat{\nabla}\ell(\mathbf{w}_j^{(i)}; \mathcal{P}_i)\right\|_{w,*}^2.
		\end{align}
		Summing \eqref{smooth min split1} over $j = \lceil\frac{t}{2}\rceil,\cdots,t$ and taking expectation over both sides, we have
		\begin{align}\label{smooth min split1*}
			&\mathbb{E}\left[\sum_{j=\lceil\frac{t}{2}\rceil}^{t} \eta_j^{(i)} \left\langle\hat{\nabla}\ell(\mathbf{w}_j^{(i)}; \mathcal{P}_i), \mathbf{w}_j^{(i)} - \mathbf{w}_*^{(i)}\right\rangle \right] \nonumber\\
			\leq& B_w\left(\mathbf{w}_*^{(i)}, \mathbf{w}_1^{(i)}\right) +  \sum_{j=\lceil\frac{t}{2}\rceil}^{t}\frac{\left(\eta_j^{(i)}\right)^2}{2} \mathbb{E}\left[ \left\| \hat{\nabla}\ell(\mathbf{w}_j^{(i)}; \mathcal{P}_i) \right\|^2_{w,*}\right].
		\end{align}
		By \eqref{lem1.2} in Lemma \ref{lem:1}, we obtain
		\begin{align}\label{smooth min zo bound}
			&\mathbb{E} \left[\left\|\hat{\nabla}\ell(\mathbf{w}_j^{(i)}; \mathcal{P}_i)\right\|_{w,*}^2 \right] \leq \mathbb{E} \left[\left\| \frac{1}{r} \sum_{k=1}^{r} \hat{\nabla}\ell(\mathbf{w}_j^{(i)}; \mathbf{z}_{j,i}^{(k)})\right\|_{w,*}^2 \right]  \nonumber \\
			\leq& \frac{1}{r} \mathbb{E} \left[\sum_{k=1}^{r} \left\|\hat{\nabla}\ell(\mathbf{w}_j^{(i)}; \mathbf{z}_{j,i}^{(k)})\right\|_{w,*}^2 \right]\leq 4\tau_1^2\tau_2^2(G^2+\sigma^2)d^2 + \frac{\tau_1^4\tau_2^2 d^2 L^2}{2}\mu_j^2. 
		\end{align}
		Plugging \eqref{B_w bound} and \eqref{smooth min zo bound} into \eqref{smooth min split1*}, we then obtain
		\begin{align}\label{smooth min split1**}
			&\mathbb{E}\left[\sum_{j=\lceil\frac{t}{2}\rceil}^{t} \eta_j^{(i)} \left\langle\hat{\nabla}\ell(\mathbf{w}_j^{(i)}; \mathcal{P}_i), \mathbf{w}_j^{(i)} - \mathbf{w}_*^{(i)}\right\rangle \right] \nonumber\\
			\leq& D^2 + 2\tau_1^2\tau_2^2(G^2+\sigma^2)d^2\sum_{j=\lceil\frac{t}{2}\rceil}^{t} \left(\eta_j^{(i)}\right)^2 + \frac{\tau_1^4\tau_2^2 d^2 L^2}{4}\sum_{j=\lceil\frac{t}{2}\rceil}^{t}\mu_j^2 \left(\eta_j^{(i)}\right)^2.
		\end{align}
		Next, we estimate the second term in the r.h.s of \eqref{smooth min split}. By \eqref{lem1.1} in Lemma \ref{lem:1}, we have
		\begin{align} 
			\mathbb{E}_{(\mathbf{u},\mathbf{z})}\left[\hat{\nabla} \ell(\mathbf{w}_j^{(i)}; \mathcal{P}_i)\right]-\nabla R_i\left(\mathbf{w}_j^{(i)}\right)=\frac{\tau_1^2\mu_j dL}{2}v(\mu_j,\mathbf{w}_j^{(i)}).\nonumber
		\end{align}
		Then, from the property of Bregman distance and \eqref{B_w bound}, we get
		\begin{align}\label{smooth min split2}
			&\mathbb{E}\left[\sum_{j=\lceil\frac{t}{2}\rceil}^{t} \eta_j^{(i)} \left\langle \nabla R_i(\mathbf{w}_j^{(i)}) - \hat{\nabla}\ell(\mathbf{w}_j^{(i)}; \mathcal{P}_i), \mathbf{w}_j^{(i)} - \mathbf{w}_*^{(i)} \right\rangle\right] \nonumber\\ =&\sum_{j=\lceil\frac{t}{2}\rceil}^{t} \eta_j^{(i)} \left\langle -\frac{\tau_1^2\mu_j dL}{2}v(\mu_j,\mathbf{w}_j^{(i)}), \mathbf{w}_j^{(i)} - \mathbf{w}_*^{(i)} \right\rangle \nonumber \\
			\leq& \sum_{j=\lceil\frac{t}{2}\rceil}^{t} \frac{\tau_1^2\mu_j dL}{2}\eta_j^{(i)}\left\|v(\mu_j,\mathbf{w}_j^{(i)})\right\|_{w,*}\sqrt{2B_w\left(\mathbf{w}_j^{(i)},\mathbf{w}_*^{(i)}\right)} \nonumber\\
			\leq&\frac{\sqrt{2}\tau_1^2\tau_2DdL}{2}\sum_{j=\lceil\frac{t}{2}\rceil}^{t}\mu_j\eta_j^{(i)}.
		\end{align}
		Taking expectation over \eqref{bias of smooth min} and plugging \eqref{smooth min split1**} and \eqref{smooth min split2} into \eqref{bias of smooth min}, we have
		\begin{align}\label{smooth min simplify}
			&\mathbb{E}\left[R_i(\bar{\mathbf{w}}_t^{(i)}) - R_i(\mathbf{w}_*^{(i)})\right]  \nonumber\\
			\leq& \frac{D^2 + 2d^2\tau_1^2\tau_2^2(G^2+\sigma^2)\sum\limits_{j=\lceil\frac{t}{2}\rceil}^{t} \left(\eta_j^{(i)}\right)^2 + \frac{\tau_1^4\tau_2^2 d^2 L^2}{4}\sum\limits_{j=\lceil\frac{t}{2}\rceil}^{t}\mu_j^2 \left(\eta_j^{(i)}\right)^2+\frac{\sqrt{2}\tau_1^2\tau_2DdL}{2}\sum\limits_{j=\lceil\frac{t}{2}\rceil}^{t}\mu_j\eta_j^{(i)}}{\sum\limits_{j=\lceil\frac{t}{2}\rceil}^{t} \eta_j^{(i)}}.
		\end{align}
		For any $t \geq 2$, it holds that
		\begin{equation}\label{inequality1}
			\sum_{j=\lceil\frac{t}{2}\rceil}^{t} \frac{1}{(j+1)^2} \leq \frac{\pi^2}{6}-1\leq \frac{3}{4},
		\end{equation}
	then by setting $\mu_j=\frac{2}{\tau_1L\sqrt{j+1}}$ and $\eta_j^{(i)} = \frac{1}{\sqrt{2}\tau_1\tau_2d\sqrt{j+1}}$, we obtain 
		\begin{align}
			\sum_{j=\lceil\frac{t}{2}\rceil}^{t} \frac{1}{j+1} &\leq \int_{\lceil\frac{t}{2}\rceil-1}^{t} \frac{1}{x+1}dx= \ln\left(\frac{t+1}{\lceil0.5t\rceil}\right)\leq\ln\left(\frac{t+1}{0.5t}\right)=\ln\left(2+\frac{2}{t}\right)\leq\ln3,\label{inequality2}\\
			\sum_{j=\lceil\frac{t}{2}\rceil}^{t} \frac{1}{\sqrt{j+1}} &\geq \int_{\lceil\frac{t}{2}\rceil}^{t+1} \frac{1}{\sqrt{x+1}}dx =\frac{t}{\sqrt{t+2}+\sqrt{0.5t+2}}\geq\frac{t}{2\sqrt{t+2}}\geq\frac{1}{4}\sqrt{t+2}.\label{inequality3}
		\end{align}
		Plugging \eqref{inequality1}, \eqref{inequality2} and \eqref{inequality3} into \eqref{smooth min simplify}, we have
		\begin{align}
			&\mathbb{E}\left[R_i(\bar{\mathbf{w}}_t^{(i)}) - R_i(\mathbf{w}_*^{(i)})\right] \nonumber\\
			\leq& \frac{D^2+\left(G^2+\sigma^2\right)\sum_{j=\lceil\frac{t}{2}\rceil}^{t}\frac{1}{j+1}+\frac{1}{2}\sum_{j=\lceil\frac{t}{2}\rceil}^{t}\frac{1}{(j+1)^2}+D\sum_{j=\lceil\frac{t}{2}\rceil}^{t}\frac{1}{j+1}}{\sum_{j=\lceil\frac{t}{2}\rceil}^{t}\frac{1}{\sqrt{2}\tau_1\tau_2d\sqrt{j+1}}}\nonumber\\
			\leq& \frac{4\sqrt{2}\tau_1\tau_2d\left[D^2+\left(G^2+\sigma^2+D\right)\ln 3+\frac{3}{8}\right]}{\sqrt{t+2}}= \mathcal{O}\left(\frac{1}{\sqrt{t}}\right),
		\end{align}
		which completes the proof.
	\end{proof}
	By Theorem \ref{thm:2}, with a varying step size, the iteration complexity of ZO-SMD to estimate the expected excess risk in \eqref{smooth min convergence} is of the order $\mathcal{O}\left(1/\sqrt{t}\right)$, which improves the two-point gradient estimates algorithm proposed in \citep{duchi2015optimal} by a $\log t$ factor. Note that the SMD algorithm proposed in \citep{zhang2023efficient} to solve (non-)smooth MERO is actually a first-order method, and the iteration complexity is $\tilde{\mathcal{O}}\left(1/\sqrt{t}\right)$. 
	
	Next, we examine the optimization error of $\bar{\mathbf{w}}_t$ and $\bar{\mathbf{q}}_t$ in Algorithm \ref{alg:1}.
	\begin{theorem}\label{thm:3}
		If Assumptions \ref{assum:1},\ref{assum:2} and \ref{assum:3} hold, by setting
		\begin{equation}
			\eta_t^{(i)} = \frac{1}{\sqrt{2}\tau_1\tau_2d\sqrt{t+1}}, \quad \eta_t^{w} = \frac{2D^2}{\sqrt{2}\tau_1\tau_2d\sqrt{t+1}}, \quad \eta_t^{q} = \frac{2\ln m}{\sqrt{2}\tau_1\tau_2d\sqrt{t+1}} \nonumber
		\end{equation}
		in Algorithm \ref{alg:1}, and $\mu_t=\frac{2}{\tau_1L\sqrt{t+1}}$, we have
		\begin{align}\label{smooth minimax convergence}
			& \mathbb{E}\left[\max_{\mathbf{q} \in \Delta_m} \phi(\bar{\mathbf{w}}_t,\mathbf{q}) - \min_{\mathbf{w} \in \mathcal{W}} \phi(\mathbf{w}, \bar{\mathbf{q}}_t)\right] \nonumber\\
			\leq& \frac{4\sqrt{2}\tau_1\tau_2d}{\sqrt{t + 2}}\left\{2+\frac{15D^2}{4}+ \left\{D+\frac{20\tau_1^2\tau_2^2(G^2+\sigma^2)D^2d^2 + 5C^2 \ln m}{2\tau_1^2\tau_2^2d^2} \right.\right. \nonumber \\
			&\left.\left. +8\left[D^2+\left(G^2+\sigma^2+D\right)\ln 3+\frac{3}{8}\right]\right\}\ln 3\right\}=\mathcal{O}\left( \frac{1}{\sqrt{t}} \right).
		\end{align}
	\end{theorem}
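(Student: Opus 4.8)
The plan is to bound the saddle-point gap by reducing it, through the convex-concave structure of $\phi$, to a sum of per-iteration linearized terms, and then to control those terms with the mirror-descent inequality together with the two bias estimates already available: Lemma~\ref{lem:1} for the zeroth-order gradient $\hat{\mathbf{g}}_w$, and Theorem~\ref{thm:2} for the plug-in error $R_i(\bar{\mathbf{w}}_t^{(i)})-R_i^*$ hidden inside $\mathbf{g}_q$. A convenient feature of the prescribed step sizes is that $\eta_j^{w}$ and $\eta_j^{q}$ are both proportional to $1/\sqrt{j+1}$, so the partial-averaging weights defining $\bar{\mathbf{w}}_t$ and $\bar{\mathbf{q}}_t$ coincide; denote the common weight $\lambda_j=(1/\sqrt{j+1})/\sum_{k=\lceil t/2\rceil}^{t}(1/\sqrt{k+1})$. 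Using convexity of $\phi$ in $\mathbf{w}$ and linearity in $\mathbf{q}$ (Jensen's inequality), for any fixed $(\mathbf{w},\mathbf{q})$ one has
\[
\phi(\bar{\mathbf{w}}_t,\mathbf{q})-\phi(\mathbf{w},\bar{\mathbf{q}}_t)\leq \sum_{j=\lceil t/2\rceil}^{t}\lambda_j\left[\langle\nabla_w\phi(\mathbf{w}_j,\mathbf{q}_j),\mathbf{w}_j-\mathbf{w}\rangle+\langle\nabla_q\phi(\mathbf{w}_j,\mathbf{q}_j),\mathbf{q}-\mathbf{q}_j\rangle\right].
\]
Choosing $(\mathbf{w},\mathbf{q})$ to be the minimizer and maximizer that define the gap $\epsilon_\phi(\bar{\mathbf{w}}_t,\bar{\mathbf{q}}_t)$ reduces the task to bounding these two weighted sums.

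For the $\mathbf{w}$-block I would add and subtract $\hat{\mathbf{g}}_w(\mathbf{w}_j,\mathbf{q}_j)$. The inner product against $\hat{\mathbf{g}}_w$ is handled by the mirror-descent property (Lemma~2.1 in \citep{nemirovski2009robust}) applied to \eqref{ZO-SMD w for smooth minimax}, producing a telescoping term $B_w(\mathbf{w},\mathbf{w}_j)-B_w(\mathbf{w},\mathbf{w}_{j+1})$ plus $\tfrac{(\eta_j^w)^2}{2}\|\hat{\mathbf{g}}_w\|_{w,*}^2$, the latter bounded exactly as in \eqref{smooth min zo bound} via Lemma~\ref{lem:1}. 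The residual term $\langle\nabla_w\phi-\hat{\mathbf{g}}_w,\mathbf{w}_j-\mathbf{w}\rangle$ has expectation $-\tfrac{\tau_1^2\mu_j dL}{2}\langle v(\mu_j,\mathbf{w}_j),\mathbf{w}_j-\mathbf{w}\rangle$ by \eqref{lem1.1}, which is $\mathcal{O}(\mu_j)$ after applying Cauchy-Schwarz and \eqref{B_w bound}, exactly mirroring \eqref{smooth min split2}.

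Symmetrically, for the $\mathbf{q}$-block I add and subtract $\mathbf{g}_q(\mathbf{w}_j,\mathbf{q}_j)$. The mirror-descent property applied to \eqref{ZO-SMD q for smooth minimax} telescopes the $B_q$ terms and leaves $\tfrac{(\eta_j^q)^2}{2}\|\mathbf{g}_q\|_{q,*}^2$, which I bound using $\|\mathbf{g}_q\|_{\infty}\leq C$ from Assumption~\ref{assum:2} (each coordinate is a difference of losses). The residual $\langle\nabla_q\phi-\mathbf{g}_q,\mathbf{q}-\mathbf{q}_j\rangle$ is the crux: since $\mathbf{g}_q$ is unbiased for $\nabla_q\hat{\phi}$ rather than for $\nabla_q\phi$, its conditional expectation equals $\sum_i[R_i(\bar{\mathbf{w}}_j^{(i)})-R_i^*](q_i-q_{j,i})$, which I would bound in absolute value by $\max_i\mathbb{E}[R_i(\bar{\mathbf{w}}_j^{(i)})-R_i^*]$ (using $\sum_i|q_i-q_{j,i}|\leq 2$) and then replace by $\mathcal{O}(1/\sqrt{j})$ through Theorem~\ref{thm:2}.

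Finally I would assemble the pieces. The telescoped Bregman terms contribute $B_w(\mathbf{w},\mathbf{w}_{\lceil t/2\rceil})\leq D^2$ and $B_q(\mathbf{q},\mathbf{q}_{\lceil t/2\rceil})\leq\ln m$ by \eqref{B_w bound} and \eqref{B_q bound}, while the remaining sums over $j\in[\lceil t/2\rceil,t]$ of $(\eta_j)^2$, $\mu_j\eta_j$, $\mu_j^2(\eta_j)^2$, and $\lambda_j/\sqrt{j}$ are all dispatched by the elementary estimates \eqref{inequality1}, \eqref{inequality2} and \eqref{inequality3}; after dividing by $\sum_k\eta_k$, which is of order $\sqrt{t+2}$ by \eqref{inequality3}, each block yields $\mathcal{O}(1/\sqrt{t})$ and the constants aggregate into the stated bound. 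I expect the main obstacle to be the careful bookkeeping of the $R_i^*$-plug-in bias in the $\mathbf{q}$-gradient: unlike a standard SMD analysis the gradient $\mathbf{g}_q$ is only unbiased for $\hat{\phi}$, so the argument must route its bias through Theorem~\ref{thm:2} and verify that the resulting $\mathcal{O}(1/\sqrt{j})$ terms, reweighted by $\lambda_j$ over the second half of the horizon, still aggregate to $\mathcal{O}(1/\sqrt{t})$ rather than inflating the rate by a logarithmic factor.
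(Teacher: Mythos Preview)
Your overall decomposition and the treatment of the bias pieces are on the right track, but there is a genuine gap in how you dispose of the residuals $\langle\nabla_w\phi-\hat{\mathbf g}_w,\mathbf w_j-\mathbf w\rangle$ and $\langle\nabla_q\phi-\mathbf g_q,\mathbf q-\mathbf q_j\rangle$. You assert that the first ``has expectation $-\tfrac{\tau_1^2\mu_j dL}{2}\langle v(\mu_j,\mathbf w_j),\mathbf w_j-\mathbf w\rangle$'' and that the second has conditional expectation $\sum_i[R_i(\bar{\mathbf w}_j^{(i)})-R_i^*](q_i-q_{j,i})$. Both identities are correct \emph{for a deterministic} $(\mathbf w,\mathbf q)$, but each residual also carries a zero-mean martingale increment $\mathbb E_{j-1}[\hat{\mathbf g}_w]-\hat{\mathbf g}_w$ (resp.\ $\mathbb E_{j-1}[\mathbf g_q]-\mathbf g_q$). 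The comparison point you plug in is the \emph{random} argmin/argmax of the gap, which depends on $\bar{\mathbf w}_t,\bar{\mathbf q}_t$ and hence on the entire history; consequently the inner product of the martingale noise with $-\mathbf w$ (resp.\ $\mathbf q$) does \emph{not} vanish in expectation. In effect you are passing $\mathbb E$ through $\max_{\mathbf w,\mathbf q}$, which is illegitimate precisely for this fluctuation term, and nothing in your outline controls $\mathbb E\bigl[\max_{\mathbf w}\sum_j\eta_j\langle\mathbb E_{j-1}[\hat{\mathbf g}_w]-\hat{\mathbf g}_w,-\mathbf w\rangle\bigr]$.

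The paper closes exactly this hole with the ``ghost iterate'' device of \citet{nemirovski2009robust}: after merging the $\mathbf w$- and $\mathbf q$-updates into a single mirror-descent step on $\mathcal W\times\Delta_m$ (with the composite norm $\|(\mathbf w,\mathbf q)\|^2=\tfrac{1}{2D^2}\|\mathbf w\|_w^2+\tfrac{1}{2\ln m}\|\mathbf q\|_1^2$), it introduces an auxiliary sequence $\mathbf y_{t+1}=\arg\min_{\mathbf x}\{\eta_t\langle\mathbb E_{t-1}[\hat{\mathbf g}]-\hat{\mathbf g},\mathbf x-\mathbf y_t\rangle+B(\mathbf x,\mathbf y_t)\}$, splits $\mathbf x_j-\mathbf x=(\mathbf x_j-\mathbf y_j)+(\mathbf y_j-\mathbf x)$, kills the first piece in expectation by adaptedness of $(\mathbf x_j,\mathbf y_j)$, and applies the mirror-descent inequality to the ghost sequence on the second piece. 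This produces a second telescoped Bregman term plus a second squared-norm sum, which is why the constant in \eqref{smooth minimax convergence} carries the factor $5=1+4$ in front of the $(G^2+\sigma^2)D^2$ and $C^2\ln m$ contributions; your argument, had it gone through, would have produced only the factor $1$. Your block-separated layout would be fine as well, but you would need to run a ghost sequence in each block (or supply an alternative martingale-in-dual-norm argument, which is delicate for the $\ell_\infty$ dual arising in the $\mathbf q$-block).
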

	
	\begin{proof}
		We will first combine the two update rules in \eqref{ZO-SMD w for smooth minimax} and \eqref{ZO-SMD q for smooth minimax} into a single one according to the analysis of \citep{nemirovski2009robust}.
		
		\noindent (1) Merging the two update rules in \eqref{ZO-SMD w for smooth minimax} and \eqref{ZO-SMD q for smooth minimax}
		
		\noindent Let $\mathcal{E}$ be the space in which $\mathcal{W}$ resides. We equip the Cartesian product $\mathcal{E}\times\mathbb{R}^m$ with the following norm and dual norm:
		\begin{equation}
			\|(\mathbf{w}, \mathbf{q})\| = \sqrt{\frac{1}{2D^2} \|\mathbf{w}\|_w^2 + \frac{1}{2\ln m} \|\mathbf{q}\|_1^2},\nonumber
		\end{equation}
		\begin{equation}
			\|(\mathbf{u}, \mathbf{v})\|_* = \sqrt{2D^2 \|\mathbf{u}\|_{w,*}^2 + 2\ln m \|\mathbf{v}\|_\infty^2}.\nonumber
		\end{equation}
		We use the notation $\mathbf{x}=(\mathbf{w},\mathbf{q})$ , and equip the set $\mathcal{W}\times\Delta_m$ with the distance-generating function
		\begin{equation}
			\nu(\mathbf{x}) = \nu(\mathbf{w},\mathbf{q})=\frac{1}{2D^2} \nu_w(\mathbf{w}) + \frac{1}{2\ln m} \nu_q(\mathbf{q}).\nonumber
		\end{equation}
		It is easy  to verify that $\nu(\mathbf{x})$ is 1-strongly convex w.r.t. the norm $\|\cdot\|$. Let $B(\cdot,\cdot)$ be the Bregman distance associated with $\nu(\cdot)$:
		\begin{align}
			& B(\mathbf{x}, \mathbf{x}^{'}) = \nu(\mathbf{x}) - \left[\nu(\mathbf{x}^{'}) + \langle \nabla \nu(\mathbf{x}^{'}), \mathbf{x} - \mathbf{x}^{'} \rangle \right] \nonumber \\
			=& \frac{1}{2D^2} \nu_w(\mathbf{w}^{'}) - \left[\nu_w(\mathbf{w}^{'}) + \langle \nabla\nu_w(\mathbf{w}^{'}), \mathbf{w} - \mathbf{w}^{'} \rangle \right] + \frac{1}{2\ln m} \nu_q(\mathbf{q}) - \left[\nu_q(\mathbf{q}^{'}) + \langle \nabla \nu_q(\mathbf{q}^{'}), \mathbf{q} - \mathbf{q}^{'} \rangle \right] \nonumber \\
			=& \frac{1}{2D^2} B_w(\mathbf{w}, \mathbf{w}^{'}) + \frac{1}{2\ln m} B_q(\mathbf{q}, \mathbf{q}^{'}), \nonumber
		\end{align}
		where $\mathbf{x}^{'}=(\mathbf{w}^{'},\mathbf{q}^{'})$, and we have $(\mathbf{o}_w, \mathbf{o}_q) = {\operatorname{argmin}}_{(\mathbf{w},\mathbf{q})\in \mathcal{W}\times\Delta_m} \nu(\mathbf{w},\mathbf{q})$. Then by \eqref{B_w bound} and \eqref{B_q bound}, we can show that the domain $\mathcal{W}\times\Delta_m$ is bounded since
		\begin{equation}\label{B bound}
			\max_{(\mathbf{w},\mathbf{q})\in \mathcal{W}\times\Delta_m} B_q([\mathbf{w},\mathbf{q}], [\mathbf{o}_w, \mathbf{o}_q]) \leq \frac{1}{2D^2} \max_{\mathbf{w}\in \mathcal{W}}B_w(\mathbf{w}, \mathbf{o}_w) + \frac{1}{2\ln m} \max_{\mathbf{q}\in\Delta_m}B_q(\mathbf{q}, \mathbf{o}_q) \leq 1.
		\end{equation}
		With the above configurations, \eqref{ZO-SMD w for smooth minimax} and \eqref{ZO-SMD q for smooth minimax} are equivalent to
		\begin{equation}
			\mathbf{x}_{t+1} = \underset{\mathbf{x} \in \mathcal{W} \times \Delta_m}{\operatorname{argmin}}\left\{ \eta_t \left\langle \left[\hat{\mathbf{g}}_w(\mathbf{w}_t, \mathbf{q}_t), - \mathbf{g}_q(\mathbf{w}_t, \mathbf{q}_t)\right], \mathbf{x} - \mathbf{x}_t \right\rangle + B(\mathbf{x}, \mathbf{x}_t) \right\}. \nonumber
		\end{equation}
		(2) Analysis of ZO-SMD on smooth MERO
		
 To simplify the notation, we define the true gradient of $\phi(\mathbf{w},\mathbf{q})$ at $(\mathbf{w}_t,\mathbf{q}_t)$ as
		\begin{align}
			& F(\mathbf{w}_t, \mathbf{q}_t) = \left[ \nabla_w \phi(\mathbf{w}_t, \mathbf{q}_t),  - \nabla_q \phi(\mathbf{w}_t, \mathbf{q}_t) \right] \nonumber \\
			=& \left[ \sum_{i=1}^{m} q_{t,i} \nabla R_i(\mathbf{w}_t),-\left[ R_1(\mathbf{w}_t) - R_1^*, \ldots, R_m(\mathbf{w}_t) - R_m^* \right]^{\top} \right]. \nonumber
		\end{align}
		The zeroth-order stochastic gradient $\hat{\mathbf{g}}(\mathbf{w}_t, \mathbf{q}_t)$ is defined as:
		\begin{align}
			&\hat{\mathbf{g}}(\mathbf{w}_t, \mathbf{q}_t) = \left[ \hat{\mathbf{g}}_w(\mathbf{w}_t, \mathbf{q}_t), - \mathbf{g}_q(\mathbf{w}_t, \mathbf{q}_t) \right] \nonumber \\
			=& \left[ \sum_{i=1}^{m} q_{t,i} \hat{\nabla} \ell(\mathbf{w}_t; \mathcal{P}_i),-\left[\frac{1}{r} \sum_{j=1}^{r}\left[\ell(\mathbf{w}_t; \mathbf{z}_{t,1}^{(j)}) - \ell(\bar{\mathbf{w}}_t^{(1)},\mathbf{z}_{t,1}^{(j)})\right], \cdots, \frac{1}{r} \sum_{j=1}^{r}\left[\ell(\mathbf{w}_t; \mathbf{z}_{t,m}^{(j)}) - \ell(\bar{\mathbf{w}}_t^{(m)},\mathbf{z}_{t,m}^{(j)})\right] \right]^\top \right]. \nonumber
		\end{align}
		And the bias of $\hat{\mathbf{g}}(\mathbf{w}_t, \mathbf{q}_t)$ is characterized by
		\begin{align}
			& F(\mathbf{w}_t, \mathbf{q}_t) - \mathbb{E}_{t-1}[\hat{\mathbf{g}}(\mathbf{w}_t, \mathbf{q}_t)] \nonumber \\
			=& \left[ \sum_{i=1}^{m} q_{t,i} \left[\nabla R_i(\mathbf{w}_t) - \mathbb{E}\left[\hat{\nabla} \ell(\mathbf{w}_t; \mathcal{P}_i)\right] \right],
			- \left[ R_1(\bar{\mathbf{w}}_t^{(1)}) - R_1^*, \cdots, R_m(\bar{\mathbf{w}}_t^{(m)}) - R_m^* \right]^{\top} \right]. \nonumber
		\end{align}
		From the convexity-concavity of $\phi(\mathbf{w},\mathbf{q})$, by \eqref{weighted value for smooth minimax} we have the formula of the optimization error similarly to \citep{zhang2023efficient}:
		\begin{align}\label{smooth minimax optimization error}
			&\max_{\mathbf{q} \in \Delta_m} \phi(\bar{\mathbf{w}}_t, \mathbf{q}) - \min_{\mathbf{w} \in \mathcal{W}} \phi(\mathbf{w}, \bar{\mathbf{q}}_t)\nonumber \\
			=& \max_{\mathbf{q} \in \Delta_m} \phi \left( \sum_{j=\lceil\frac{t}{2}\rceil}^{t}\frac{\eta_j^{w} \mathbf{w}_j}{\sum_{k=\lceil\frac{t}{2}\rceil}^{t} \eta_k^{w}}, \mathbf{q} \right) - \min_{\mathbf{w} \in \mathcal{W}} \phi \left( \mathbf{w}, \sum_{j=\lceil\frac{t}{2}\rceil}^{t}\frac{\eta_j^{q} \mathbf{q}_j}{\sum_{k=\lceil\frac{t}{2}\rceil}^{t} \eta_k^{q}}\right)  \nonumber \\
			=& \max_{\mathbf{q} \in \Delta_m} \phi \left( \sum_{j=\lceil\frac{t}{2}\rceil}^{t}\frac{\eta_j \mathbf{w}_j}{\sum_{k=\lceil\frac{t}{2}\rceil}^{t} \eta_k}, \mathbf{q} \right) - \min_{\mathbf{w} \in \mathcal{W}} \phi \left( \mathbf{w}, \sum_{j=\lceil\frac{t}{2}\rceil}^{t}\frac{\eta_j \mathbf{q}_j}{\sum_{k=\lceil\frac{t}{2}\rceil}^{t} \eta_k}\right)  \nonumber \\
			\leq& \left( \sum_{j=\lceil\frac{t}{2}\rceil}^{t} \eta_j \right)^{-1} \left[ \max_{\mathbf{q} \in \Delta_m} \sum_{j=\lceil\frac{t}{2}\rceil}^{t} \eta_j \phi(\mathbf{w}_j, \mathbf{q}) - \min_{\mathbf{w} \in \mathcal{W}} \sum_{j=\lceil\frac{t}{2}\rceil}^{t} \eta_j \phi(\mathbf{w}, \mathbf{q}_j) \right]  \nonumber \\
			\leq& \left( \sum_{j=\lceil\frac{t}{2}\rceil}^{t} \eta_j \right)^{-1} \left[ \max_{\mathbf{x} \in \mathcal{W} \times \Delta_m} \sum_{j=\lceil\frac{t}{2}\rceil}^{t} \eta_j \left[ \left\langle \nabla_w \phi(\mathbf{w}_j, \mathbf{q}_j), \mathbf{w}_j - \mathbf{w}\right \rangle  - \left\langle \nabla_q \phi(\mathbf{w}_j, \mathbf{q}_j), \mathbf{q}_j - \mathbf{q} \right\rangle \right] \right] \nonumber  \\
			=& \left( \sum_{j=\lceil\frac{t}{2}\rceil}^{t} \eta_j \right)^{-1} \max_{\mathbf{x} \in \mathcal{W} \times \Delta_m} \sum_{j=\lceil\frac{t}{2}\rceil}^{t} \eta_j \left\langle F(\mathbf{w}_j, \mathbf{q}_j), \mathbf{x}_j - \mathbf{x} \right\rangle.
		\end{align}
		As a result, we can decompose \eqref{smooth minimax optimization error} as follows:
		\begin{align}\label{smooth minimax split}
			&\max_{\mathbf{q} \in \Delta_m} \phi(\bar{\mathbf{w}}_t, \mathbf{q}) - \min_{\mathbf{w} \in \mathcal{W}} \phi(\mathbf{w}, \bar{\mathbf{q}}_t) \nonumber \\
			\leq& \left( \sum_{j=\lceil\frac{t}{2}\rceil}^{t} \eta_j \right)^{-1} \max_{\mathbf{x} \in \mathcal{W} \times \Delta_m} \sum_{j=\lceil\frac{t}{2}\rceil}^{t} \eta_j \left\langle\hat{\mathbf{g}} (\mathbf{w}_j, \mathbf{q}_j), \mathbf{x}_j - \mathbf{x} \right\rangle  \nonumber \\
			&+ \left( \sum_{j=\lceil\frac{t}{2}\rceil}^{t} \eta_j \right)^{-1} \max_{\mathbf{x} \in \mathcal{W} \times \Delta_m} \sum_{j=\lceil\frac{t}{2}\rceil}^{t} \eta_j \left\langle \mathbb{E}_{j-1}\left[\hat{\mathbf{g}}(\mathbf{w}_j, \mathbf{q}_j)\right] - \hat{\mathbf{g}}(\mathbf{w}_j, \mathbf{q}_j), \mathbf{x}_j - \mathbf{x} \right\rangle \nonumber \\
			&+ \left( \sum_{j=\lceil\frac{t}{2}\rceil}^{t} \eta_j \right)^{-1} \max_{\mathbf{x} \in \mathcal{W} \times \Delta_m} \sum_{j=\lceil\frac{t}{2}\rceil}^{t} \eta_j \left\langle F(\mathbf{w}_j, \mathbf{q}_j) - \mathbb{E}_{j-1}\left[\hat{\mathbf{g}}(\mathbf{w}_j, \mathbf{q}_j)\right], \mathbf{x}_j - \mathbf{x} \right\rangle.
		\end{align}
		Firstly, we estimate the first term in the r.h.s of \eqref{smooth minimax split}. From the property of mirror descent, we have
		\begin{equation}\label{smooth minimax split1}
			\eta_j \langle \hat{\mathbf{g}}(\mathbf{w}_j, \mathbf{q}_j), \mathbf{x}_j - \mathbf{x}\rangle \leq B(\mathbf{x}, \mathbf{x}_j) - B(\mathbf{x}, \mathbf{x}_{j+1}) + \frac{\eta_j^2}{2} \left\|\hat{\mathbf{g}}(\mathbf{w}_j, \mathbf{q}_j)\right\|_{*}^2.
		\end{equation}
		The norm of zeroth-order stochastic gradient $\hat{\mathbf{g}}(\mathbf{w}_j, \mathbf{q}_j)$ is well-bounded:
		\begin{align}\label{smooth minimax w zo bound}
			\mathbb{E}\left[\left\| \hat{\mathbf{g}}_w(\mathbf{w}_j, \mathbf{q}_j) \right\|_{w,*}^2\right] &= \mathbb{E} \left[\left\| \sum_{i=1}^{m} q_{j,i} \hat{\nabla} \ell(\mathbf{w}_j; \mathcal{P}_i) \right\|_{w,*}^2 \right]\leq \mathbb{E} \left[ \left(\sum_{i=1}^{m} q_{j,i} \left\|\hat{\nabla} \ell(\mathbf{w}_j; \mathcal{P}_i) \right\|_{w,*}\right)^2 \right] \nonumber \\
			&= \left(\sum_{i=1}^{m} q_{j,i}\mathbb{E}\left[\left\|\hat{\nabla} \ell(\mathbf{w}_j; \mathcal{P}_i) \right\|_{w,*}\right] \right)^2 \nonumber\\
			&\leq \left(\sum_{i=1}^{m} q_{j,i}\sqrt{4\tau_1^2\tau_2^2(G^2+\sigma^2)d^2 + \frac{\tau_1^4\tau_2^2 d^2 L^2}{2}\mu_j^2}\right)^2\nonumber \\
			&=4\tau_1^2\tau_2^2(G^2+\sigma^2)d^2 + \frac{\tau_1^4\tau_2^2 d^2 L^2}{2}\mu_j^2,
		\end{align}
		where the second equality is by the independence of distributions, and the last inequality is by \eqref{smooth min zo bound} and the property of expectation: $\left(\mathbb{E}\left[X\right]\right)^2 \leq \mathbb{E}\left[X^2\right]$. By \eqref{loss function bound} in Assumption \ref{assum:2}, we have
		\begin{equation}\label{smooth minimax q zo bound}
			\|\mathbf{g}_q(\mathbf{w}_j, \mathbf{q}_j)\|_{\infty} \leq C. 
		\end{equation}
		Combining \eqref{smooth minimax w zo bound} and \eqref{smooth minimax q zo bound}, we then have
		\begin{align}\label{smooth minimax zo bound}
			\mathbb{E}\left[\left\| \hat{\mathbf{g}}(\mathbf{w}_j, \mathbf{q}_j) \right\|_{*}^2 \right]&= \mathbb{E}\left[ 2D^2 \left\| \hat{\mathbf{g}}_w(\mathbf{w}_j, \mathbf{q}_j)\right\|_{w,*}^2 + 2 \ln m \left\| \mathbf{g}_q(\mathbf{w}_j, \mathbf{q}_j) \right\|_{\infty}^2 \right]\nonumber \\
			&\leq 2D^2\left[4\tau_1^2\tau_2^2(G^2+\sigma^2)d^2 + \frac{\tau_1^4\tau_2^2\mu_j^2 d^2 L^2}{2} \right] + 2C^2 \ln m \nonumber \\
			&= 8\tau_1^2\tau_2^2D^2(G^2+\sigma^2)d^2 + 2C^2 \ln m+ \tau_1^4\tau_2^2D^2 L^2d^2 \mu_j^2. 
		\end{align}
		Summing \eqref{smooth minimax split1} over $j = \lceil\frac{t}{2}\rceil,\cdots,t$, maximizing $\mathbf{x}$ and finally taking expectation over both sides, we have
		\begin{align}\label{smooth minimax split1*}
			&\mathbb{E} \left[ \max_{\mathbf{x} \in \mathcal{W} \times \Delta_m} \sum_{j=\lceil\frac{t}{2}\rceil}^{t} \eta_j \langle \hat{\mathbf{g}}(\mathbf{w}_j, \mathbf{q}_j), \mathbf{x}_j - \mathbf{x} \rangle \right] \nonumber \\
			\leq& \max_{\mathbf{x} \in \mathcal{W} \times \Delta_m} B(\mathbf{x}, \mathbf{x}_1) + \left[4\tau_1^2\tau_2^2(G^2+\sigma^2)D^2d^2 + C^2 \ln m\right] \sum_{j=\lceil\frac{t}{2}\rceil}^{t} \eta_j^2 + \frac{\tau_1^4\tau_2^2D^2L^2 d^2 }{2}\sum_{j=\lceil\frac{t}{2}\rceil}^{t}\mu_j^2 \eta_j^2 \nonumber \\
			\leq& 1 + \left[4\tau_1^2\tau_2^2(G^2+\sigma^2)D^2d^2 + C^2 \ln m \right]\sum_{j=\lceil\frac{t}{2}\rceil}^{t} \eta_j^2 + \frac{\tau_1^4\tau_2^2D^2 L^2d^2 }{2}\sum_{j=\lceil\frac{t}{2}\rceil}^{t}\mu_j^2 \eta_j^2.
		\end{align}
		where first inequality is by \eqref{smooth minimax zo bound} and the second inequality is by \eqref{B bound}.
		
		Next, we estimate the second term in the r.h.s of \eqref{smooth minimax split}. We make use of the ``ghost iterate" technique proposed by \citep{nemirovski2009robust} to solve this problem. Detailedly, we create a virtual sequence $\{\mathbf{y}_t\}$ by  performing ZO-SMD with $\mathbb{E}_{t-1}\left[\hat{\mathbf{g}}(\mathbf{w}_t, \mathbf{q}_t)\right] - \hat{\mathbf{g}}(\mathbf{w}_t, \mathbf{q}_t)$ as the gradient:
		\begin{equation}
			\mathbf{y}_{t+1} = \underset{\mathbf{x} \in \mathcal{W} \times \Delta_m}{\operatorname{argmin}} \left\{ \eta_t \left\langle\mathbb{E}_{t-1}\left[\hat{\mathbf{g}}(\mathbf{w}_t, \mathbf{q}_t)\right] - \hat{\mathbf{g}}(\mathbf{w}_t, \mathbf{q}_t), \mathbf{x} - \mathbf{y}_t \right\rangle + B(\mathbf{x}, \mathbf{y}_t) \right\},
		\end{equation}
		with $\mathbf{y}_1=\mathbf{x}_1$. Then we further decompose the error term as
		\begin{align}\label{smooth minimax split2}
			&\max_{\mathbf{x} \in \mathcal{W} \times \Delta_m} \sum_{j=\lceil\frac{t}{2}\rceil}^{t} \eta_j \left\langle \mathbb{E}_{j-1}[\hat{\mathbf{g}}(\mathbf{w}_j, \mathbf{q}_j)] - \hat{\mathbf{g}}(\mathbf{w}_j, \mathbf{q}_j), \mathbf{x}_j - \mathbf{x} \right\rangle \nonumber \\
			\leq& \max_{\mathbf{x} \in \mathcal{W} \times \Delta_m}\sum_{j=\lceil\frac{t}{2}\rceil}^{t} \eta_j \left\langle \mathbb{E}_{j-1}[\hat{\mathbf{g}}(\mathbf{w}_j, \mathbf{q}_j)] - \hat{\mathbf{g}}(\mathbf{w}_j, \mathbf{q}_j), \mathbf{y}_j - \mathbf{x} \right\rangle  \nonumber \\
			&+\sum_{j=\lceil\frac{t}{2}\rceil}^{t} \eta_j \left\langle \mathbb{E}_{j-1}[\hat{\mathbf{g}}(\mathbf{w}_j, \mathbf{q}_j)] - \hat{\mathbf{g}}(\mathbf{w}_j, \mathbf{q}_j), \mathbf{x}_j - \mathbf{y}_j \right\rangle.
		\end{align}
From the property of mirror descent, we have
		\begin{align}\label{smooth minimax split2.1}
			& \eta_j \left\langle \mathbb{E}_{j-1}[\hat{\mathbf{g}}(\mathbf{w}_j, \mathbf{q}_j)] - \mathbf{g}(\mathbf{w}_j, \mathbf{q}_j), \mathbf{y}_j - \mathbf{x} \right\rangle \nonumber \\
			\leq& B(\mathbf{x}, \mathbf{y}_j) - B(\mathbf{x}, \mathbf{y}_{j+1}) + \frac{\eta_j^2}{2} \left\| \mathbb{E}_{j-1}[\hat{\mathbf{g}}(\mathbf{w}_j, \mathbf{q}_j)] - \hat{\mathbf{g}}(\mathbf{w}_j, \mathbf{q}_j) \right\|_*^2.
		\end{align}
		According to the properties of expectation: $\left(\mathbb{E}\left[X\right]\right)^2 \leq \mathbb{E}\left[X^2\right]$ and $\|\mathbb{E}\left[X\right]\| \leq \mathbb{E}\left[\|X\|\right]$, and by \eqref{smooth minimax zo bound}, we have
		\begin{align}
			&\mathbb{E}\left[\left\| \mathbb{E}_{j-1}[\hat{\mathbf{g}}(\mathbf{w}_j, \mathbf{q}_j)] - \hat{\mathbf{g}}(\mathbf{w}_j,\mathbf{ q}_j) \right\|_*^2\right] \nonumber\\
			\leq& \mathbb{E}\left[2\left\| \mathbb{E}_{j-1}\left[\hat{\mathbf{g}}(\mathbf{w}_j, \mathbf{q}_j)\right] \right\|_*^2 + 2\left\| \hat{\mathbf{g}}(\mathbf{w}_j, \mathbf{q}_j) \right\|_*^2\right] \leq 4\mathbb{E}\left[\left\| \hat{\mathbf{g}}(\mathbf{w}_j, \mathbf{q}_j) \right\|_*^2\right] \nonumber\\
			\leq& 4\left[8\tau_1^2\tau_2^2(G^2+\sigma^2)D^2d^2 + 2C^2 \ln m+ \tau_1^4\tau_2^2D^2L^2 d^2 \mu_j^2\right],
		\end{align}
		Summing \eqref{smooth minimax split2.1} over $j = \lceil\frac{t}{2}\rceil,\cdots,t$, maximizing $\mathbf{x}$ and taking expectation over both sides, we have
		\begin{align}\label{smooth minimax split2.1*}
			&\mathbb{E}\left[\max_{\mathbf{x} \in \mathcal{W} \times \Delta_m}\sum_{j=\lceil\frac{t}{2}\rceil}^{t} \eta_j \left\langle \mathbb{E}_{j-1}[\hat{\mathbf{g}}(\mathbf{w}_j, \mathbf{q}_j)] - \mathbf{g}(\mathbf{w}_j, \mathbf{q}_j), \mathbf{x}_j - \mathbf{x}\right\rangle\right] \nonumber \\
			\leq& \max_{\mathbf{x} \in \mathcal{W} \times \Delta_m}B(\mathbf{x}, \mathbf{y}_j) + \frac{\eta_j^2}{2} \mathbb{E}\left[\left\| \mathbb{E}_{j-1}[\hat{\mathbf{g}}(\mathbf{w}_j, \mathbf{q}_j)] - \hat{\mathbf{g}}(\mathbf{w}_j, \mathbf{q}_j) \right\|_*^2\right] \nonumber \\
			\leq& 1+\left[16\tau_1^2\tau_2^2(G^2+\sigma^2)D^2d^2\eta_j^2 + 4C^2 \ln m\right]\sum_{j=\lceil\frac{t}{2}\rceil}^{t}\eta_j^2+2\tau_1^4\tau_2^2D^2L^2 d^2  \sum_{j=\lceil\frac{t}{2}\rceil}^{t}\mu_j^2 \eta_j^2.
		\end{align}
		On the other hand, we define
		\begin{equation}
			\delta_j=\eta_j \left\langle \mathbb{E}_{j-1}[\hat{\mathbf{g}}(\mathbf{w}_j, \mathbf{q}_j)] - \hat{\mathbf{g}}(\mathbf{w}_j, \mathbf{q}_j), \mathbf{x}_j - \mathbf{y}_j \right\rangle.\nonumber
		\end{equation}
		Since $\mathbf{x}_j$ and $\mathbf{y}_j$ are independent of the random samples used to construct $\hat{\mathbf{g}}(\mathbf{w}_j, \mathbf{q}_j)$, $\delta_{\lceil\frac{t}{2}\rceil},\cdots,\delta_t$ forms a martingale difference sequence, we have
		\begin{equation}\label{smooth minimax split2.2}
			\mathbb{E}\left[\sum_{j=\lceil\frac{t}{2}\rceil}^{t}\delta_j\right]=0.
		\end{equation}
		By plugging \eqref{smooth minimax split2.1*} and \eqref{smooth minimax split2.2} into \eqref{smooth minimax split2}, we have
		\begin{align}\label{smooth minimax split2*}
			& \mathbb{E}\left[ \max_{\mathbf{x} \in \mathcal{W} \times \Delta_m} \sum_{j=\lceil\frac{t}{2}\rceil}^{t} \eta_j \left\langle \mathbb{E}_{j-1}[\hat{\mathbf{g}}(\mathbf{w}_j, \mathbf{q}_j)] - \hat{\mathbf{g}}(\mathbf{w}_j, \mathbf{q}_j), \mathbf{x}_j - \mathbf{x}\right\rangle\right] \nonumber\\
			\leq& 1+\left[16\tau_1^2\tau_2^2(G^2+\sigma^2)D^2d^2\eta_j^2 + 4C^2 \ln m\right]\sum_{j=\lceil\frac{t}{2}\rceil}^{t}\eta_j^2+2\tau_1^4\tau_2^2D^2L^2 d^2  \sum_{j=\lceil\frac{t}{2}\rceil}^{t}\mu_j^2 \eta_j^2.
		\end{align}
	We proceed to decompose the optimization error as
		\begin{align}\label{smooth minimax split3}
			&\eta_j \left\langle F(\mathbf{w}_j, \mathbf{q}_j) - \mathbb{E}_{j-1} [\hat{\mathbf{g}}(\mathbf{w}_j, \mathbf{q}_j)], \mathbf{x}_j - \mathbf{x} \right\rangle \nonumber \\
			=& \eta_j \left\langle \left[ \sum_{i=1}^{m} q_{j,i} \left[\nabla R_i(\mathbf{w}_j) - \mathbb{E}_{j-1}\left[\hat{\nabla} \ell(\mathbf{w}_j; \mathcal{P}_i)\right] \right], -\left[ R_1(\bar{\mathbf{w}}_j^{(1)}) - R_1^*, \ldots, R_m(\bar{\mathbf{w}}_j^{(m)}) - R_m^* \right]^\top \right], \mathbf{x}_j - \mathbf{x}\right\rangle \nonumber \\
			=& \eta_j \left\langle \sum_{i=1}^{m} q_{j,i} \left[\nabla R_i(\mathbf{w}_j) - \mathbb{E}_{j-1}\left[\hat{\nabla} \ell(\mathbf{w}_j; \mathcal{P}_i)\right] \right], \mathbf{w}_j - \mathbf{w} \right\rangle \nonumber \\
			&- \eta_j \left\langle \left[ R_1(\bar{\mathbf{w}}_j^{(1)}) - R_1^*, \cdots, R_m(\bar{\mathbf{w}}_j^{(m)}) - R_m^* \right]^\top, \mathbf{q}_j - \mathbf{q} \right\rangle.
		\end{align}
		Firstly, summing the first term in the r.h.s of \eqref{smooth minimax split3} over $j = \lceil\frac{t}{2}\rceil,\cdots,t$, maximizing $\mathbf{w}$ and taking expectation over both sides,  by \eqref{smooth min split2} we have
		\begin{align}\label{smooth minimax split3.1}
			&\mathbb{E} \left[\max_{\mathbf{w} \in \mathcal{W}} \sum_{j=\lceil\frac{t}{2}\rceil}^{t} \eta_j \left\langle \sum_{i=1}^{m} q_{j,i} \left[ \nabla R_i(\mathbf{w}_j) - \mathbb{E}_{j-1}\left[\hat{\nabla} \ell(\mathbf{w}_j; \mathcal{P}_i)\right] \right], \mathbf{w}_j - \mathbf{w} \right\rangle \right] \nonumber \\
			\leq& \sum_{i=1}^{m}  \sum_{j=\lceil\frac{t}{2}\rceil}^{t} q_{j,i} \frac{\tau_1^2\mu_j dL}{2}\eta_j\left\|v(\mu_j,\mathbf{w}_j\right\|_{w,*}\max_{\mathbf{w} \in \mathcal{W}}\sqrt{2B_w\left(\mathbf{w}_j,\mathbf{w}\right)} \nonumber \\
			\leq& \sum_{i=1}^{m}  \left[ \frac{\sqrt{2}\tau_1^2\tau_2DdL}{2}\sum_{j=\lceil\frac{t}{2}\rceil}^{t}q_{j,i}\mu_j\eta_j \right]
			= \frac{\sqrt{2}\tau_1^2\tau_2DdL}{2}\sum_{j=\lceil\frac{t}{2}\rceil}^{t}\mu_j\eta_j.
		\end{align}
		Next, we estimate the second term in the r.h.s of \eqref{smooth minimax split3}. Recall the result in Theorem \ref{thm:2} , we deduce the common bound of $\mathbb{E} \left[R_i(\bar{\mathbf{w}}_j^{(i)}) - R_i^*\right]$ for all $i \sim [m]$. Thus, we have
		\begin{equation}
			 \mathbb{E} \left[\max_{i \in [m]} \left\{ R_i(\bar{\mathbf{w}}_j^{(i)}) - R_i^* \right\}\right] \leq \frac{4\sqrt{2}\tau_1\tau_2d\left[D^2+\left(G^2+\sigma^2+D\right)\ln 3+\frac{3}{8}\right]}{\sqrt{j+2}}. \nonumber
		\end{equation}
		Maximizing $\mathbf{q}$ and taking expectation over the second term in the r.h.s of \eqref{smooth minimax split3} and by the definition of the dual norm, we have
		\begin{align}\label{smooth minimax split3.2}
			&\mathbb{E} \left[\max_{\mathbf{q} \in \Delta_m} \sum_{j=1}^{t} -\eta_j \left\langle \left[ R_1(\bar{\mathbf{w}}_j^{(1)}) - R_1^*, \cdots, R_m(\bar{\mathbf{w}}_j^{(m)}) - R_m^* \right]^\top, \mathbf{q}_j - \mathbf{q} \right\rangle\right] \nonumber \\
			\leq& \sum_{j=\lceil\frac{t}{2}\rceil}^{t} \eta_j\mathbb{E} \left[\left\|\left[ R_1(\bar{\mathbf{w}}_j^{(1)}) - R_1^*, \cdots, R_m(\bar{\mathbf{w}}_j^{(m)}) - R_m^* \right]\right\|_{\infty}\right]\max_{\mathbf{q} \in \Delta_m}\|\mathbf{q}_j - \mathbf{q}\|_{1}  \nonumber \\
			\leq& \sum_{j=\lceil\frac{t}{2}\rceil}^{t} 2\eta_j \mathbb{E} \left[\max_{i \in [m]} \left\{ R_i(\bar{\mathbf{w}}_j^{(i)}) - R_i^* \right\}\right] \nonumber\\
			\leq& \sum_{j=\lceil\frac{t}{2}\rceil}^{t} \frac{8\sqrt{2}\tau_1\tau_2d\left[D^2+\left(G^2+\sigma^2+D\right)\ln 3+\frac{3}{8}\right]}{\sqrt{j+2}}\eta_j.
		\end{align}
		Then by combining \eqref{smooth minimax split3.1} and \eqref{smooth minimax split3.2}, we have
		\begin{align}\label{smooth minimax split3*}
			&\mathbb{E}\left[ \max_{\mathbf{x} \in \mathcal{W} \times \Delta_m} \sum_{j=\lceil\frac{t}{2}\rceil}^{t} \eta_j \left\langle F(\mathbf{w}_j, \mathbf{q}_j) - \mathbb{E}_{t-1}[\hat{\mathbf{g}}(\mathbf{w}_j, \mathbf{q}_j)], \mathbf{x}_j - \mathbf{x} \right\rangle \right]
			\nonumber \\
			\leq& \frac{\sqrt{2}\tau_1^2\tau_2DdL}{2}\sum_{j=\lceil\frac{t}{2}\rceil}^{t}\mu_j\eta_j+\sum_{j=\lceil\frac{t}{2}\rceil}^{t} \frac{8\sqrt{2}\tau_1\tau_2d\left[D^2+\left(G^2+\sigma^2+D\right)\ln 3+\frac{3}{8}\right]}{\sqrt{j+2}}\eta_j. 
		\end{align}
		Finally, taking expectation over \eqref{smooth minimax split} and plugging \eqref{smooth minimax split1*}, \eqref{smooth minimax split2*} and \eqref{smooth minimax split3*} into \eqref{smooth minimax split}, we have
		\begin{align}\label{smooth minimax simplify}
			&\mathbb{E}\left[ \max_{\mathbf{q} \in \Delta_m} \phi(\bar{\mathbf{w}}_t, \mathbf{q}) - \min_{\mathbf{w} \in \mathcal{W}} \phi(\mathbf{w}, \bar{\mathbf{q}}_t) \right] \nonumber \\
			\leq& \left( \sum_{j=\lceil\frac{t}{2}\rceil}^{t} \eta_j \right)^{-1} \left\{ 2 + \left[20\tau_1^2\tau_2^2(G^2+\sigma^2)D^2d^2 + 5C^2 \ln m \right]\sum_{j=\lceil\frac{t}{2}\rceil}^{t} \eta_j^2 + \frac{5\tau_1^4\tau_2^2D^2 L^2d^2 }{2}\sum_{j=\lceil\frac{t}{2}\rceil}^{t}\mu_j^2 \eta_j^2 \right.\nonumber\\
			&\left.+\frac{\sqrt{2}\tau_1^2\tau_2DdL}{2}\sum_{j=\lceil\frac{t}{2}\rceil}^{t}\mu_j\eta_j+\sum_{j=\lceil\frac{t}{2}\rceil}^{t} \frac{8\sqrt{2}\tau_1\tau_2d\left[D^2+\left(G^2+\sigma^2+D\right)\ln 3+\frac{3}{8}\right]}{\sqrt{j+2}}\eta_j \right\}.
		\end{align}
		By setting $\eta_j=\frac{1}{\sqrt{2}\tau_1\tau_2d\sqrt{j+1}}$ and $\mu_j=\frac{2}{\tau_1L\sqrt{j+1}}$, we can transform \eqref{smooth minimax simplify} into
		\begin{align}\label{smooth minimax simplify*}
			&\mathbb{E}\left[ \max_{\mathbf{q} \in \Delta_m} \phi(\bar{\mathbf{w}}_t, \mathbf{q}) - \min_{\mathbf{w} \in \mathcal{W}} \phi(\mathbf{w}, \bar{\mathbf{q}}_t) \right] \nonumber \\
			\leq& \left( \sum_{j=\lceil\frac{t}{2}\rceil}^{t} \frac{1}{\sqrt{2}\tau_1\tau_2d\sqrt{j+1}} \right)^{-1}
			\left\{ 2 + \frac{20\tau_1^2\tau_2^2(G^2+\sigma^2)D^2d^2 + 5C^2 \ln m}{2\tau_1^2\tau_2^2d^2} \sum_{j=\lceil\frac{t}{2}\rceil}^{t} \frac{1}{ j+1}\right.\nonumber \\ 
			&\left.+5D^2\sum_{j=\lceil\frac{t}{2}\rceil}^{t}\frac{1}{(j+1)^2}+ D\sum_{j=\lceil\frac{t}{2}\rceil}^{t}\frac{1}{j+1}+\sum_{j=\lceil\frac{t}{2}\rceil}^{t} \frac{8\left[D^2+\left(G^2+\sigma^2+D\right)\ln 3+\frac{3}{8}\right]}{\sqrt{(j+2)(j+1)}} \right\}.
		\end{align}
		By the inequality
		\begin{equation}\label{inequality4}
			\sqrt{(j+2)(j+1)} \geq j+1,
		\end{equation}
		and plugging \eqref{inequality1}, \eqref{inequality2} and \eqref{inequality3} into \eqref{smooth minimax simplify*}, we have
		\begin{align}
			&\mathbb{E}\left[ \max_{\mathbf{q} \in \Delta_m} \phi(\bar{\mathbf{w}}_t, \mathbf{q}) - \min_{\mathbf{w} \in \mathcal{W}} \phi(\mathbf{w}, \bar{\mathbf{q}}_t) \right] \nonumber \\
			\leq& \frac{4\sqrt{2}\tau_1\tau_2d}{\sqrt{t + 2}}\left\{2+\frac{15D^2}{4}+ \left\{D+\frac{20\tau_1^2\tau_2^2(G^2+\sigma^2)D^2d^2 + 5C^2 \ln m}{2\tau_1^2\tau_2^2d^2} \right.\right. \nonumber \\
			&\left.\left. +8\left[D^2+\left(G^2+\sigma^2+D\right)\ln 3+\frac{3}{8}\right]\right\}\ln 3\right\} \nonumber\\
			= & \mathcal{O}\left( \frac{1}{\sqrt{t}} \right),
		\end{align}
		which completes the proof.
	\end{proof}
	Theorem \ref{thm:3} demonstrates that by the partial averaging iterates, ZO-SMD converges at the order of $\mathcal{O}\left( 1/\sqrt{t}\right)$ in solving smooth MERO. Combining with the result of Theorem \ref{thm:2}, we can obtain the total complexity is bounded by $\mathcal{O}\left( 1/\sqrt{t}\right) \times \mathcal{O}\left( 1/\sqrt{t}\right) =\mathcal{O}\left( 1/t\right)$, which reaches the optimal convergence rate of zeroth-order algorithms for smooth saddle-point problems proved in \citep{sadiev2021zeroth}. 
	\subsection{ZO-SMD for Non-smooth MERO}\label{3.2}
	In this section, we propose a ZO-SMD algorithm to solve MERO under non-smooth case and analyze the corresponding iteration complexity. We will maintain the notations of $\bar{\mathbf{w}}_t^{(i)}$, $\bar{\mathbf{w}}_t$, $\bar{\mathbf{q}}_t$ and $\mathbf{g}_q(\mathbf{w}_t, \mathbf{q}_t)$ mentioned in Section \ref{3.1}. 
	
	Similarly, we firstly estimate the value of $R_i^*$ in \eqref{mero2} under non-smooth case by using the zeroth-order stochastic mirror descent (ZO-SMD) algorithm. By \eqref{non-smooth unige}, we substitute $\hat{\nabla}\ell(\mathbf{w}_t^{(i)}; \mathcal{P}_{i})$ in \eqref{smooth unige of loss function} for $\hat{\nabla}_{\text{ns}} \ell(\mathbf{w}_t^{(i)}; \mathcal{P}_{i})$, which is defined as
	\begin{align}\label{non-smooth unige of loss function}
		\hat{\nabla}_{\text{ns}} \ell(\mathbf{w}_t^{(i)}; \mathcal{P}_{i})& = \frac{1}{r} \sum_{j=1}^{r} \hat{\nabla}_{\text{ns}} \ell(\mathbf{w}_t^{(i)}; \mathbf{z}_{t,i}^{(j)}) \nonumber\\
		&= \frac{1}{r} \sum_{j=1}^{r} \frac{\ell(\mathbf{w}_t^{(i)} + \mu_{1,t} \mathbf{u}_j+\mu_{2,t} \mathbf{v}_j; \mathbf{z}_{t,i}^{(j)}) - \ell(\mathbf{w}_t^{(i)}+\mu_{1,t} \mathbf{u}_j; \mathbf{z}_{t,i}^{(j)})}{\mu_{2,t}} \mathbf{v}_j, 
	\end{align}
	for all $i \in [m]$, where $\mathbf{u}_j, \mathbf{v}_j \in \mathbb{R}^d$ are random vectors drawn from one of three pairs of distributions mentioned in Section \ref{2.2}, $\{\mu_{1,t}\}_{t=1}^{\infty}$ and $\{\mu_{2,t}\}_{t=1}^{\infty}$ are non-increasing sequences of positive
	smoothing parameters. The non-smooth ZO-SMD algorithm update for estimating $R_i^*$ for all $i \in [m]$ is given by
	\begin{equation}\label{ZO-SMD for non-smooth min}
		\mathbf{w}_{t+1}^{(i)} = \underset{\mathbf{w} \in \mathcal{W}}{\operatorname{argmin}} \left\{ \eta_t^{(i)} \left\langle \hat{\nabla}_{\textnormal{ns}} \ell(\mathbf{w}_t^{(i)}; \mathcal{P}_{i}), \mathbf{w} - \mathbf{w}_t^{(i)} \right\rangle + B_{w}(\mathbf{w}, \mathbf{w}_t^{(i)}) \right\}, 
	\end{equation}
	where $\eta_t^{(i)} > 0$ is the step size. Smilarly, the partial averaging value $R_i(\bar{\mathbf{w}}_t^{(i)})$ is used as an approximate solution for $\min_{\mathbf{w} \in \mathcal{W}} R_i(\mathbf{w})$, and $\bar{\mathbf{w}}_t^{(i)}$ is defined as:
	\begin{equation}\label{weighted value for non-smooth min}
		\bar{\mathbf{w}}_t^{(i)}= \sum_{j=\lceil\frac{t}{2}\rceil}^{t} \frac{\eta_j^{(i)} \mathbf{w}_j^{(i)}}{\sum_{k=\lceil\frac{t}{2}\rceil}^{t} \eta_k^{(i)}},
	\end{equation}
	where $\lceil\cdot\rceil$ is the ceiling function.
	
	We proceed to minimize \eqref{mero2} by non-smooth ZO-SMD. Similarly, we define the zeroth-order gradient of $\phi(\mathbf{w},\mathbf{q})$ w.r.t. $\mathbf{w}$ in the $t$-th round as
	\begin{align}\label{gradient_w for non-smooth minimax}
		\hat{\mathbf{g}}_{w}^*(\mathbf{w}_t, \mathbf{q}_t) &= \sum_{i=1}^{m} q_{t,i} \hat{\nabla}_{\text{ns}}\ell(\mathbf{w}_{t}; \mathcal{P}_i) \nonumber\\
		&= \sum_{i=1}^{m} q_{t,i}\frac{1}{r} \sum_{j=1}^{r} \frac{\ell(\mathbf{w}_t^{(i)} + \mu_{1,t} \mathbf{u}_j+\mu_{2,t} \mathbf{v}_j; \mathbf{z}_{t,i}^{(j)}) - \ell(\mathbf{w}_t^{(i)}+\mu_{1,t} \mathbf{u}_j; \mathbf{z}_{t,i}^{(j)})}{\mu_{2,t}} \mathbf{v}_j.
	\end{align}
	We maintain the stochastic gradient of $\hat{\phi}(\mathbf{w},\mathbf{q})$ w.r.t. $\mathbf{q}$ in the $t$-th round as:
	\begin{equation}\label{gradient_q for non-smooth minimax}
		\mathbf{g}_q(\mathbf{w}_t, \mathbf{q}_t) = \left[\frac{1}{r} \sum_{j=1}^{r}\left[\ell(\mathbf{w}_t; \mathbf{z}_{t,1}^{(j)}) - \ell(\bar{\mathbf{w}}_t^{(1)},\mathbf{z}_{t,1}^{(j)})\right], \cdots, \frac{1}{r} \sum_{j=1}^{r}\left[\ell(\mathbf{w}_t; \mathbf{z}_{t,m}^{(j)}) - \ell(\bar{\mathbf{w}}_t^{(m)},\mathbf{z}_{t,m}^{(j)})\right] \right]^\top.
	\end{equation}
	Finally, equipped with the zeroth-order gradient and the stochastic gradient in \eqref{gradient_w for non-smooth minimax} and \eqref{gradient_q for non-smooth minimax},  non-smooth ZO-SMD updates $\mathbf{w}_t$ and $\mathbf{q}_t$ as:
	\begin{align}
		\mathbf{w}_{t+1}&=\underset{\mathbf{w} \in \mathcal{W}}{\operatorname{argmin}}\left\{\eta_{t}^{w}\langle \hat{\mathbf{g}}_{w}^*(\mathbf{w}_t,\mathbf{q}_t),\mathbf{w}-\mathbf{w}_{t}\rangle + B_w(\mathbf{w}, \mathbf{w}_{t})\right\},\label{ZO-SMD w for non-smooth minimax}\\
		\mathbf{q}_{t+1}&=\underset{\mathbf{q} \in \Delta{m}}{\operatorname{argmin}}\left\{\eta_{t}^{q}\langle -\mathbf{g}_{q}(\mathbf{w}_t,\mathbf{q}_t),\mathbf{q}-\mathbf{q}_{t}\rangle + B_q(\mathbf{q}, \mathbf{q}_{t})\right\},\label{ZO-SMD q for non-smooth minimax}
	\end{align}
	where $\eta_{t}^{w} > 0$ and $\eta_{t}^{q} > 0$ are step sizes. We will maintain the partial averaging value of iterates:
	\begin{equation}\label{weighted value for non-smooth minimax}
		\bar{\mathbf{w}}_t = \sum_{j=\lceil\frac{t}{2}\rceil}^{t}\frac{\eta_j^{w} \mathbf{w}_j}{\sum_{k=\lceil\frac{t}{2}\rceil}^{t} \eta_k^{w}},\quad \bar{\mathbf{q}}_t = \sum_{j=\lceil\frac{t}{2}\rceil}^{t}\frac{\eta_j^{q} \mathbf{q}_j}{\sum_{k=\lceil\frac{t}{2}\rceil}^{t} \eta_k^{q}}.
	\end{equation}
The detailed algorithm is presented as in Algorithm \ref{alg:2}. 
	\begin{algorithm}[H]
		\caption{Zeroth-Order Stochastic Mirror Descent Algorithm: Non-smooth Case}\label{alg:2}
		\begin{algorithmic}[1]
			\State Initialize $\mathbf{w}_1 = \textbf{w}_1^{(1)} =\cdots= \mathbf{w}_1^{(m)} = {\operatorname{argmin}}_{\mathbf{w}\in \mathcal{W}} v_w(\mathbf{w})$, and $\mathbf{q}_1 = \frac{1}{m}\mathbf{1}_m \in \mathbb{R}^m$.
			\For{$t = 1$ to $T$}
			\For{$\forall$ $i \in [m]$}
			\State  Draw $r$ random samples $ \mathbf{z}_{t,i}^{(1)},\cdots, \mathbf{z}_{t,i}^{(r)}$ from $\mathcal{P}_i$.
			\EndFor
			\For{$\forall$ $i \in [m]$}
			\State Calculate $\hat{\nabla}_{\text{ns}} \ell (\mathbf{w}_t^{(i)}; \mathcal{P}_i)$ according to \eqref{non-smooth unige of loss function} and update $\mathbf{w}_t^{(i)}$ according to \eqref{ZO-SMD for non-smooth min}.
			\EndFor
			\For{$\forall$ $i \in [m]$}
			\State Calculate the weighted average $\bar{\mathbf{w}}_t^{(i)}$ in \eqref{weighted value for non-smooth min}.
			\EndFor
			\State Calculate $\hat{\mathbf{g}}_w(\mathbf{w}_t,\mathbf{q}_t)$ and $\mathbf{g}_q(\mathbf{w}_t,\mathbf{q}_t)$ according to \eqref{gradient_w for non-smooth minimax} and \eqref{gradient_q for non-smooth minimax}.
			\State Update $\mathbf{w}_t$ and $\mathbf{q}_t$ according to \eqref{ZO-SMD w for non-smooth minimax} and \eqref{ZO-SMD q for non-smooth minimax}.
			\State Calculate the weighted average $\bar{\mathbf{w}}_t$ and $\bar{\mathbf{q}}_t$ in \eqref{weighted value for non-smooth minimax}.
			\EndFor
		\end{algorithmic}
	\end{algorithm}
	\subsubsection{Technical Preparation}
	In this section, we analyze the iteration complexity of the zeroth-order stochasitc mirror descent (ZO-SMD) algorithm for solving non-smooth MERO. Instead of the Lipschitz continuous gradient, we make the following assumption for $\ell(\mathbf{w};\mathbf{z})$.
	\begin{assumption}\label{assum:5}
		For all $i \in [m]$, $\ell(\mathbf{w};\mathbf{z})$ is Lipschitz continuous w.r.t. the $\ell_2$-norm, i.e., there exists a constant $L^*>0$ such that for any $\mathbf{w}_1, \mathbf{w}_2\in \mathcal{W}$ and $\mathbf{z} \sim \mathcal{P}_i$, we have
		\begin{equation}
			\|\ell(\mathbf{w}_1;\mathbf{z})-\ell(\mathbf{w}_2;\mathbf{z})\|_2 \leq L^*\|\mathbf{w}_1-\mathbf{w}_2\|_2. \nonumber
		\end{equation}
		Moreover, $R_i(\mathbf{w})$ is Lipschitz continuous with constant $L^*$.
	\end{assumption}
By Assumption \ref{assum:5}, we can deduce that the $\ell_2$-norm of $\nabla\ell(\mathbf{w};\mathbf{z})$ is bounded by $L^*$.
	\begin{lemma}\label{lem:4}{\textnormal{\textbf{(Lemma 4 in \citep{duchi2015optimal})}}}
		Let the random vector $\mathbf{u}$ be distributed as $\mathcal{N}(0, I_{d \times d})$, uniformly on the $\ell_2$-ball of radius $\sqrt{d + 2}$, or uniformly on the $\ell_2$-sphere of radius $\sqrt{d}$. For any $k \in \mathbb{N}$, there is a constant $c_k$ (dependent only on $k$) such that
		\[
		\mathbb{E}\left[ \|\mathbf{u}\|_2^k \right] \leq c_k d^{k/2}.
		\]
		In all cases we have $\mathbb{E}[\mathbf{u}\mathbf{u}^\top] = I_{d \times d}$, and $c_k \leq 3$ for $k = 4$ and $c_k \leq \sqrt{3}$ for $k = 3$.
	\end{lemma}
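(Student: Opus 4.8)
The plan is to handle the three admissible distributions through their common rotational invariance and to reduce every claim to the scalar radial moments $\mathbb{E}[\|\mathbf{u}\|_2^k]$. First I would establish the covariance identity $\mathbb{E}[\mathbf{u}\mathbf{u}^\top]=I_{d\times d}$: since each of the three laws is invariant under orthogonal transformations, for any orthogonal $Q$ the vector $Q\mathbf{u}$ has the same law as $\mathbf{u}$, so $\mathbb{E}[\mathbf{u}\mathbf{u}^\top]=Q\,\mathbb{E}[\mathbf{u}\mathbf{u}^\top]\,Q^\top$ for all $Q$; hence $\mathbb{E}[\mathbf{u}\mathbf{u}^\top]=cI_{d\times d}$ for a scalar $c$, and taking the trace gives $c=\tfrac1d\mathbb{E}[\|\mathbf{u}\|_2^2]$. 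It then suffices to check $\mathbb{E}[\|\mathbf{u}\|_2^2]=d$ in each case, which I do next together with the higher moments.

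For the moment bounds I would compute $\mathbb{E}[\|\mathbf{u}\|_2^k]$ radially. On the $\ell_2$-sphere of radius $\sqrt d$ the norm is deterministic, $\|\mathbf{u}\|_2^2=d$, so $\mathbb{E}[\|\mathbf{u}\|_2^k]=d^{k/2}$ and $c_k=1$ works; in particular $\mathbb{E}[\|\mathbf{u}\|_2^2]=d$. On the ball of radius $R=\sqrt{d+2}$ the radial density is $f(\rho)=d\rho^{d-1}/R^{d}$ on $[0,R]$, giving the closed form $\mathbb{E}[\|\mathbf{u}\|_2^k]=\tfrac{d}{d+k}R^{k}=\tfrac{d}{d+k}(d+2)^{k/2}$; for $k=2$ this is exactly $d$, which both fixes $c=1$ above and explains the choice of radius. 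For the Gaussian law $\|\mathbf{u}\|_2^2$ is chi-squared with $d$ degrees of freedom, so $\mathbb{E}[\|\mathbf{u}\|_2^2]=d$ and $\mathbb{E}[\|\mathbf{u}\|_2^4]=\mathbb{E}[(\sum_i u_i^2)^2]=3d+d(d-1)=d(d+2)$. Existence of a finite $c_k$ for every $k$ is then immediate: in the two bounded cases $\mathbb{E}[\|\mathbf{u}\|_2^k]\le(d+2)^{k/2}\le 3^{k/2}d^{k/2}$ for $d\ge1$, while in the Gaussian case the chi moments are finite and scale like $d^{k/2}$.

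It remains to pin down the sharp constants. For $k=4$ I would bound each fourth moment against $3d^2$ directly: the sphere gives $d^2$; the Gaussian gives $d(d+2)\le 3d^2$ for $d\ge1$; and the ball gives $\tfrac{d}{d+4}(d+2)^2\le 3d^2$, which rearranges to $d^2+4d-2\ge0$ and holds for all $d\ge1$. For $k=3$ the cleanest route, which avoids the half-integer (Gamma-function) Gaussian moment, is Cauchy--Schwarz: $\mathbb{E}[\|\mathbf{u}\|_2^3]\le\bigl(\mathbb{E}[\|\mathbf{u}\|_2^2]\,\mathbb{E}[\|\mathbf{u}\|_2^4]\bigr)^{1/2}$. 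Plugging in the second and fourth moments just computed yields $d\sqrt{d+2}\le\sqrt3\,d^{3/2}$ for the Gaussian and $\tfrac{d(d+2)}{\sqrt{d+4}}\le\sqrt3\,d^{3/2}$ for the ball, each equivalent to an inequality already verified ($d+2\le 3d$, resp. $d^2+4d-2\ge0$), while the sphere gives $d^{3/2}$. Hence $c_3\le\sqrt3$.

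The main obstacle I expect is not any single computation but making the constants uniform over all dimensions $d\ge1$ and all three laws simultaneously: the crude containment $\|\mathbf{u}\|_2\le\sqrt{d+2}$ on the ball is too lossy to give $c_4\le3$ at small $d$ (it fails already at $d=1,2$), so one really needs the exact radial-moment formula there, and the Gaussian $k=3$ moment is awkward to evaluate in closed form, which is why I would route it through Cauchy--Schwarz rather than through Gamma functions.
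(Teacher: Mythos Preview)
Your proposal is correct. The paper does not give its own proof of this lemma: it is simply quoted verbatim as Lemma~4 of \cite{duchi2015optimal} and then used, so there is nothing in the paper to compare your argument against. Your rotational-invariance reduction to radial moments, the explicit moment computations for the sphere, ball, and Gaussian cases, and the Cauchy--Schwarz trick to handle $k=3$ without Gamma functions are all sound and yield exactly the stated constants; in particular your checks that $d(d+2)\le 3d^2$ and $(d+2)^2\le 3d(d+4)$ for $d\ge 1$ are what pin down $c_4\le 3$ and, via Cauchy--Schwarz, $c_3\le\sqrt{3}$.
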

	Lemma \ref{lem:4} combines the three cases mentioned in the definition of non-smooth directional gradient estimator in Section \ref{2.2} into one, providing a unified upper bound, thereby eliminating the need for subsequent classification discussions in the following analysis.
	\begin{lemma}\label{lem:5}{\textnormal{\textbf{(Lemma 2 in \citep{duchi2015optimal})}}}
		 Denote ${R_i}_{\mu}(\mathbf{w})=\mathbb{E}_{\mathbf{u}}\left[R_i\left(\mathbf{w} + \mu \mathbf{u}\right)\right] $. If Assumption \ref{assum:5} holds, for all $i \in [m]$ and any $\mathbf{w} \in \mathcal{W}$, we have
		\begin{align}\label{lem5.1}
			\mathbb{E}\left[\hat{\nabla}_{\textnormal{ns}}\ell(\mathbf{w};\mathbf{z})\right] &= \nabla {R_i}_{\mu_1}(\mathbf{w}) + \frac{\mu_2}{\mu_1} L^* v(\mu_1, \mu_2, \mathbf{w}), 
		\end{align}
		where $v = v(\mu_1, \mu_2, \mathbf{w})$ is bounded by $\|v\|_2 \leq \frac{1}{2} \mathbb{E}[\|\mathbf{v}_i\|_2^3]$. There exists a constant $c$ such that
		\begin{align}\label{lem5.2}
			\mathbb{E}\left[ \left\|\hat{\nabla}_{\textnormal{ns}}\ell(\mathbf{w};\mathbf{z})\right\|_2^2 \right] &\leq c {L^*}^2 d \left( \sqrt{\frac{\mu_2}{\mu_1}} d + 1 + \log d \right).
		\end{align}
	\end{lemma}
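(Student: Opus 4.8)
The plan is to prove both assertions for a fixed realization of $\mathbf{z}$, writing $F(\cdot)=\ell(\cdot;\mathbf{z})$ and $F_{\mu_1}(\mathbf{w})=\mathbb{E}_{\mathbf{u}}[F(\mathbf{w}+\mu_1\mathbf{u})]$, and only at the very end taking the expectation over $\mathbf{z}\sim\mathcal{P}_i$. Because $F_{\mu_1}$ and its gradient are linear in $F$ while $R_i=\mathbb{E}_{\mathbf{z}}[\ell(\cdot;\mathbf{z})]$, this last step merely replaces $F_{\mu_1}$ and $\nabla F_{\mu_1}$ by $R_{i,\mu_1}$ and $\nabla R_{i,\mu_1}$ (and uses Jensen for the squared bound), and every pathwise Lipschitz estimate is uniform in $\mathbf{z}$ by Assumption \ref{assum:5}. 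The decisive structural fact I would exploit throughout is that, although $F$ is merely Lipschitz, the inner perturbation $\mu_1\mathbf{u}$---whose law has a density on $\mathbb{R}^d$ in all three admissible pairs---makes $F_{\mu_1}$ continuously differentiable with $\|\nabla F_{\mu_1}\|_2\le L^*$ and with an $O(L^*/\mu_1)$-Lipschitz gradient; this smoothing gain is exactly what lets me import the smooth-case analysis.

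For the mean identity \eqref{lem5.1} I would take the joint expectation over $\mathbf{u},\mathbf{v}$ in \eqref{non-smooth unige of loss function} and exchange the order of integration, turning the finite difference based at the random point $\mathbf{w}+\mu_1\mathbf{u}$ into one based at $\mathbf{w}$ for the smoothed function: $\mathbb{E}_{(\mathbf{u},\mathbf{v})}[\hat{\nabla}_{\text{ns}}F]=\tfrac{1}{\mu_2}\mathbb{E}_{\mathbf{v}}[(F_{\mu_1}(\mathbf{w}+\mu_2\mathbf{v})-F_{\mu_1}(\mathbf{w}))\mathbf{v}]$, where $\mathbb{E}_{\mathbf{v}}[\mathbf{v}]=0$ kills the subtracted term. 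A first-order Taylor expansion of the now-differentiable $F_{\mu_1}$, with remainder controlled by its $O(L^*/\mu_1)$-Lipschitz gradient, together with $\mathbb{E}[\mathbf{v}\mathbf{v}^\top]=I$ from Lemma \ref{lem:4}, yields $\nabla F_{\mu_1}(\mathbf{w})$ as the leading term and a residual of norm at most $\tfrac{\mu_2}{2\mu_1}L^*\,\mathbb{E}[\|\mathbf{v}\|_2^3]$; this is precisely the error vector $\tfrac{\mu_2}{\mu_1}L^*v$ with $\|v\|_2\le\tfrac12\mathbb{E}[\|\mathbf{v}\|_2^3]$ claimed in \eqref{lem5.1}.

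For the second-moment bound \eqref{lem5.2} I would start from $\mathbb{E}\|\hat{\nabla}_{\text{ns}}F\|_2^2=\tfrac{1}{\mu_2^2}\mathbb{E}[(F(\mathbf{w}+\mu_1\mathbf{u}+\mu_2\mathbf{v})-F(\mathbf{w}+\mu_1\mathbf{u}))^2\|\mathbf{v}\|_2^2]$ and observe that the crude Lipschitz increment bound $|F(\mathbf{w}+\mu_1\mathbf{u}+\mu_2\mathbf{v})-F(\mathbf{w}+\mu_1\mathbf{u})|\le L^*\mu_2\|\mathbf{v}\|_2$ only gives $\|\hat{\nabla}_{\text{ns}}F\|_2\le L^*\|\mathbf{v}\|_2^2$, hence $O(L^{*2}d^2)$ via $\mathbb{E}[\|\mathbf{v}\|_2^4]\le c_4 d^2$ from Lemma \ref{lem:4}. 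Obtaining the sharper form requires decomposing the estimator as $\hat{\nabla}_{\text{ns}}F=\langle\nabla F_{\mu_1}(\mathbf{w}),\mathbf{v}\rangle\mathbf{v}+T_2$: the first, ``directional-derivative'' piece is controlled by the rotationally invariant identity $\mathbb{E}[\langle a,\mathbf{v}\rangle^2\|\mathbf{v}\|_2^2]=O(\|a\|_2^2 d)$ (exact in the Gaussian and sphere cases, and bounded in the ball case through the deterministic radius and Lemma \ref{lem:4}) together with $\|\nabla F_{\mu_1}\|_2\le L^*$, producing the term of order $L^{*2}d$. The residual $T_2$, which measures the gap between the finite difference at the random base point and the smoothed directional derivative, is where I expect the real work: bounding it by combining the Lipschitz increment estimate with the $O(L^*/\mu_1)$ smoothness of $F_{\mu_1}$ and the moment bounds $\mathbb{E}[\|\mathbf{v}\|_2^k]\le c_k d^{k/2}$ is what yields the finite-difference penalty carrying the factor $\sqrt{\mu_2/\mu_1}\,d^2$, the logarithmic factor arising in the moment estimates for the admissible smoothing distributions. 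This residual estimate is the main obstacle, since the square-root scaling must be extracted by trading the uniform bound on the increment against its average smoothness exactly as in the computation of Lemma 2 of \citep{duchi2015optimal}; I would follow that argument and then pass to $\mathbb{E}_{\mathbf{z}}[\cdot]$ to obtain \eqref{lem5.2}.
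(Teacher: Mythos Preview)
The paper does not prove this lemma at all: it is quoted verbatim as Lemma~2 of \citet{duchi2015optimal} and used as a black box, so there is no ``paper's own proof'' to compare against. Your outline is essentially the argument of that reference---reduce to a fixed sample, use the inner randomization $\mu_1\mathbf{u}$ to turn the Lipschitz $F$ into a smooth $F_{\mu_1}$ with $O(L^*/\mu_1)$-Lipschitz gradient, Taylor-expand to get \eqref{lem5.1} via $\mathbb{E}[\mathbf{v}\mathbf{v}^\top]=I$, and for \eqref{lem5.2} split off the directional-derivative piece $\langle\nabla F_{\mu_1}(\mathbf{w}),\mathbf{v}\rangle\mathbf{v}$ and bound the residual---and you correctly flag the extraction of the $\sqrt{\mu_2/\mu_1}$ scaling in the residual as the nontrivial step that must be lifted from Duchi et~al. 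One small caution: the $\log d$ in \eqref{lem5.2} does not come from the moment bounds of Lemma~\ref{lem:4} (those are dimension-free constants $c_k$) but from controlling $\mathbb{E}[\langle a,\mathbf{v}\rangle^2\|\mathbf{v}\|_2^2]$ in the Gaussian/ball cases, where a $\chi^2$-type concentration argument produces the logarithmic factor; your sketch attributes it slightly imprecisely, but the overall plan is sound.
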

	\subsubsection{Complexity Analysis}
	We analyze iteration complexity of Algorithm \ref{alg:2} in this subsection. To this end, we first present the optimization error of $\bar{\mathbf{w}}_t^{(i)}$ in \eqref{weighted value for non-smooth min} for each risk function.
	Unlike the proof of Theorem \ref{thm:2}, we introduce an additional smoothing parameter in the formula of non-smooth directional gradient estimator. Thus, we present a different approach to analyze convergence of non-smooth ZO-SMD, which will utilize the properties of smoothed function mentioned in \eqref{smooth f}.
	\begin{theorem}\label{thm:6}
		If Assumptions \ref{assum:1}, \ref{assum:2}, \ref{assum:4} and \ref{assum:5} hold, by setting $\eta_t^{(i)}=\frac{\sqrt{2}}{\tau_2L^*d\sqrt{t+1}}$, $\mu_{1,t}=\frac{1}{t+1}$ and $\mu_{2,t}=\frac{1}{d(t+1)^2}$ in Algorithm \ref{alg:2}, we have
		\begin{align}\label{non-smooth min convergence}
			&\mathbb{E}[R_i(\bar{\mathbf{w}}_t^{(i)}) - R_i^*] \nonumber \\
			\leq& \frac{2\sqrt{2}L^*\left[\tau_2D^2d+c\tau_2\sqrt{d}+c\tau_2\log(2d)\ln 3+\sqrt{3}\tau_2^2D\sqrt{d}+\sqrt{6}\sqrt{d}\right]}{\sqrt{t+2}} \nonumber \\
			=& \mathcal{O}\left(\frac{1}{\sqrt{t}}\right).
		\end{align}
	\end{theorem}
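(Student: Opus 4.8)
Here is how I would attack Theorem~\ref{thm:6}. The plan is to reproduce the argument of Theorem~\ref{thm:2}, but with the true gradient $\nabla R_i$ replaced by the gradient of the smoothed surrogate ${R_i}_{\mu_1}$ of Lemma~\ref{lem:5}, since by \eqref{lem5.1} the non-smooth estimator $\hat{\nabla}_{\textnormal{ns}}\ell$ is an unbiased estimator of $\nabla{R_i}_{\mu_1}$ only up to the controllable bias $\frac{\mu_2}{\mu_1}L^*v$. First, fixing $\mathbf{w}_*^{(i)}\in\operatorname{argmin}_{\mathbf{w}\in\mathcal{W}}R_i(\mathbf{w})$ and using the convexity of $R_i$ together with the definition \eqref{weighted value for non-smooth min} of $\bar{\mathbf{w}}_t^{(i)}$, I would reduce $\mathbb{E}[R_i(\bar{\mathbf{w}}_t^{(i)})-R_i^*]$ to a weighted average of the per-iterate gaps $R_i(\mathbf{w}_j^{(i)})-R_i(\mathbf{w}_*^{(i)})$ over $j=\lceil t/2\rceil,\dots,t$, exactly as in \eqref{bias of smooth min}.

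Next, for each index $j$ I would pass to the smoothed function at level $\mu_{1,j}$: Jensen's inequality gives ${R_i}_{\mu_{1,j}}\ge R_i$, so the difference $R_i(\mathbf{w}_j^{(i)})-{R_i}_{\mu_{1,j}}(\mathbf{w}_j^{(i)})$ is nonpositive, while the $\ell_2$-Lipschitzness of Assumption~\ref{assum:5} and $\mathbb{E}[\|\mathbf{u}\|_2]\le\sqrt{d}$ give ${R_i}_{\mu_{1,j}}(\mathbf{w}_*^{(i)})-R_i(\mathbf{w}_*^{(i)})\le\mu_{1,j}L^*\sqrt{d}$. Hence $R_i(\mathbf{w}_j^{(i)})-R_i(\mathbf{w}_*^{(i)})$ is bounded by ${R_i}_{\mu_{1,j}}(\mathbf{w}_j^{(i)})-{R_i}_{\mu_{1,j}}(\mathbf{w}_*^{(i)})$ plus $\mu_{1,j}L^*\sqrt{d}$. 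Because smoothing preserves convexity and ${R_i}_{\mu_{1,j}}$ is differentiable, I would bound the first difference by $\langle\nabla{R_i}_{\mu_{1,j}}(\mathbf{w}_j^{(i)}),\mathbf{w}_j^{(i)}-\mathbf{w}_*^{(i)}\rangle$ and then split this inner product into an estimator term $\langle\hat{\nabla}_{\textnormal{ns}}\ell(\mathbf{w}_j^{(i)};\mathcal{P}_i),\cdot\rangle$ and a bias term $\langle\nabla{R_i}_{\mu_{1,j}}-\hat{\nabla}_{\textnormal{ns}}\ell,\cdot\rangle$, mirroring \eqref{smooth min split}.

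I would then estimate the three resulting contributions. For the estimator term, the mirror-descent inequality (Lemma 2.1 in \citep{nemirovski2009robust}), as in \eqref{smooth min split1}, telescopes the Bregman distances against \eqref{B_w bound}, and its quadratic remainder is controlled by \eqref{lem5.2} after converting $\|\cdot\|_{w,*}^2\le\tau_2^2\|\cdot\|_2^2$ and transferring the single-sample bound to the $r$-sample average as in \eqref{smooth min zo bound}. For the bias term, I would invoke \eqref{lem5.1}, where $\|v\|_2\le\tfrac12\mathbb{E}[\|\mathbf{v}\|_2^3]\le\tfrac{\sqrt3}{2}d^{3/2}$ by Lemma~\ref{lem:4}, and apply Cauchy--Schwarz together with the domain bound $\|\mathbf{w}_j^{(i)}-\mathbf{w}_*^{(i)}\|_w\le\sqrt{2}D$. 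The smoothing residuals $\mu_{1,j}L^*\sqrt{d}$ make up the third contribution.

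Finally, substituting $\eta_j^{(i)}=\tfrac{\sqrt2}{\tau_2L^*d\sqrt{j+1}}$, $\mu_{1,j}=\tfrac1{j+1}$ and $\mu_{2,j}=\tfrac1{d(j+1)^2}$ gives $\sqrt{\mu_{2,j}/\mu_{1,j}}\,d=\sqrt{d}/\sqrt{j+1}$ and $\mu_{2,j}/\mu_{1,j}=1/(d(j+1))$, so that the estimator, bias and smoothing contributions reduce to sums of $1/(j+1)$, $1/(j+1)^{3/2}$ and their multiples; bounding these by \eqref{inequality1}, \eqref{inequality2} and \eqref{inequality3} and dividing by $\sum_{j}\eta_j^{(i)}\ge\tfrac{\sqrt2}{4\tau_2L^*d}\sqrt{t+2}$ produces the stated $\mathcal{O}(1/\sqrt{t})$ bound. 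The main obstacle is the joint calibration of $\mu_1$ and $\mu_2$: the bias scales like $\mu_2/\mu_1$, the variance proxy like $\sqrt{\mu_2/\mu_1}\,d$ and the surrogate gap like $\mu_1$, so the two smoothing sequences must be tuned so that all three decay at the common rate $1/\sqrt{t}$ once weighted by $\eta_j^{(i)}$. This balancing, together with the repeated switching between the $\ell_2$-norm in which Lemmas~\ref{lem:4} and~\ref{lem:5} are stated and the norm $\|\cdot\|_w$ in which the mirror-descent steps live, is where the delicate bookkeeping lies.
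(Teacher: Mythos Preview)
Your proposal is correct and follows essentially the same route as the paper: pass from $R_i$ to the smoothed surrogate ${R_i}_{\mu_{1,j}}$ (incurring the $\mu_{1,j}L^*\sqrt{d}$ residual), linearize by convexity, split into the estimator term handled via the mirror-descent lemma and \eqref{lem5.2}, and the bias term handled via \eqref{lem5.1} together with Lemma~\ref{lem:4}, then substitute the step-size and smoothing schedules and invoke \eqref{inequality1}--\eqref{inequality3}. The only cosmetic difference is that the paper quotes the smoothing gap as $\mu L^*\sqrt{d+2}\le\mu L^*\sqrt{3d}$ from \citep{duchi2012randomized} rather than your Jensen-plus-Lipschitz argument, which affects only constants.
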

	\begin{proof}
		By \eqref{smooth f} and Lemma 9 in \cite{duchi2012randomized} and the definition of ${R_i}_{\mu}(\mathbf{w})$, i.e.,  ${R_i}_{\mu}(\mathbf{w})=\mathbb{E}_{(\mathbf{u},\mathbf{z})} \ell\left(\mathbf{w} + \mu \mathbf{u}; \mathbf{z}\right)$, it can be easily derived that $R_i(\mathbf{w}) \leq {R_i}_{\mu}(\mathbf{w}) \leq R_i(\mathbf{w}) + \mu L^* \sqrt{d + 2}$ for any $\mathbf{w} \in \mathcal{W}$. Noting that $\sqrt{d+2}<\sqrt{3d}$, and by the convexity of ${R_i}_{\mu}(\mathbf{w})$, we have
		\begin{align}\label{bias1*}
			&R_i(\bar{\mathbf{w}}_t^{(i)}) - R_i(\mathbf{w}_*^{(i)}) = R_i\left(\sum_{j=\lceil\frac{t}{2}\rceil}^{t}\frac{\eta_j^{(i)} \mathbf{w}^{(i)}_j}{\sum_{k=\lceil\frac{t}{2}\rceil}^{t} \eta_k^{(i)}}\right) - R_i(\mathbf{w}_*^{(i)}) \nonumber \\
			\leq& \sum_{j=\lceil\frac{t}{2}\rceil}^{t} \frac{1}{\sum_{k=\lceil\frac{t}{2}\rceil}^{t} \eta_k^{(i)}}\eta_j^{(i)} \left(R_i(\mathbf{w}_j^{(i)}) - R_i(\mathbf{w}_*^{(i)})\right) \nonumber \\
			\leq& \sum_{j=\lceil\frac{t}{2}\rceil}^{t} \frac{1}{\sum_{k=\lceil\frac{t}{2}\rceil}^{t} \eta_k^{(i)}}\eta_j^{(i)} \left({R_i}_{\mu_{1,j}}(\mathbf{w}_j^{(i)}) - {R_i}_{\mu_{1,j}}(\mathbf{w}_*^{(i)})\right)+\sqrt{3d}L^*\sum_{j=\lceil\frac{t}{2}\rceil}^{t} \frac{1}{\sum_{k=\lceil\frac{t}{2}\rceil}^{t} \eta_k^{(i)}}\eta_j^{(i)}\mu_{1,j} \nonumber \\
			\leq& \sum_{j=\lceil\frac{t}{2}\rceil}^{t} \frac{1}{\sum_{k=\lceil\frac{t}{2}\rceil}^{t} \eta_k^{(i)}}\eta_j^{(i)} \left\langle \nabla {R_i}_{\mu_{1,j}}(\mathbf{w}_j^{(i)}), \mathbf{w}_j^{(i)} - \mathbf{w}_*^{(i)}\right\rangle+\sqrt{3d}L^*\sum_{j=\lceil\frac{t}{2}\rceil}^{t} \frac{1}{\sum_{k=\lceil\frac{t}{2}\rceil}^{t} \eta_k^{(i)}}\eta_j^{(i)}\mu_{1,j}.
		\end{align}
		Similarly, we can split the first term in the r.h.s of \eqref{bias1*} into:
		\begin{align}\label{non-smooth min split}
			\eta_j^{(i)} \left\langle \nabla {R_i}_{\mu_{1,t}}(\mathbf{w}_j^{(i)}), \mathbf{w}_j^{(i)} - \mathbf{w}_*^{(i)} \right\rangle &= \eta_j^{(i)} \left\langle \hat{\nabla}_{\text{ns}} \ell(\mathbf{w}_j^{(i)}; \mathcal{P}_{i}), \mathbf{w}_j^{(i)} - \mathbf{w}_*^{(i)}\right\rangle  \nonumber \\
			&\quad + \eta_j^{(i)} \left\langle \nabla {R_i}_{\mu_{1,j}}(\mathbf{w}_j^{(i)}) - \hat{\nabla}_{\text{ns}} \ell(\mathbf{w}_j^{(i)}; \mathcal{P}_{i}), \mathbf{w}_j^{(i)} - \mathbf{w}_*^{(i)} \right\rangle.
		\end{align}
		Firstly, we estimate the first term of \eqref{non-smooth min split}. From the property of mirror descent, we have
		\begin{align}\label{non-smooth min split1}
			& \eta_j^{(i)} \left\langle\hat{\nabla}_{\text{ns}} \ell(\mathbf{w}_j^{(i)}; \mathcal{P}_{i}), \mathbf{w}_j^{(i)} - \mathbf{w}_*^{(i)}\right\rangle \nonumber \\
			\leq& B_w\left(\mathbf{w}_*^{(i)}, \mathbf{w}_j^{(i)}\right) - B_w\left(\mathbf{w}_*^{(i)}, \mathbf{w}_{j+1}^{(i)}\right) + \frac{(\eta_j^{(i)})^2}{2} \left\|\hat{\nabla}_{\text{ns}} \ell(\mathbf{w}_j^{(i)}; \mathcal{P}_{i})\right\|_{w,*}^2.
		\end{align}
		Summing \eqref{non-smooth min split1} over $j = \lceil\frac{t}{2}\rceil,\cdots,t$ and taking expectation over both sides, we get
		\begin{align}\label{non-smooth min split1*}
			&\mathbb{E}\left[\sum_{j=\lceil\frac{t}{2}\rceil}^{t} \eta_j^{(i)} \left\langle\hat{\nabla}_{\text{ns}} \ell(\mathbf{w}_j^{(i)}; \mathcal{P}_{i}), \mathbf{w}_j^{(i)} - \mathbf{w}_*^{(i)}\right\rangle \right] \nonumber\\
			\leq& B_w\left(\mathbf{w}_*^{(i)}, \mathbf{w}_1^{(i)}\right) +  \sum_{j=\lceil\frac{t}{2}\rceil}^{t}\frac{\left(\eta_j^{(i)}\right)^2}{2}  \mathbb{E}\left[\left\| \hat{\nabla}_{\text{ns}} \ell(\mathbf{w}_j^{(i)}; \mathcal{P}_{i}) \right\|^2_{w,*}\right].
		\end{align}
		Similar to the proof of Theorem \ref{lem:1}, by \eqref{lem5.2} in Lemma \ref{lem:5}, we have
		\begin{align}\label{non-smooth min zo bound}
			&\mathbb{E} \left[\left\|\hat{\nabla}_{\text{ns}}\ell(\mathbf{w}_j^{(i)}; \mathcal{P}_i)\right\|_{w,*}^2 \right] 
			\leq \frac{1}{r} \mathbb{E} \left[\sum_{k=1}^{r} \left\|\hat{\nabla}_{\text{ns}}\ell(\mathbf{w}_j^{(i)}; \mathbf{z}_{j,i}^{(k)})\right\|_{w,*}^2 \right] \nonumber \\
			\leq& c \tau_2^2{L^*}^2 d \left( \sqrt{\frac{\mu_{2,j}}{\mu_{1,j}}} d + 1 + \log d \right) \leq c \tau_2^2{L^*}^2 d\left( \sqrt{\frac{\mu_{2,j}}{\mu_{1,j}}} d + \log 2d \right). 
		\end{align}
		Plugging \eqref{B_w bound} and \eqref{non-smooth min zo bound} into \eqref{non-smooth min split1*}, we then have 
		\begin{align}\label{non-smooth min split1**}
			&\mathbb{E}\left[\sum_{j=\lceil\frac{t}{2}\rceil}^{t} \eta_j^{(i)} \left\langle\hat{\nabla}_{\text{ns}} \ell(\mathbf{w}_j^{(i)}; \mathcal{P}_{i}), \mathbf{w}_j^{(i)} - \mathbf{w}_*^{(i)}\right\rangle \right] \nonumber\\
			\leq& D^2 + \frac{c\tau_2^2 {L^*}^2 d}{2} \sum_{j=\lceil\frac{t}{2}\rceil}^{t}\left( \sqrt{\frac{\mu_{2,j}}{\mu_{1,j}}} d + \log 2d \right) \left(\eta_j^{(i)}\right)^2 \nonumber\\
			\leq& D^2 + \frac{c\tau_2^2 {L^*}^2 d^2}{2} \sum_{j=\lceil\frac{t}{2}\rceil}^{t} \sqrt{\frac{\mu_{2,j}}{\mu_{1,j}}}\left(\eta_j^{(i)}\right)^2+\frac{c \tau_2^2{L^*}^2 d\log(2d)}{2} \sum_{j=\lceil\frac{t}{2}\rceil}^{t} \left(\eta_j^{(i)}\right)^2.
		\end{align}
		Next, we estimate the second term of the r.h.s of \eqref{non-smooth min split}. By \eqref{lem5.1} in Lemma \ref{lem:5}, we obtain
		\begin{align}\label{non-smooth min split2}
			&\mathbb{E}\left[\eta_j^{(i)}\left\langle \nabla {R_i}_{\mu_{1,j}}(\mathbf{w}_j^{(i)}) - \hat{\nabla}_{\text{ns}} \ell(\mathbf{w}_j^{(i)}; \mathcal{P}_{i}), \mathbf{w}_j^{(i)} - \mathbf{w}_*^{(i)} \right\rangle\right]  \nonumber\\
			\leq& \frac{\mu_{2,j}}{\mu_{1,j}} L^*\left\|v(\mu_{1,j}, \mu_{2,j},\mathbf{w}_j^{(i)})\right\|_{w,*}\left\|\mathbf{w}_j^{(i)} - \mathbf{w}_*^{(i)}\right\|_w \nonumber \\
			\leq&\frac{\mu_{2,j}}{\mu_{1,j}} L^*\tau_2^2\left\|v(\mu_{1,j}, \mu_{2,j},\mathbf{w}_j^{(i)})\right\|_2\sqrt{2B_w\left(\mathbf{w}_j^{(i)},\mathbf{w}_*^{(i)}\right)} \nonumber \\
			\leq&\frac{\sqrt{2}}{2}\tau_2^2DL^* \mathbb{E}\left[\left\|\mathbf{v}_i\right\|_2^3\right]\frac{\mu_{2,j}}{\mu_{1,j}} .
		\end{align}
		Since Lemma \ref{lem:4} implies that $\mathbb{E}\left[\left\|\mathbf{v}_i\right\|_2^3\right] \leq \sqrt{3}d^{3/2}$, summing \eqref{non-smooth min split2} over $j = \lceil\frac{t}{2}\rceil,\cdots,t$, we have
		\begin{align}\label{non-smooth min split2*}
			&\mathbb{E}\left[\sum_{j=\lceil\frac{t}{2}\rceil}^{t} \eta_j^{(i)}\left\langle \nabla {R_i}_{\mu_{1,j}}(\mathbf{w}_j^{(i)}) - \hat{\nabla}_{\text{ns}} \ell(\mathbf{w}_j^{(i)}; \mathcal{P}_{i}), \mathbf{w}_j^{(i)} - \mathbf{w}_*^{(i)} \right\rangle\right]  \nonumber\\
			\leq& \frac{\sqrt{6}}{2}L^*Dd^{3/2}\tau_2^2 \sum_{j=\lceil\frac{t}{2}\rceil}^{t} \frac{\mu_{2,j}}{\mu_{1,j}}\eta_j^{(i)}.
		\end{align}
	Taking expection over \eqref{non-smooth min split}, and plugging \eqref{non-smooth min split1**} and \eqref{non-smooth min split2*} into \eqref{non-smooth min split}, we then obtain
		\begin{align}\label{non-smooth min simplify}
			&\mathbb{E}\left[R_i(\bar{\mathbf{w}}_t^{(i)}) - R_i(\mathbf{w}_*^{(i)})\right]  \nonumber\\
			\leq& \frac{D^2 + \frac{c {L^*}^2 d^2\tau_2^2}{2} \sum\limits_{j=\lceil\frac{t}{2}\rceil}^{t} \sqrt{\frac{\mu_{2,j}}{\mu_{1,j}}}\left(\eta_j^{(i)}\right)^2+\frac{c {L^*}^2 d\log(2d)\tau_2^2}{2} \sum\limits_{j=\lceil\frac{t}{2}\rceil}^{t} \left(\eta_j^{(i)}\right)^2+\frac{\sqrt{6}}{2}L^*Dd^{3/2}\tau_2^2 \sum\limits_{j=\lceil\frac{t}{2}\rceil}^{t} \frac{\mu_{2,j}}{\mu_{1,j}}\eta_j^{(i)}}{\sum\limits_{j=\lceil\frac{t}{2}\rceil}^{t} \eta_j^{(i)}} \nonumber\\
			&+\frac{\sqrt{3d}L^*\sum\limits_{j=\lceil\frac{t}{2}\rceil}^{t} \eta_j^{(i)}\mu_{1,j}}{\sum\limits_{j=\lceil\frac{t}{2}\rceil}^{t} \eta_j^{(i)}}.
		\end{align}
		By setting $\eta_j^{(i)}=\frac{\sqrt{2}}{\tau_2L^*d\sqrt{j+1}}$, $\mu_{1,j}=\frac{1}{j+1}$ and $\mu_{2,j}=\frac{1}{d(j+1)^2}$, plugging \eqref{inequality1}, \eqref{inequality2} and \eqref{inequality3} into \eqref{non-smooth min simplify}, and by the inequality $\sum_{j=\lceil\frac{t}{2}\rceil}^{t} \frac{1}{(j+1)^{3/2}} \leq 1$, we have
		\begin{align}
			&\mathbb{E}\left[R_i(\bar{\mathbf{w}}_t^{(i)}) - R_i(\mathbf{w}_*^{(i)})\right]  \nonumber\\
			\leq& \frac{D^2 +\frac{c}{\sqrt{d}} \sum\limits_{j=\lceil\frac{t}{2}\rceil}^{t} \frac{1}{(j+1)^{3/2}}+c\frac{\log (2d)}{d} \sum\limits_{j=\lceil\frac{t}{2}\rceil}^{t} \frac{1}{j+1}+\frac{\sqrt{3}\tau_2D}{\sqrt{d}}\sum\limits_{j=\lceil\frac{t}{2}\rceil}^{t} \frac{1}{(j+1)^{3/2}}+\frac{\sqrt{6}}{\tau_2\sqrt{d}}\sum\limits_{j=\lceil\frac{t}{2}\rceil}^{t} \frac{1}{(j+1)^{3/2}}}{\frac{\sqrt{2}}{\tau_2L^*d}\sum\limits_{j=\lceil\frac{t}{2}\rceil}^{t}\frac{1}{\sqrt{j+1}}} \nonumber \\
			\leq& \frac{2\sqrt{2}L^*\left[\tau_2D^2d+c\tau_2\sqrt{d}+c\tau_2\log(2d)\ln 3+\sqrt{3}\tau_2^2D\sqrt{d}+\sqrt{6}\sqrt{d}\right]}{\sqrt{t+2}} \nonumber \\
			=& \mathcal{O}\left(\frac{1}{\sqrt{t}}\right),
		\end{align}
		which completes the proof.
	\end{proof}
	With a varying step size, the expected convergence rate of ZO-SMD to solve non-smooth MERO is in the order of $\mathcal{O}\left(1/\sqrt{t}\right)$, which is faster than the SMD algorithm proposed in \citep{zhang2023efficient} and the two-point gradient estimates algorithm proposed in \citep{duchi2015optimal}, and reaches the $\mathcal{O}(1/\sqrt{t})$ bound \citep{nemirovski2009robust}. The  two-point gradient estimates algorithm (\citet{zhang2023efficient}) can also reach $\mathcal{O}\left(1/\sqrt{t}\right)$ under a slightly stronger condition that $\max_{\mathbf{w},\mathbf{w}^{'} \in \mathcal{W}}B_{w}(\mathbf{w},\mathbf{w}^{'})\leq D^{2}$.
	
	Next, we examine the optimization error of $\bar{\mathbf{w}}_t$ and $\bar{\mathbf{q}}_t$ in Algorithm \ref{alg:2}.
	Unlike the proof of Theorem \ref{thm:3}, we still need to utilize the properties of smoothed function in \eqref{smooth f} to address the convergence rate of the non-smooth MERO problem. However, this involves the difficulty of decomposing $\phi(\mathbf{w},\mathbf{q})$ into a formula containing ${R_i}_{\mu}(\mathbf{w})$, otherwise we will encounter dilemma in the estimate of item similarly to \eqref{smooth minimax split3.1}, which we elaborate on in detail below.
	\begin{theorem}\label{thm:7}
		Suppose Assumptions \ref{assum:1},\ref{assum:2}, \ref{assum:4} and \ref{assum:5} hold. Setting
		\begin{equation}
			\eta_t^{(i)} = \frac{\sqrt{2}}{\tau_2L^*d\sqrt{t+1}}, \quad \eta_t^{w} =\frac{2D^2}{\sqrt{2}\tau_1\tau_2L^*d\sqrt{t+1}}, \quad \eta_t^{q}=\frac{2\ln m}{\sqrt{2}\tau_1\tau_2L^*d\sqrt{t+1}} \nonumber
		\end{equation}
		in Algorithm \ref{alg:2}, and
		\begin{equation}
			\mu_{1,t}=\frac{1}{t+1},\quad \mu_{2,t}=\frac{1}{d(t+1)^2}, \nonumber
		\end{equation}
		we have
		\begin{align}\label{non-smooth minimax convergence}
			& \mathbb{E}\left[\max_{\mathbf{q} \in \Delta_m} \phi(\bar{\mathbf{w}}_t,\mathbf{q}) - \min_{\mathbf{w} \in \mathcal{W}} \phi(\mathbf{w}, \bar{\mathbf{q}}_t)\right] \nonumber\\
			\leq& \frac{2\sqrt{2}L^*}{\sqrt{t + 2}}\Bigg\{4\tau_1\tau_2d+\frac{5c\tau_2D^2\sqrt{d}}{\tau_1}+\sqrt{3}\tau_2^2D\sqrt{d}+\frac{5c\tau_2^2D^2 {L^*}^2 d \log (2d)+5C^2 \ln m}{\tau_1\tau_2{L^*}^2d}\ln3  \nonumber\\
			&+8\ln3\left[\tau_2D^2d+c\tau_2\sqrt{d}+c\tau_2\log(2d)\ln 3+\sqrt{3}\tau_2^2D\sqrt{d}+\sqrt{6}\sqrt{d}\right] \Bigg\} \nonumber \\
			=& \mathcal{O}\left( \frac{1}{\sqrt{t}} \right).
		\end{align}
	\end{theorem}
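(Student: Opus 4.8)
The plan is to follow the architecture of the proof of Theorem~\ref{thm:3} wherever possible (the merging of the two updates, the three-term split \eqref{smooth minimax split}, and the ghost-iterate device), and to insert the randomized-smoothing surrogate ${R_i}_{\mu_{1,j}}$ only where the earlier argument exploited smoothness of $R_i$. First I would equip $\mathcal{E}\times\mathbb{R}^m$ with the same norm, dual norm, distance-generating function $\nu$ and combined Bregman distance $B(\cdot,\cdot)$ as in part~(1) of the proof of Theorem~\ref{thm:3}, so that \eqref{ZO-SMD w for non-smooth minimax} and \eqref{ZO-SMD q for non-smooth minimax} collapse into a single mirror step driven by $\hat{\mathbf{g}}(\mathbf{w}_t,\mathbf{q}_t)=[\hat{\mathbf{g}}_w^*(\mathbf{w}_t,\mathbf{q}_t),-\mathbf{g}_q(\mathbf{w}_t,\mathbf{q}_t)]$, and the domain bound \eqref{B bound} carries over unchanged.

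The essential new device enters in the analogue of \eqref{smooth minimax optimization error}. Since $\mathbb{E}[\hat{\nabla}_{\textnormal{ns}}\ell(\mathbf{w}_j;\mathcal{P}_i)]$ approximates $\nabla{R_i}_{\mu_{1,j}}(\mathbf{w}_j)$ rather than any subgradient of the possibly non-smooth $R_i$, I would not linearize $\phi$ directly. Splitting $\phi(\mathbf{w}_j,\mathbf{q})-\phi(\mathbf{w},\mathbf{q}_j)$ through the intermediate $\phi(\mathbf{w}_j,\mathbf{q}_j)$, the $\mathbf{q}$-difference is exactly linear and needs no smoothing, whereas for the $\mathbf{w}$-difference I would apply the sandwich inequality $R_i(\mathbf{w})\le{R_i}_{\mu_{1,j}}(\mathbf{w})\le R_i(\mathbf{w})+\mu_{1,j}L^*\sqrt{d+2}$ from the proof of Theorem~\ref{thm:6} (upper bound on $R_i(\mathbf{w}_j)$, lower bound on $R_i(\mathbf{w})$) followed by convexity of ${R_i}_{\mu_{1,j}}$. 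This yields, for every $\mathbf{x}=(\mathbf{w},\mathbf{q})$, the inequality $\phi(\mathbf{w}_j,\mathbf{q})-\phi(\mathbf{w},\mathbf{q}_j)\le\langle\tilde{F}_j,\mathbf{x}_j-\mathbf{x}\rangle+\mu_{1,j}L^*\sqrt{d+2}$, where the target field is the \emph{partially smoothed} $\tilde{F}_j=[\,\sum_{i=1}^m q_{j,i}\nabla{R_i}_{\mu_{1,j}}(\mathbf{w}_j),\,-(R_1(\mathbf{w}_j)-R_1^*,\ldots,R_m(\mathbf{w}_j)-R_m^*)^\top\,]$: smoothing the $\mathbf{w}$-block but keeping the $\mathbf{q}$-block intact. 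This is the ``formula containing ${R_i}_{\mu}$'' flagged before the statement, and it is exactly what makes both bias blocks tractable: the $\mathbf{w}$-bias $\tilde{F}_j^{(w)}-\mathbb{E}_{j-1}[\hat{\mathbf{g}}_w^*]$ collapses to the offset $-\sum_i q_{j,i}\frac{\mu_{2,j}}{\mu_{1,j}}L^*v$ of Lemma~\ref{lem:5} instead of the intractable difference $\nabla R_i-\nabla{R_i}_{\mu_{1,j}}$, while the $\mathbf{q}$-bias stays $-(R_i(\bar{\mathbf{w}}_j^{(i)})-R_i^*)_{i=1}^m$, controllable by Theorem~\ref{thm:6}. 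The extra additive term $\sum_j\eta_j\mu_{1,j}L^*\sqrt{d+2}$, with $\mu_{1,j}=1/(j+1)$ and the merged $\eta_j\propto(j+1)^{-1/2}$, is $\mathcal{O}(\sum_j(j+1)^{-3/2})=\mathcal{O}(1)$ and hence $\mathcal{O}(1/\sqrt t)$ after dividing by $\sum_j\eta_j=\Theta(\sqrt t)$.

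With $\tilde{F}_j$ installed I would replicate the three-term split \eqref{smooth minimax split}. The mirror-descent regret term uses \eqref{smooth minimax split1} together with the combined second-moment bound $\mathbb{E}[\|\hat{\mathbf{g}}(\mathbf{w}_j,\mathbf{q}_j)\|_*^2]\le 2D^2 c\tau_2^2{L^*}^2 d(\sqrt{\mu_{2,j}/\mu_{1,j}}\,d+\log 2d)+2C^2\ln m$, assembled from the per-distribution estimate \eqref{non-smooth min zo bound} via $(\mathbb{E}X)^2\le\mathbb{E}X^2$ exactly as in \eqref{smooth minimax w zo bound}, and from $\|\mathbf{g}_q\|_\infty\le C$ (Assumption~\ref{assum:2}). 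The variance term is handled by the identical ghost-iterate sequence $\{\mathbf{y}_t\}$, reusing \eqref{smooth minimax split2.1} and \eqref{smooth minimax split2.2} with the new second moment. For the bias term I would split as in \eqref{smooth minimax split3}: its $\mathbf{w}$-part is bounded through \eqref{non-smooth min split2} and Lemma~\ref{lem:4} (so $\mathbb{E}\|\mathbf{v}_i\|_2^3\le\sqrt3 d^{3/2}$) by $\frac{\sqrt6}{2}L^*Dd^{3/2}\tau_2^2\sum_j\frac{\mu_{2,j}}{\mu_{1,j}}\eta_j$, and its $\mathbf{q}$-part exactly as in \eqref{smooth minimax split3.2}, using $\|\mathbf{q}_j-\mathbf{q}\|_1\le2$ and the excess-risk bound of Theorem~\ref{thm:6} in place of Theorem~\ref{thm:2}.

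Finally I would substitute the prescribed $\eta_t^w,\eta_t^q,\mu_{1,t},\mu_{2,t}$ and collect terms using \eqref{inequality1}, \eqref{inequality2}, \eqref{inequality3}, the bound $\sqrt{(j+2)(j+1)}\ge j+1$ from \eqref{inequality4}, and $\sum_{j=\lceil t/2\rceil}^t(j+1)^{-3/2}\le1$; here $\mu_{2,j}/\mu_{1,j}=1/(d(j+1))$ and $\sqrt{\mu_{2,j}/\mu_{1,j}}=1/\sqrt{d(j+1)}$ are the scalings that convert the $d^2\sqrt{\mu_{2,j}/\mu_{1,j}}$ and $d^{3/2}\mu_{2,j}/\mu_{1,j}$ factors into the $\sqrt d$ contributions seen in \eqref{non-smooth minimax convergence}. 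The step I expect to be the main obstacle is making the partial-smoothing decomposition of the second paragraph airtight: one must apply the sandwich inequality with the \emph{per-iterate} parameter $\mu_{1,j}$ and verify that smoothing only the $\mathbf{w}$-block of $\tilde{F}_j$ is simultaneously consistent with the expectation offset of $\hat{\mathbf{g}}_w^*$ (Lemma~\ref{lem:5}) in the $\mathbf{w}$-direction and with the Theorem~\ref{thm:6} bias of $\mathbf{g}_q$ in the $\mathbf{q}$-direction, so that neither block of $\tilde{F}_j-\mathbb{E}_{j-1}[\hat{\mathbf{g}}(\mathbf{w}_j,\mathbf{q}_j)]$ destroys the $\mathcal{O}(1/\sqrt t)$ rate.
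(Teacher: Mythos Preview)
Your proposal is correct and follows essentially the same route as the paper: the paper also introduces the partially smoothed field $F^*(\mathbf{w}_j,\mathbf{q}_j)=[\sum_i q_{j,i}\nabla{R_i}_{\mu_{1,j}}(\mathbf{w}_j),\,-\nabla_q\phi(\mathbf{w}_j,\mathbf{q}_j)]$ (your $\tilde F_j$), derives the extra additive $\sqrt{3d}L^*\sum_j\mu_{1,j}\eta_j$ from the sandwich inequality and convexity of ${R_i}_{\mu_{1,j}}$, and then runs the same three-term split with the ghost-iterate device, the second-moment bound \eqref{non-smooth min zo bound}, Lemma~\ref{lem:5} for the $\mathbf{w}$-bias, and Theorem~\ref{thm:6} for the $\mathbf{q}$-bias. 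The ``main obstacle'' you flag is exactly the step the paper handles in \eqref{pre error}--\eqref{non-smooth minimax optimization error}, and your treatment of it is the same as theirs.
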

	\begin{proof}		
		Similar to the step (1) in the proof of Theorem \ref{thm:3}, we first combine the two update rules in \eqref{ZO-SMD w for non-smooth minimax} and \eqref{ZO-SMD q for non-smooth minimax} into a single one, i.e., \eqref{ZO-SMD w for non-smooth minimax} and \eqref{ZO-SMD q for non-smooth minimax} are equivalent to
		\begin{equation}
			\mathbf{x}_{t+1} = \underset{\mathbf{x} \in \mathcal{W} \times \Delta_m}{\operatorname{argmin}}\left\{ \eta_t \left\langle \left[\hat{\mathbf{g}}_w^*(\mathbf{w}_t, \mathbf{q}_t), - \mathbf{g}_q(\mathbf{w}_t, \mathbf{q}_t)\right], \mathbf{x} - \mathbf{x}_t \right\rangle + B(\mathbf{x}, \mathbf{x}_t) \right\}. \nonumber
		\end{equation}
		We skip some of repeated segments similar to the step (2) in the proof of Theorem \ref{thm:3} except the merged gradient and its norm. The non-smooth zeroth-order stochastic gradient $\hat{\mathbf{g}}^*(\mathbf{w}_t, \mathbf{q}_t)$ is defined as:
		\begin{align}
			& \hat{\mathbf{g}}^*(\mathbf{w}_t, \mathbf{q}_t) = \left[ \hat{\mathbf{g}}_w^*(\mathbf{w}_t, \mathbf{q}_t), - \mathbf{g}_q(\mathbf{w}_t, \mathbf{q}_t) \right] \nonumber \\
			=& \left[ \sum_{i=1}^{m} q_{t,i} \hat{\nabla}_{\text{ns}} \ell(\mathbf{w}_t; \mathcal{P}_i),-\left[\frac{1}{r} \sum_{j=1}^{r}\left[\ell(\mathbf{w}_t; \mathbf{z}_{t,1}^{(j)}) - \ell(\bar{\mathbf{w}}_t^{(1)},\mathbf{z}_{t,1}^{(j)})\right], \cdots, \frac{1}{r} \sum_{j=1}^{r}\left[\ell(\mathbf{w}_t; \mathbf{z}_{t,m}^{(j)}) - \ell(\bar{\mathbf{w}}_t^{(m)},\mathbf{z}_{t,m}^{(j)})\right] \right]^\top \right].\nonumber
		\end{align}
		By the proof of Theorem \ref{thm:6}, we have:
		\begin{align}\label{pre error}
			\phi(\mathbf{w}, \mathbf{q}_j)-\phi(\mathbf{w}_j, \mathbf{q}_j) &=\sum_{i=1}^{m}q_{j,i}\left[R_i(\mathbf{w})-R_i(\mathbf{w}_j)\right] \nonumber \\
			&\geq \sum_{i=1}^{m}q_{j,i}\left[{R_i}_{\mu_{1,j}}(\mathbf{w})-{R_i}_{\mu_{1,j}}(\mathbf{w}_j)\right]-\sqrt{3d}L^*\sum_{i=1}^{m}q_{j,i}\mu_{1,j} \nonumber \\
			&\geq \left\langle\sum_{i=1}^{m}q_{j,i}\nabla{R_i}_{\mu_{1,j}}(\mathbf{w}_j),\mathbf{w}-\mathbf{w}_j\right\rangle-\sqrt{3d}L^*\mu_{1,j}.
		\end{align}
		For simplicity, we denote $\nabla_w^* \phi(\mathbf{w}_j, \mathbf{q}_j)=\sum_{i=1}^{m}q_{j,i}\nabla{R_i}_{\mu_{1,j}}(\mathbf{w}_j)$ and define the nearly true gradient of $\phi(\mathbf{w},\mathbf{q})$ at $(\mathbf{w}_t,\mathbf{q}_t)$ as $F^*(\mathbf{w}_j, \mathbf{q}_j)=\left[\nabla_w^* \phi(\mathbf{w}_j, \mathbf{q}_j),-\nabla_q \phi(\mathbf{w}_j, \mathbf{q}_j)\right]$.
		Thus, from the convexity-concavity of $\phi(\mathbf{w},\mathbf{q})$ and \eqref{pre error}, we define the optimization error as
		\begin{align}\label{non-smooth minimax optimization error}
			&\max_{\mathbf{q} \in \Delta_m} \phi(\bar{\mathbf{w}}_t, \mathbf{q}) - \min_{\mathbf{w} \in \mathcal{W}} \phi(\mathbf{w}, \bar{\mathbf{q}}_t)\nonumber \\
			\leq& \left( \sum_{j=\lceil\frac{t}{2}\rceil}^{t} \eta_j \right)^{-1} \left[ \max_{\mathbf{q} \in \Delta_m} \sum_{j=\lceil\frac{t}{2}\rceil}^{t} \eta_j \phi(\mathbf{w}_j, \mathbf{q}) - \min_{\mathbf{w} \in \mathcal{W}} \sum_{j=\lceil\frac{t}{2}\rceil}^{t} \eta_j \phi(\mathbf{w}, \mathbf{q}_j) \right]  \nonumber \\
			\leq& \left( \sum_{j=\lceil\frac{t}{2}\rceil}^{t} \eta_j \right)^{-1} \max_{\mathbf{x} \in \mathcal{W} \times \Delta_m} \sum_{j=\lceil\frac{t}{2}\rceil}^{t} \eta_j \langle F^*(\mathbf{w}_j, \mathbf{q}_j), \mathbf{x}_j - \mathbf{x} \rangle +\left( \sum_{j=\lceil\frac{t}{2}\rceil}^{t} \eta_j \right)^{-1}\sqrt{3d}L^*\sum_{j=\lceil\frac{t}{2}\rceil}^{t} \mu_{1,j}\eta_j.
		\end{align}
		
		Similarly, \eqref{non-smooth minimax optimization error} can be decomposed as follows:
		\begin{align}\label{non-smooth minimax split}
			&\max_{\mathbf{q} \in \Delta_m} \phi(\bar{\mathbf{w}}_t, \mathbf{q}) - \min_{\mathbf{w} \in \mathcal{W}} \phi(\mathbf{w}, \bar{\mathbf{q}}_t) \nonumber \\
			\leq& \left( \sum_{j=\lceil\frac{t}{2}\rceil}^{t} \eta_j \right)^{-1} \max_{\mathbf{x} \in \mathcal{W} \times \Delta_m} \sum_{j=\lceil\frac{t}{2}\rceil}^{t} \eta_j \left\langle\hat{\mathbf{g}}^* (\mathbf{w}_j, \mathbf{q}_j), \mathbf{x}_j - \mathbf{x} \right\rangle \nonumber \\
			&+ \left( \sum_{j=\lceil\frac{t}{2}\rceil}^{t} \eta_j \right)^{-1} \max_{\mathbf{x} \in \mathcal{W} \times \Delta_m} \sum_{j=\lceil\frac{t}{2}\rceil}^{t} \eta_j \left\langle \mathbb{E}_{j-1}\left[\hat{\mathbf{g}}^*(\mathbf{w}_j, \mathbf{q}_j)\right] - \hat{\mathbf{g}}^*(\mathbf{w}_j, \mathbf{q}_j), \mathbf{x}_j - \mathbf{x} \right\rangle \nonumber \\
			&+ \left( \sum_{j=\lceil\frac{t}{2}\rceil}^{t} \eta_j \right)^{-1} \max_{\mathbf{x} \in \mathcal{W} \times \Delta_m} \sum_{j=\lceil\frac{t}{2}\rceil}^{t} \eta_j \left\langle F^*(\mathbf{w}_j, \mathbf{q}_j) - \mathbb{E}_{j-1}\left[\hat{\mathbf{g}}^*(\mathbf{w}_j, \mathbf{q}_j)\right], \mathbf{x}_j - \mathbf{x} \right\rangle \nonumber \\
			&+\left( \sum_{j=\lceil\frac{t}{2}\rceil}^{t} \eta_j \right)^{-1}\sqrt{3d}L^*\sum_{j=\lceil\frac{t}{2}\rceil}^{t} \mu_{1,j}\eta_j.
		\end{align}
		Firstly, we estimate the first term in the r.h.s of \eqref{non-smooth minimax split}. By the property of mirror descent, we have
		\begin{equation}\label{non-smooth minimax split1}
			\eta_j \left\langle \hat{\mathbf{g}}^*(\mathbf{w}_j, \mathbf{q}_j), \mathbf{x}_j - \mathbf{x}\right\rangle \leq B\left(\mathbf{x}, \mathbf{x}_j\right) - B\left(\mathbf{x}, \mathbf{x}_{j+1}\right) + \frac{\eta_j^2}{2} \left\|\hat{\mathbf{g}}^*(\mathbf{w}_j, \mathbf{q}_j)\right\|_{*}^2.
		\end{equation}
		The norm of zeroth-order stochastic gradient $\hat{\mathbf{g}}^*(\mathbf{w}_j, \mathbf{q}_j)$ is well-bounded by \eqref{non-smooth min zo bound}:
		\begin{align}\label{non-smooth minimax w zo bound}
			\mathbb{E}\left[\left\| \hat{\mathbf{g}}_w^*(\mathbf{w}_j, \mathbf{q}_j) \right\|_{w,*}^2\right] &= \mathbb{E} \left[\left\| \sum_{i=1}^{m} q_{j,i} \hat{\nabla}_{\text{ns}} \ell(\mathbf{w}_j; \mathcal{P}_i) \right\|_{w,*}^2 \right]\nonumber \\
			&\leq c {L^*}^2 d \tau_2^2\left( \sqrt{\frac{\mu_{2,j}}{\mu_{1,j}}} d + \log (2d) \right),
		\end{align}
		Then, we have
		\begin{align}\label{non-smooth minimax zo bound}
			\mathbb{E}\left[\left\| \hat{\mathbf{g}}^*(\mathbf{w}_j, \mathbf{q}_j) \right\|_{*}^2 \right]&= \mathbb{E}\left[ 2D^2 \left\| \hat{\mathbf{g}}_w^*(\mathbf{w}_j, \mathbf{q}_j)\right\|_{w,*}^2 + 2 \ln m \left\| \mathbf{g}_q(\mathbf{w}_j, \mathbf{q}_j) \right\|_{\infty}^2 \right]\nonumber \\
			&\leq 2c\tau_2^2D^2 {L^*}^2 d \log (2d)+2C^2 \ln m + 2c\tau_2^2D^2 {L^*}^2 d^2\sqrt{\frac{\mu_{2,j}}{\mu_{1,j}}}.
		\end{align}
		Plugging \eqref{non-smooth minimax zo bound} into \eqref{non-smooth minimax split1}, summing \eqref{non-smooth minimax split1} over $j = \lceil\frac{t}{2}\rceil,\cdots,t$, maximizing $\mathbf{x}$ and finally taking expectation over both sides, we have
		\begin{align}\label{non-smooth minimax split1*}
			&\mathbb{E} \left[ \max_{\mathbf{x} \in \mathcal{W} \times \Delta_m} \sum_{j=\lceil\frac{t}{2}\rceil}^{t} \eta_j \left\langle \hat{\mathbf{g}}^*(\mathbf{w}_j, \mathbf{q}_j), \mathbf{x}_j - \mathbf{x} \right\rangle \right] \leq\max_{\mathbf{x} \in \mathcal{W} \times \Delta_m}B\left(\mathbf{x}, \mathbf{x}_j\right) + \frac{\eta_j^2}{2} \mathbb{E}\left[\left\|\hat{\mathbf{g}}^*(\mathbf{w}_j, \mathbf{q}_j)\right\|_{*}^2\right] \nonumber \\
			\leq& 1 + \left[c\tau_2^2D^2 {L^*}^2 d \log (2d)+C^2 \ln m\right]\sum_{j=\lceil\frac{t}{2}\rceil}^{t} \eta_j^2 + c\tau_2^2D^2 {L^*}^2 d^2 \sum_{j=\lceil\frac{t}{2}\rceil}^{t} \sqrt{\frac{\mu_{2,j}}{\mu_{1,j}}}\eta_j^2.
		\end{align} 
		Next, we estimate the second term in the r.h.s of \eqref{non-smooth minimax split}. We also make use of the ``ghost iterate" technique proposed by \citep{nemirovski2009robust} to solve this problem. Detailedly, we create a virtual sequence $\{\mathbf{z}_t\}$ by  performing non-smooth ZO-SMD with $\mathbb{E}_{t-1}\left[\hat{\mathbf{g}}^*(\mathbf{w}_t, \mathbf{q}_t)\right] - \hat{\mathbf{g}}^*(\mathbf{w}_t, \mathbf{q}_t)$ as the gradient:
		\begin{equation}
			\mathbf{z}_{t+1} = \underset{\mathbf{x} \in \mathcal{W} \times \Delta_m}{\operatorname{argmin}} \left\{ \eta_t \left\langle\mathbb{E}_{t-1}\left[\hat{\mathbf{g}}^*(\mathbf{w}_t, \mathbf{q}_t)\right] - \hat{\mathbf{g}}^*(\mathbf{w}_t, \mathbf{q}_t), \mathbf{x} - \mathbf{z}_t \right\rangle + B(\mathbf{x}, \mathbf{z}_t) \right\},
		\end{equation}
		with $\mathbf{z}_1=\mathbf{x}_1$. Then we further decompose the error term as
		\begin{align}\label{non-smooth minimax split2}
			&\max_{\mathbf{x} \in \mathcal{W} \times \Delta_m} \sum_{j=\lceil\frac{t}{2}\rceil}^{t} \eta_j \left\langle \mathbb{E}_{j-1}[\hat{\mathbf{g}}^*(\mathbf{w}_j, \mathbf{q}_j)] - \hat{\mathbf{g}}^*(\mathbf{w}_j, \mathbf{q}_j), \mathbf{x}_j - \mathbf{x} \right\rangle \nonumber \\
			\leq& \max_{\mathbf{x} \in \mathcal{W} \times \Delta_m}\sum_{j=\lceil\frac{t}{2}\rceil}^{t} \eta_j \left\langle \mathbb{E}_{j-1}[\hat{\mathbf{g}}^*(\mathbf{w}_j, \mathbf{q}_j)] - \hat{\mathbf{g}}^*(\mathbf{w}_j, \mathbf{q}_j), \mathbf{z}_j - \mathbf{x} \right\rangle \nonumber \\
			&+\sum_{j=\lceil\frac{t}{2}\rceil}^{t} \eta_j \left\langle \mathbb{E}_{j-1}[\hat{\mathbf{g}}^*(\mathbf{w}_j, \mathbf{q}_j)] - \hat{\mathbf{g}}^*(\mathbf{w}_j, \mathbf{q}_j), \mathbf{x}_j - \mathbf{z}_j \right\rangle.
		\end{align}
	We estimate the first term in the r.h.s of \eqref{non-smooth minimax split2}. By the property of mirror descent, we have
		\begin{align}\label{non-smooth minimax split2.1}
			& \eta_j \left\langle \mathbb{E}_{j-1}[\hat{\mathbf{g}}^*(\mathbf{w}_j, \mathbf{q}_j)] - \hat{\mathbf{g}}^*(\mathbf{w}_j, \mathbf{q}_j), \mathbf{z}_j - \mathbf{x} \right\rangle \nonumber \\
			\leq& B\left(\mathbf{x}, \mathbf{z}_j\right) - B\left(\mathbf{x}, \mathbf{z}_{j+1}\right) + \frac{\eta_j^2}{2} \left\| \mathbb{E}_{j-1}[\hat{\mathbf{g}}^*(\mathbf{w}_j, \mathbf{q}_j)] - \hat{\mathbf{g}}^*(\mathbf{w}_j, \mathbf{q}_j) \right\|_*^2.
		\end{align}
		Since $\left(\mathbb{E}\left[X\right]\right)^2 \leq \mathbb{E}\left[X^2\right]$ and $\|\mathbb{E}\left[\cdot\right]\| \leq \mathbb{E}\left[\|\cdot\|\right]$, and by \eqref{non-smooth minimax zo bound}, we get
		\begin{align}
			&\mathbb{E}\left[\left\| \mathbb{E}_{j-1}[\hat{\mathbf{g}}^*(\mathbf{w}_j, \mathbf{q}_j)] - \hat{\mathbf{g}}^*(\mathbf{w}_j,\mathbf{ q}_j) \right\|_*^2\right] \leq \mathbb{E}\left[2\left\| \mathbb{E}_{j-1}\left[\hat{\mathbf{g}}^*(\mathbf{w}_j, \mathbf{q}_j)\right] \right\|_*^2 + 2\left\| \hat{\mathbf{g}}^*(\mathbf{w}_j, \mathbf{q}_j) \right\|_*^2\right] \nonumber \\
			\leq& 4\mathbb{E}\left[\left\| \hat{\mathbf{g}}^*(\mathbf{w}_j, \mathbf{q}_j) \right\|_*^2\right]\leq 8c\tau_2^2D^2 {L^*}^2 d \log (2d)+8C^2 \ln m + 8c\tau_2^2D^2 {L^*}^2 d^2\sqrt{\frac{\mu_{2,j}}{\mu_{1,j}}}.
		\end{align}
		Summing \eqref{non-smooth minimax split2.1} over $j = \lceil\frac{t}{2}\rceil,\cdots,t$, maximizing $\mathbf{x}$ and taking expectation over both sides, we have
		\begin{align}\label{non-smooth minimax split2.1*}
			&\mathbb{E}\left[\max_{\mathbf{x} \in \mathcal{W} \times \Delta_m}\sum_{j=\lceil\frac{t}{2}\rceil}^{t} \eta_j \langle \mathbb{E}_{j-1}[\hat{\mathbf{g}}^*(\mathbf{w}_j, \mathbf{q}_j)] - \hat{\mathbf{g}}^*(\mathbf{w}_j, \mathbf{q}_j), \mathbf{x}_j - \mathbf{x}\rangle\right] \nonumber \\
			\leq& \max_{\mathbf{x} \in \mathcal{W}} B\left(\mathbf{x}, \mathbf{z}_j\right) + \frac{\eta_j^2}{2} \mathbb{E}\left[\left\| \mathbb{E}_{j-1}[\hat{\mathbf{g}}^*(\mathbf{w}_j, \mathbf{q}_j)] - \hat{\mathbf{g}}^*(\mathbf{w}_j, \mathbf{q}_j) \right\|_*^2\right] \nonumber \\
			\leq& 1+ \left[4c\tau_2^2D^2 {L^*}^2 d \log (2d)+4C^2 \ln m\right]\sum_{j=\lceil\frac{t}{2}\rceil}^{t} \eta_j^2 + 4c\tau_2^2D^2 {L^*}^2 d^2 \sum_{j=\lceil\frac{t}{2}\rceil}^{t} \sqrt{\frac{\mu_{2,j}}{\mu_{1,j}}}\eta_j^2.
		\end{align}
		Then, we estimate the second term in the r.h.s of \eqref{non-smooth minimax split2.1}. We define
		\begin{equation}
			\delta_j^*=\eta_j \left\langle \mathbb{E}_{j-1}[\hat{\mathbf{g}}^*(\mathbf{w}_j, \mathbf{q}_j)] - \hat{\mathbf{g}}^*(\mathbf{w}_j, \mathbf{q}_j), \mathbf{x}_j - \mathbf{z}_j \right\rangle. \nonumber
		\end{equation}
		Since $\mathbf{x}_j$ and $\mathbf{z}_j$ are independent of the random samples used to construct $\hat{\mathbf{g}}^*(\mathbf{w}_j, \mathbf{q}_j)$, $\delta_{\lceil\frac{t}{2}\rceil}^*,\cdots,\delta_t^*$ forms a martingale difference sequence. Thus, we have.
		\begin{equation}\label{non-smooth minimax split2.2}
			\mathbb{E}\left[\sum_{j=\lceil\frac{t}{2}\rceil}^{t}\delta_j^*\right]=0.
		\end{equation}
		Taking expectation over \eqref{non-smooth minimax split2}, and plugging \eqref{non-smooth minimax split2.1*} and \eqref{non-smooth minimax split2.2} into \eqref{non-smooth minimax split2}, we obtain
		\begin{align}\label{non-smooth minimax split2*}
			& \mathbb{E}\left[ \max_{\mathbf{x} \in \mathcal{W} \times \Delta_m} \sum_{j=\lceil\frac{t}{2}\rceil}^{t} \eta_j \left\langle \mathbb{E}_{j-1}[\hat{\mathbf{g}}^*(\mathbf{w}_j, \mathbf{q}_j)] - \hat{\mathbf{g}}^*(\mathbf{w}_j, \mathbf{q}_j), \mathbf{x}_j - \mathbf{x}\right\rangle\right] \nonumber \\
			\leq& 1+ \left[4c\tau_2^2D^2 {L^*}^2 d \log (2d)+4C^2 \ln m\right]\sum_{j=\lceil\frac{t}{2}\rceil}^{t} \eta_j^2 + 4c\tau_2^2D^2 {L^*}^2 d^2 \sum_{j=\lceil\frac{t}{2}\rceil}^{t} \sqrt{\frac{\mu_{2,j}}{\mu_{1,j}}}\eta_j^2.
		\end{align}
		Finally, we estimate the third term in the r.h.s of \eqref{non-smooth minimax split}. We proceed to decompose the optimization error as
		\begin{align}\label{non-smooth minimax split3}
			&\eta_j \left\langle F^*(\mathbf{w}_j, \mathbf{q}_j) - \mathbb{E}_{j-1} [\hat{\mathbf{g}}^*(\mathbf{w}_j, \mathbf{q}_j)], \mathbf{x}_j - \mathbf{x} \right\rangle \nonumber \\
			=& \eta_j \left\langle \left[ \sum_{i=1}^{m} q_{j,i} \left[\nabla {R_i}_{\mu_1}(\mathbf{w}_j) - \mathbb{E}_{j-1}\left[\hat{\nabla}_{\text{ns}} \ell(\mathbf{w}_j; \mathcal{P}_i)\right] \right], -\left[ R_1(\bar{\mathbf{w}}_j^{(1)}) - R_1^*, \ldots, R_m(\bar{\mathbf{w}}_j^{(m)}) - R_m^* \right]^\top \right], \mathbf{x}_j - \mathbf{x}\right\rangle \nonumber \\
			=&\eta_j \left\langle \sum_{i=1}^{m} q_{j,i} \left[\nabla {R_i}_{\mu_1}(\mathbf{w}_j) -\mathbb{E}_{j-1} \left[\hat{\nabla}_{\text{ns}} \ell(\mathbf{w}_j; \mathcal{P}_i) \right]\right], \mathbf{w}_j - \mathbf{w} \right\rangle\nonumber \\
			&- \eta_j \left\langle \left[ R_1(\bar{\mathbf{w}}_j^{(1)}) - R_1^*, \ldots, R_m(\bar{\mathbf{w}}_j^{(m)}) - R_m^* \right]^\top, \mathbf{q}_j - \mathbf{q} \right\rangle.
		\end{align}
		Summing the first term in the r.h.s of \eqref{non-smooth minimax split3} over $j = \lceil\frac{t}{2}\rceil,\cdots,t$, maximizing $\mathbf{w}$ and taking expectation over both sides, by \eqref{non-smooth min split2*} we have
		\begin{align}\label{non-smooth minimax split3.1}
			&\mathbb{E} \left[\max_{\mathbf{w} \in \mathcal{W}} \sum_{j=\lceil\frac{t}{2}\rceil}^{t} \eta_j \left\langle \sum_{i=1}^{m} q_{j,i} \left[ \nabla {R_i}_{\mu_1}(\mathbf{w}_j) - \mathbb{E}_{j-1} \left[\hat{\nabla}_{\text{ns}} \ell(\mathbf{w}_j; \mathcal{P}_i) \right]\right], \mathbf{w}_j - \mathbf{w} \right\rangle \right] \nonumber \\
			=& \sum_{i=1}^{m} \sum_{j=\lceil\frac{t}{2}\rceil}^{t} q_{j,i}\frac{\mu_{2,j}}{\mu_{1,j}} L^*\tau_2^2\left\|v\left(\mu_{1,j}, \mu_{2,j}, \mathbf{w}_j\right)\right\|_2\max_{\mathbf{w} \in \mathcal{W}}\sqrt{2B_w\left(\mathbf{w}_j,\mathbf{w}\right)} \nonumber \\
			\leq& \sum_{i=1}^{m}  \left[ \frac{\sqrt{6}}{2}L^*Dd^{3/2}\tau_2^2 \sum_{j=\lceil\frac{t}{2}\rceil}^{t}q_{j,i} \frac{\mu_{2,j}}{\mu_{1,j}}\eta_j \right] \nonumber \\
			=& \frac{\sqrt{6}}{2}L^*Dd^{3/2}\tau_2^2 \sum_{j=\lceil\frac{t}{2}\rceil}^{t} \frac{\mu_{2,j}}{\mu_{1,j}}\eta_j.
		\end{align}
		Next, we estimate the second term in the r.h.s of \eqref{non-smooth minimax split3.1}. Recall the result in Theorem \ref{thm:6} , we deduce the common bound of $\mathbb{E}\left[R_i(\bar{\mathbf{w}}_j^{(i)}) - R_i^*\right]$ for all $i \sim [m]$. Thus, we have
		\begin{align}
			&\mathbb{E} \left[\max_{i \in [m]} \left\{ R_i(\bar{\mathbf{w}}_j^{(i)}) - R_i^* \right\}\right]  \nonumber\\
			\leq& \frac{2\sqrt{2}L^*\left[\tau_2D^2d+c\tau_2\sqrt{d}+c\tau_2\log(2d)\ln 3+\sqrt{3}\tau_2^2D\sqrt{d}+\sqrt{6}\sqrt{d}\right]}{\sqrt{j+2}}. 	\nonumber 		
		\end{align}
		By the definition of the dual norm, we obtain
		\begin{align}\label{non-smooth minimax split3.2}
			&\mathbb{E} \left[\max_{\mathbf{q} \in \Delta_m} \sum_{j=\lceil\frac{t}{2}\rceil}^{t} \eta_j \left\langle \left[ R_1(\bar{\mathbf{w}}_j^{(1)}) - R_1^*, \cdots, R_m(\bar{\mathbf{w}}_j^{(m)}) - R_m^* \right]^\top, \mathbf{q}_j - \mathbf{q} \right\rangle\right] \nonumber \\
			\leq& \sum_{j=\lceil\frac{t}{2}\rceil}^{t} \eta_j\mathbb{E} \left[\left\|\left[ R_1(\bar{\mathbf{w}}_j^{(1)}) - R_1^*, \cdots, R_m(\bar{\mathbf{w}}_j^{(m)}) - R_m^* \right]\right\|_{\infty}\right]\max_{\mathbf{q} \in \Delta_m}\|\mathbf{q}_j - \mathbf{q}\|_{1}\nonumber\\
			\leq& \sum_{j=\lceil\frac{t}{2}\rceil}^{t} 2\eta_j \mathbb{E} \left[\max_{i \in [m]} \left\{ R_i(\bar{\mathbf{w}}_j^{(i)}) - R_i^* \right\}\right] \nonumber\\
			\leq& \sum_{j=\lceil\frac{t}{2}\rceil}^{t}\frac{4\sqrt{2}L^*\left[\tau_2D^2d+c\tau_2\sqrt{d}+c\tau_2\log(2d)\ln 3+\sqrt{3}\tau_2^2D\sqrt{d}+\sqrt{6}\sqrt{d}\right]}{\sqrt{j+2}}\eta_j.
		\end{align}
		Then combining \eqref{non-smooth minimax split3.1} and \eqref{non-smooth minimax split3.2}, we get
		\begin{align}\label{non-smooth minimax split3*}
			&\mathbb{E}\left[ \max_{\mathbf{x} \in \mathcal{W} \times \Delta_m} \sum_{j=\lceil\frac{t}{2}\rceil}^{t} \eta_j \left\langle F^*(\mathbf{w}_j, \mathbf{q}_j) - \mathbb{E}_{t-1}[\hat{\mathbf{g}}^*(\mathbf{w}_j, \mathbf{q}_j)], \mathbf{x}_j - \mathbf{x} \right\rangle \right]
			\nonumber \\
			\leq& \frac{\sqrt{6}}{2}L^*Dd^{3/2}\tau_2^2 \sum_{j=\lceil\frac{t}{2}\rceil}^{t} \frac{\mu_{2,j}}{\mu_{1,j}}\eta_j \nonumber \\
			&+\sum_{j=\lceil\frac{t}{2}\rceil}^{t}\frac{4\sqrt{2}L^*\left[\tau_2D^2d+c\tau_2\sqrt{d}+c\tau_2\log(2d)\ln 3+\sqrt{3}\tau_2^2D\sqrt{d}+\sqrt{6}\sqrt{d}\right]}{\sqrt{j+2}}\eta_j. 
		\end{align}
		Finally, taking expectation over \eqref{non-smooth minimax split} and plugging \eqref{non-smooth minimax split1*}, \eqref{non-smooth minimax split2*} and \eqref{non-smooth minimax split3*} into \eqref{non-smooth minimax split}, we have
		\begin{align}\label{non-smooth minimax simplify}
			& \mathbb{E}\left[ \max_{\mathbf{q} \in \Delta_m} \phi(\bar{\mathbf{w}}_t, \mathbf{q}) - \min_{\mathbf{w} \in \mathcal{W}} \phi(\mathbf{w}, \bar{\mathbf{q}}_t) \right] \nonumber \\
			\leq& \left( \sum_{j=\lceil\frac{t}{2}\rceil}^{t} \eta_j \right)^{-1} \left\{ 2 + \left[5c\tau_2^2D^2 {L^*}^2 d \log (2d)+5C^2 \ln m\right]\sum_{j=\lceil\frac{t}{2}\rceil}^{t} \eta_j^2 + 5c\tau_2^2D^2 {L^*}^2 d^2 \sum_{j=\lceil\frac{t}{2}\rceil}^{t} \sqrt{\frac{\mu_{2,j}}{\mu_{1,j}}}\eta_j^2 \right.\nonumber\\
			&+\frac{\sqrt{6}}{2}L^*Dd^{3/2}\tau_2^2 \sum_{j=\lceil\frac{t}{2}\rceil}^{t} \frac{\mu_{2,j}}{\mu_{1,j}}\eta_j+\sqrt{3d}L^*\sum_{j=\lceil\frac{t}{2}\rceil}^{t} \mu_{1,j}\eta_j \nonumber \\
			&\left. +\sum_{j=\lceil\frac{t}{2}\rceil}^{t}\frac{4\sqrt{2}L^*\left[\tau_2D^2d+c\tau_2\sqrt{d}+c\tau_2\log(2d)\ln 3+\sqrt{3}\tau_2^2D\sqrt{d}+\sqrt{6}\sqrt{d}\right]}{\sqrt{j+2}}\eta_j\right\}.
		\end{align}
		By setting the step size $\eta_j=\frac{1}{\sqrt{2}\tau_1\tau_2L^*d\sqrt{j+1}}$ and recall that we set $\mu_{1,j}=\frac{1}{j+1}$ and $\mu_{2,j}=\frac{1}{d(j+1)^2}$ in Theorem \ref{thm:6}, we can transform \eqref{non-smooth minimax simplify} into
		\begin{align}\label{non-smooth minimax simplify*}
			& \mathbb{E}\left[ \max_{\mathbf{q} \in \Delta_m} \phi(\bar{\mathbf{w}}_t, \mathbf{q}) - \min_{\mathbf{w} \in \mathcal{W}} \phi(\mathbf{w}, \bar{\mathbf{q}}_t) \right] \nonumber \\
			\leq& \left( \sum_{j=\lceil\frac{t}{2}\rceil}^{t} \frac{1}{\sqrt{2}\tau_1\tau_2L^*d\sqrt{j+1}} \right)^{-1}
			\left\{ 2 + \frac{5c\tau_2^2D^2 {L^*}^2 d \log (2d)+5C^2 \ln m}{2\tau_1^2\tau_2^2{L^*}^2d^2} \sum_{j=\lceil\frac{t}{2}\rceil}^{t} \frac{1}{ j+1}\right.\nonumber \\ 
			&+ \frac{5cD^2}{2\tau_1^2\sqrt{d}}\sum_{j=\lceil\frac{t}{2}\rceil}^{t}\frac{1}{(j+1)^{3/2}}+\frac{\sqrt{3}\tau_2D}{2\tau_1\sqrt{d}} \sum_{j=\lceil\frac{t}{2}\rceil}^{t} \frac{1}{(j+1)^{3/2}}+\frac{\sqrt{3}}{\sqrt{2}\tau_1\tau_2\sqrt{d}}\sum_{j=\lceil\frac{t}{2}\rceil}^{t}\frac{1}{(j+1)^{3/2}}\nonumber \\
			&\left.+\frac{4\left[\tau_2D^2d+c\tau_2\sqrt{d}+c\tau_2\log(2d)\ln 3+\sqrt{3}\tau_2^2D\sqrt{d}+\sqrt{6}\sqrt{d}\right]}{\tau_1\tau_2d}\sum_{j=\lceil\frac{t}{2}\rceil}^{t}\frac{1}{j+1}\right\}.
		\end{align}
		Plugging \eqref{inequality1} and \eqref{inequality2} into \eqref{non-smooth minimax simplify*}, we have
		\begin{align}
			&\mathbb{E}\left[ \max_{\mathbf{q} \in \Delta_m} \phi(\bar{\mathbf{w}}_t, \mathbf{q}) - \min_{\mathbf{w} \in \mathcal{W}} \phi(\mathbf{w}, \bar{\mathbf{q}}_t) \right] \nonumber \\
			\leq& \frac{2\sqrt{2}L^*}{\sqrt{t + 2}}\Bigg\{4\tau_1\tau_2d+\frac{5c\tau_2D^2\sqrt{d}}{\tau_1}+\sqrt{3}\tau_2^2D\sqrt{d}+\frac{5c\tau_2^2D^2 {L^*}^2 d \log (2d)+5C^2 \ln m}{\tau_1\tau_2{L^*}^2d}\ln3  \nonumber\\
			&+8\ln3\left[\tau_2D^2d+c\tau_2\sqrt{d}+c\tau_2\log(2d)\ln 3+\sqrt{3}\tau_2^2D\sqrt{d}+\sqrt{6}\sqrt{d}\right] \Bigg\} \nonumber \\
			=& \mathcal{O}\left( \frac{1}{\sqrt{t}} \right),
		\end{align}
		which completes the proof.
	\end{proof}
	Theorem \ref{thm:7} demonstrates that ZO-SMD converges at the order of $\mathcal{O}\left( 1/\sqrt{t}\right)$ for solving non-smooth MERO, which improves the convergence rate of general SMD by a $\log t$ factor~\citep{zhang2023efficient}. Combining with Theorem \ref{thm:6}, we can obtain the total complexity is bounded by $\mathcal{O}\left( 1/\sqrt{t}\right) \times \mathcal{O}\left( 1/\sqrt{t}\right) =\mathcal{O}\left( 1/t\right)$, which reaches the optimal convergence rate of zeroth-order algorithms for non-smooth stochastic saddle-point problems proved in \citep{dvinskikh2022noisy}.
	
	\section{Conclusions and Future Work}
	\noindent This paper proposes a zeroth-order stochastic mirror descent (ZO-SMD) algorithm for solving MERO under both smooth and non-smooth cases. Detailedly, ZO-SMD is based on SMD by substituting smooth zeroth-order gradient of loss function for the original stochasic gradient, which solves gradient-unknown cases and attains optimal convergence rates of $\mathcal{O}\left(1/\sqrt{t}\right)$ on the estimate of $R_i^*$ and $\mathcal{O}\left(1/\sqrt{t}\right)$ on the optimization error of $\bar{\mathbf{w}}_t$ and $\bar{\mathbf{q}}_t$. Moreover, we inrtoduce another smoothing parameter into the construction of non-smooth zeroth-order gradient to solve non-smooth MERO, and finally attain the same convergence rate on the estimate of $R_i^*$ and the optimization error of $\bar{\mathbf{w}}_t$ and $\bar{\mathbf{q}}_t$ with smooth ZO-SMD. To the best of our knowledge, it is the first zeroth-order algorithm with complexity gurantee for solving both smooth and non-smooth MERO problems.
	
	Moreover, although there are also some algorithms fitting on solving MERO under nonconvex case \citep{xu2021derivative,shen2023zeroth,xu2023zeroth}, when the estimate of $R_i^*$ is nearly optimal. It is worthwhile to further study MERO in the future how to estimate the value of $R_i^*$ in a non-convex setting, that is, the loss function is non-convex with respect to $\mathbf{w}$.
%
	
	\vskip 0.2in
	\bibliography{JMLR}
	
\end{document}